\newtheorem{theorem}{Theorem}[section]
\newtheorem{proposition}[theorem]{Proposition}
\newtheorem{lemma}[theorem]{Lemma}
\theoremstyle{remark}
\newtheorem{remark}[theorem]{Remark}
\newcommand{\Cl}{\textup{Cl}}
\newcommand{\spn}{\textup{span}}
 \renewcommand{\AA}{\mathbb{A}}
 \newcommand{\RR}{\mathbb{R}}
\newcommand{\E}{\mathcal{E}}
\newcommand{\I}{\mathcal{I}}
\newcommand{\J}{\mathcal{J}}
\renewcommand{\P}{\mathcal{P}}
\newcommand{\R}{\mathcal{R}}
\renewcommand{\a}{\mathfrak{a}}
\renewcommand{\b}{\mathfrak{b}}
\renewcommand{\c}{\mathfrak{c}}
\renewcommand{\k}{\mathfrak{k}}
\newcommand{\m}{\mathfrak{m}}
\newcommand{\p}{\mathfrak{p}}
\newcommand{\q}{\mathfrak{q}}
\begin{document}

\title{Phase transitions on C*-algebras from actions of congruence monoids on rings of algebraic integers}
\author{Chris Bruce}
\address[Chris Bruce]{
	Department of Mathematics and Statistics\\
	University of Victoria\\
	Victoria, BC V8W 2Y2\\
	Canada}%
\email[Bruce]{cmbruce@uvic.ca}

\subjclass[2010]{Primary 46L05; Secondary 11R04.} 
\thanks{Research supported by the Natural Sciences and Engineering Research Council of Canada through an Alexander Graham Bell CGS-D award.\\ This work was done as part of the author's PhD project at the University of Victoria.}

\maketitle

\setlength{\parindent}{0cm} \setlength{\parskip}{0.3cm}

\begin{abstract}
We compute the KMS (equilibrium) states for the canonical time evolution on C*-algebras from actions of congruence monoids on rings of algebraic integers. We show that for each $\beta\in[1,2]$, there is a unique KMS$_\beta$ state, and we prove that it is a factor state of type III$_1$. There are phase transitions at $\beta=2$ and $\beta=\infty$ involving a quotient of a ray class group. Our computation of KMS and ground states generalizes the results of Cuntz, Deninger, and Laca for the full $ax+b$-semigroup over a ring of integers, and our type classification generalizes a result of Laca and Neshveyev in the case of the rational numbers and a result of Neshveyev in the case of arbitrary number fields. 
\end{abstract}

\section{Introduction}

Given a number field $K$ with ring of integers $R$, Cuntz, Deninger, and Laca studied phase transitions for the canonical time evolution on the left regular C*-algebra $C_\lambda^*(R\rtimes R^\times)$ of the (full) $ax+b$-semigroup $R\rtimes R^\times$ over $R$, see \cite{CDL:2013}. 
Their results built on work of Laca and Raeburn in \cite{LR:2010} and Laca and Neshveyev in \cite{LN:2011} on the similar semigroup $\mathbb{N}\rtimes\mathbb{N}^\times$ associated to the number field $\mathbb{Q}$. The classification of KMS and ground states from \cite{CDL:2013} showed that the C*-dynamical system associated with $C_\lambda^*(R\rtimes R^\times)$ exhibits several interesting properties; for example, the parameterization spaces for both the ground states and the low temperature KMS states decomposes over the ideal class group $\Cl(K)$ of $K$, and uniqueness for the high temperature KMS states is related to the distribution of ideals over the group $\Cl(K)$. In \cite{Nesh:2013}, Neshveyev developed general results for computing KMS states on C*-algebras of non-principal groupoids, and gave an alternative computation of the KMS states on $C_\lambda^*(R\rtimes R^\times)$. 

The construction from \cite{CDL:2013} was recently generalized in \cite{Bru:2019} by restricting the multiplicative part of $R\rtimes R^\times$ to lie in certain subsemigroups of $R^\times$. Specifically, given a modulus $\m=\m_\infty\m_0$ for $K$ and a group $\Gamma$ of residues modulo $\m$, one considers the left regular C*-algebra $C_\lambda^*(R\rtimes R_{\m,\Gamma})$ of the semi-direct product $R\rtimes R_{\m,\Gamma}$ where $R_{\m,\Gamma}\subseteq R^\times$ is the congruence monoid consisting of algebraic integers in $R^\times$ that reduce to an element of $\Gamma$ modulo $\m$. For each number field $K$, the construction produces infinitely many non-isomorphic C*-algebras as $\m$ and $\Gamma$ vary.
For the special case of trivial $\m$, in which case $\Gamma$ must also be trivial, one gets the full $ax+b$-semigroup C*-algebra studied in \cite{CDL:2013}. And for the special case where $\m_\infty$ is supported at all real embeddings of $K$ and $\m_0$ is trivial, one gets the semigroup $R\rtimes R_+^\times$ where $R_+^\times$ is the subsemigroup of $R^\times$ consisting of (non-zero) totally positive algebraic integers.

The main result of this paper is the computation of all KMS and ground states on the left regular C*-algebra $C_\lambda^*(R\rtimes R_{\m,\Gamma})$ for the canonical time evolution $\sigma$ coming from the norm map on $K$, including a classification of type for the high temperature KMS states, see Theorem~\ref{thm:phasetransitions} for the precise statement. 
As a consequence, we obtain that the boundary quotient of $C_\lambda^*(R\rtimes R_{\m,\Gamma})$ admits a unique KMS state, which is of type III$_1$. Moreover, the techniques needed to prove Theorem~\ref{thm:phasetransitions} also lead to a computation of all the KMS and ground states on the left regular C*-algebras of the monoids $R_{\m,\Gamma}$ and $R_{\m,\Gamma}/R_{\m,\Gamma}^*$ where $R_{\m,\Gamma}^*:=R_{\m,\Gamma}^*\cap R^*$ is the group of units in $R_{\m,\Gamma}$.

In order to explain our main result, we must first discuss a few number-theoretic preliminaries, see Section~\ref{subsec:ANT} for more details.
Let $\I_\m$ denote the group of fractional ideals in $K$ that are coprime to the modulus $\m$, and let $K_{\m,\Gamma}=R_{\m,\Gamma}^{-1}R_{\m,\Gamma}\subseteq K^\times$ be the group of (left) quotients of $R_{\m,\Gamma}$. 
For each $x\in K_{\m,\Gamma}$, let $i(x):=xR$ be the principal fractional ideal in $K$ generated by $x$, so that $i(K_{\m,\Gamma})$ is a subgroup of $\I_\m$. The quotient group $\I_\m/i(K_{\m,\Gamma})$ will appear throughout this paper. In the case that $\m$ and $\Gamma$ are trivial, $\I_\m/i(K_{\m,\Gamma})$ coincides with the ideal class group $\Cl(K)$. In general, $\I_\m/i(K_{\m,\Gamma})$ is a quotient of the ray class group modulo $\m$ and thus is always a finite group.

We show that the C*-dynamical system $(C_\lambda^*(R\rtimes R_{\m,\Gamma}),\mathbb{R},\sigma)$ exhibits phase transitions at $\beta=2$ and $\beta=\infty$. For each $\beta\in[1,2]$, we prove that there is a unique $\sigma$-KMS$_\beta$ state on $C_\lambda^*(R\rtimes R_{\m,\Gamma})$. Our proof of uniqueness for $\beta\in[1,2]$ uses the well-known fact that the $L$-functions associated with non-trivial characters of $\I_\m/i(K_{\m,\Gamma})$ do not have poles at $1$. This technique is inspired by the proofs of uniqueness for high temperature KMS states on Bost--Connes type systems, see, for example, \cite[Section~7]{BC:1995}, \cite[Proposition]{Nesh:2002}, and \cite[Theorem 2.1(ii)]{LLN:2009}. 
To maneuver ourselves into a position where we can use these methods, we expand on an idea from \cite{Nesh:2013}. In the special case of trivial $\m$ and $\Gamma$, when our uniqueness result coincides with that in \cite[Theorem~6.7]{CDL:2013}, our approach is close to that taken in \cite[Section~3]{Nesh:2013} and is rather different than that taken in \cite{CDL:2013}.
We then prove that for each $\beta\in[1,2]$, the (unique) KMS$_\beta$ state $\phi_\beta$ on $C_\lambda^*(R\rtimes R_{\m,\Gamma})$ is a factor state of type III$_1$. Indeed, we prove that the von Neumann algebra generated by the GNS representation of $\phi_\beta$ is isomorphic to the injective factor of type III$_1$ with separable predual. This builds on \cite{LN:2011} and also generalizes the result asserted in \cite[Section~3]{Nesh:2013} on type for the high temperature KMS states on the left regular C*-algebra $C_\lambda^*(R\rtimes R^\times)$ of the full $ax+b$-semigroup, see Remark~\ref{rmk:typeIII1} below for more on this.
Our computation of the type uses ideas from \cite{LN:2011} and \cite{LagNesh:2014}; it relies on a general version of the prime ideal theorem for classes in $\I_\m/i(K_{\m,\Gamma})$. 

We now discuss the case where $\beta\in(2,\infty]$. For each class $\k\in\I_\m/i(K_{\m,\Gamma})$, choose an integral ideal $\a_\k\in\k$ representing $\k$. The group $R_{\m,\Gamma}^*$ of units of $R_{\m,\Gamma}$ acts on $\a_\k$ by multiplication, so we may form the semi-direct product $\a_\k\rtimes R_{\m,\Gamma}^*$. For each $\beta\in (2,\infty]$, we prove that the set of KMS$_\beta$ states decomposes over the finite set $\I_\m/i(K_{\m,\Gamma})$; specifically, the extremal KMS$_\beta$ states are parameterized by pairs $(\k,\tau)$ where $\k$ is a class in $\I_\m/i(K_{\m,\Gamma})$ and $\tau$ is an extremal tracial state on the group C*-algebra $C^*(\a_\k\rtimes R_{\m,\Gamma}^*)$. 
Moreover, the parameter space for the ground states also decomposes over $\I_\m/i(K_{\m,\Gamma})$, but the extreme points are given by pairs $(\k,\phi)$ where $\k\in \I_\m/i(K_{\m,\Gamma})$ and $\phi$ is an extremal state on a matrix algebra over $C^*(\a_\k\rtimes R_{\m,\Gamma}^*)$, so there are usually ground states that are not KMS$_\infty$ states. For the special case of trivial $\m$ and $\Gamma$, we recover the main parameterization results from \cite[Sections~7\&8]{CDL:2013}. Our computation uses general results for KMS and ground states on groupoid C*-algebras from \cite{Nesh:2013} and \cite{LLN:2019}, respectively.

The boundary quotient of $C_\lambda^*(R\rtimes R_{\m,\Gamma})$ also carries a canonical time evolution. We prove that the associated C*-dynamical system admits a unique KMS state, which is of type III$_1$ and has inverse temperature $\beta=1$.
In the special case where $\m$ and $\Gamma$ are trivial, the boundary quotient coincides with the ring C*-algebra of $R$ and we recover the known uniqueness result in that case, see  \cite{Cun:2008} for the case $K=\mathbb{Q}$ and \cite[Theorem~6.7]{CDL:2013} for the case of a general number field. 

The techniques and results used to prove Theorem~\ref{thm:phasetransitions} also lead to a phase transition theorem for the canonical time evolution on the left regular C*-algebra $C_\lambda^*(R_{\m,\Gamma})$ of a congruence monoid and also the left regular C*-algebra $C_\lambda^*(R_{\m,\Gamma}/R_{\m,\Gamma}^*)$ of the semigroup $R_{\m,\Gamma}/R_{\m,\Gamma}^*$ of principal integral ideals of $R$ that are generated by an element of $R_{\m,\Gamma}$. These simpler C*-dynamical systems also exhibit several phase transitions, and the group $\I_\m/i(K_{\m,\Gamma})$ also appears in this context. 
Additionally, there is a phase transition at $\beta=0$; the reason for this is that the spectrum of the diagonal in the case of the multiplicative monoids contains a unique fixed point, whereas in the case of $R\rtimes R_{\m,\Gamma}$, there are no fixed points.
This generalizes and expounds the result from \cite[Remark~7.5]{CDL:2013} (see also \cite[Remark~6.6.5]{CELY:2017}).

This paper is organized as follows. 
Section~\ref{sec:prelims} contains preliminaries: in Section~\ref{subsec:ANT}, we recall some well-known concepts from algebraic number theory, including that of moduli and ray class groups, and we fix some notation that will be used throughout this article; in Section~\ref{subsec:construction}, we review the necessary background on congruence monoids and C*-algebras from actions of congruence monoids on rings of algebraic integers from \cite{Bru:2019}.
In Section~\ref{sec:equilibrium}, we first introduce a canonical time evolution $\sigma$ on $C_\lambda^*(R\rtimes R_{\m,\Gamma})$, and state our main theorem on phase transitions; this result gives a parameterization of all KMS and ground states of the C*-dynamical system $(C_\lambda^*(R\rtimes R_{\m,\Gamma}),\mathbb{R},\sigma)$, including the type for all high temperature KMS states, see Theorem~\ref{thm:phasetransitions}. Sections \ref{subsec:prep} through \ref{subsec:ground} contain the proof of the parameterization results in Theorem~\ref{thm:phasetransitions}. The claim about type is proven in Section~\ref{sec:typeIII1}, see Theorem~\ref{thm:typeIII1}.
In Section~\ref{sec:boundaryquotient}, we use Theorem~\ref{thm:phasetransitions} to compute the KMS and ground states on the boundary quotient of $C_\lambda^*(R\rtimes R_{\m,\Gamma})$, see Theorem~\ref{thm:bqKMS}. 
Section~\ref{sec:multiplicativemonoid} contains our phase transition theorems for the left regular C*-algebras $C_\lambda^*(R_{\m,\Gamma})$ and $C_\lambda^*(R_{\m,\Gamma}/R_{\m,\Gamma}^*)$.

\subsection*{Acknowledgments.} 
I am grateful to my PhD supervisor, Marcelo Laca, for helpful discussions and for providing feedback on the content and style of this article. Some of the research for Section~\ref{sec:typeIII1} was done during the 2016 trimester program ``Von Neumann algebras'' at the Hausdorff Institute for Mathematics (HIM) in Bonn, and I would like to thank HIM and the organizers of this event for their hospitality and support. I would like to thank Brent Nelson for several helpful discussions on type III von Neumann algebras.

I am also indebted to one of the anonymous referees for their careful reading of the initial version of this article, for pointing out a gap in the original proof of Theorem~\ref{thm:phasetransitions}(iii) and an error in the original statement of Theorem~\ref{thm:ptmultiplicative}(iii), and for numerous helpful comments/suggestions which led to many improvements. In particular, the suggestion that Lemma~\ref{lem:general} be used in Section~\ref{sec:typeIII1} led to a streamlined proof of Proposition~\ref{prop:kmgergodic} and other suggestions led to a nicer proof of Theorem~\ref{thm:bqKMS}.

\section{Preliminaries}\label{sec:prelims}

\subsection{Moduli for number fields and ray class groups}\label{subsec:ANT}
Let $K$ be a number field with ring of integers $R$. Let $\P_K$ denote the set of non-zero prime ideals of $R$, so that each fractional ideal $\a$ of $K$ can be written as $\a=\prod_{\p\in\P_K}\p^{v_\p(\a)}$ where $v_\p(\a)\in\mathbb{Z}$ and $v_\p(\a)=0$ for all but finitely many $\p$. For $x\in K^\times:=K\setminus\{0\}$, the set $xR$ is the principal fractional ideal of $K$ generated by $x$, and write $v_\p(x)$ instead of $v_\p(xR)$.

Let $V_{K,\mathbb{R}}$ be the (finite) set of real embeddings $K\hookrightarrow\mathbb{R}$. A \emph{modulus} for $K$ is a function $\m:V_{K,\mathbb{R}}\sqcup\P_K\to \mathbb{N}$ such that 
\begin{itemize}
\item $\m_\infty:=\m\vert_{V_{K,\mathbb{R}}}$ takes values in $\{0,1\}$;
\item $\m\vert_{\P_K}$ is finitely supported, that is, $\m(\p)=0$ for all but finitely many $\p\in\P_K$.
\end{itemize}
Let $\m_0$ be the ideal of $R$ defined by $\m_0:=\prod_{\p\in\P_K}\p^{\m(\p)}$. It is conventional to write $\m$ as a formal product $\m=\m_\infty\m_0$, and to write $w\mid\m_\infty$ when $w\in V_{K,\mathbb{R}}$ is such that $\m(w)=1$. For background on moduli for number fields, we refer the reader to \cite[Chapter~V,~Section~1]{Mil:2013}. 

The \emph{multiplicative group of residues modulo $\m$} is 
\[
(R/\m)^*:=\left(\prod_{w\mid\m_\infty}\{\pm 1\}\right)\times (R/\m_0)^*
\]
where $ (R/\m_0)^*$ denotes the multiplicative group of units of the ring $R/\m_0$. We let $R^\times:=R\setminus\{0\}$ be the multiplicative semigroup of non-zero algebraic integers in $K$, and we let 
\[
R_\m:=\{x\in R^\times : v_\p(x)=0 \text{ for all } \p \text{ with } \p\mid\m_0\}
\] 
be the multiplicative semigroup of (non-zero) algebraic integers that are coprime to $\m_0$.
For $a\in R_\m$, let $[a]_\m$ denote the \emph{residue of $a$ modulo $\m$}
\[
[a]_\m:=((\textup{sign}(w(a)))_{w\mid\m_\infty},a+\m_0)\in (R/\m)^*
\]
where $\textup{sign}(w(a)):=w(a)/|w(a)|$. 

The map $a\mapsto [a]_\m$ extends uniquely to a surjective group homomorphism from the group of quotients $R_\m^{-1}R_\m$ of $R_\m$ onto $(R/\m)^*$, see \cite[Lemma~2.1]{Bru:2019}. By \cite[Lemma~2.2]{Bru:2019},  $R_\m^{-1}R_\m$ coincides with the group $K_\m:=\{x\in K^\times : v_\p(x)=0 \text{ for all } \p \text{ with } \p\mid\m_0\}$. 

 The \emph{ray modulo $\m$} is the kernel of the map $a\mapsto [a]_\m$, $K_\m\to (R/\m)^*$; it is denoted by $K_{\m,1}$.
 Let $\I_\m$ denote the group of fractional ideals of $K$ that are coprime to $\m_0$, and for $x\in K^\times$, we let $i(x):=xR$ denote the fractional ideal of $K$ generated by $x$; the \emph{ray class group modulo $\m$} is $\Cl_\m(K):=\I_\m/i(K_{\m,1})$. If $\m$ is trivial, that is, if $\m_\infty\equiv 0$ and $\m_0=R$, then $\I_\m$ equals the group $\I$ of all fractional ideals of $K$, and the ray modulo $\m$ is simply the multiplicative group $K^\times$ of non-zero elements in $K$. In this case, the ray class group modulo $\m$ coincides with the \emph{ideal class group} $\Cl(K)=\I/i(K^\times)$ of $K$.
 The canonical homomorphism is $\Cl_\m(K)\to\Cl(K)$ is surjective, and the ray class group $\Cl_\m(K)$ is always finite, see \cite[Chapter~V,~Theorem~1.7]{Mil:2013} for more on the relationship between $\Cl_\m(K)$ and $\Cl(K)$. 
 We mention in passing that ray class groups play an important role in the ideal-theoretic formulation of class field theory.

\subsection{C*-algebras from actions of congruence monoids on rings of integers}\label{subsec:construction}

We now recall the construction from \cite{Bru:2019}. Let $K$ be a number field with ring of integers $R$, and let $\m$ be a modulus for $K$. For a subgroup $\Gamma\subseteq (R/\m)^*$, let
\[
R_{\m,\Gamma}:=\{a\in R_\m : [a]_\m\in\Gamma\}.
\]
Then $R_{\m,\Gamma}$ is a multiplicative subsemigroup of $R^\times$; such semigroups are called \emph{congruence monoids} in the literature on semigroups, see, for example, \cite{GHK:2004} or \cite{GHK:2006}. 
By \cite[Proposition~3.1]{Bru:2019}, the group of quotients $R_{\m,\Gamma}^{-1}R_{\m,\Gamma}$ coincides with 
\[
K_{\m,\Gamma}:=\{x\in K_\m : [x]_\m\in\Gamma\}.
\]
The group $i(K_{\m,\Gamma})$ of principal fractional ideals generated by elements of $K_{\m,\Gamma}$ has finite index in $\I_\m$; indeed, the quotient $\I_\m/i(K_{\m,\Gamma})$ can be canonically identified with the quotient $\Cl_\m(K)/\bar{\Gamma}$ where $\bar{\Gamma}:=i(K_{\m,\Gamma})/i(K_{\m,1})$.

The semigroup $R_{\m,\Gamma}$ acts on (the additive group of) $R$ by multiplication, so we may form the semi-direct product semigroup $R\rtimes R_{\m,\Gamma}$. For each $(b,a)\in R\rtimes R_{\m,\Gamma}$, let $\lambda_{(b,a)}$ be the isometry in $\mathcal{B}(\ell^2(R\rtimes R_{\m,\Gamma}))$ determined by $\lambda_{(b,a)}(\varepsilon_{(y,x)})=\varepsilon_{(b+ay,ax)}$ where $\{\varepsilon_{(y,x)}: (y,x)\in R\rtimes R_{\m,\Gamma}\}$ is the canonical orthonormal basis for $\ell^2(R\rtimes R_{\m,\Gamma})$.
The \emph{left regular C*-algebra $C_\lambda^*(R\rtimes R_{\m,\Gamma})$ of $R\rtimes R_{\m,\Gamma}$} is the sub-C*-algebra of $\mathcal{B}(\ell^2(R\rtimes R_{\m,\Gamma}))$ generated by the left regular representation of $R\rtimes R_{\m,\Gamma}$. That is,
\[
C_\lambda^*(R\rtimes R_{\m,\Gamma}):=C^*(\{\lambda_{(b,a)} : (b,a)\in R\rtimes R_{\m,\Gamma}\}).
\]
We refer the reader to \cite{Li1:2012,Li2:2013} or \cite[Chapter~5]{CELY:2017} for the general theory of semigroup C*-algebras. Let $\I_\m^+$ denote the non-zero integral ideals of $R$ that are coprime to $\m_0$, and for each $\a\in\I_\m^+$ and $x\in R$, let $E_{(x+\a)\times (\a\cap R_{\m,\Gamma})}$ be the orthogonal projection from $\ell^2(R\rtimes R_{\m,\Gamma})$ onto the subspace $\ell^2((x+\a)\times (\a\cap R_{\m,\Gamma}))$.
The C*-algebra $C_\lambda^*(R\rtimes R_{\m,\Gamma})$ has a canonical ``diagonal'' sub-C*-algebra given by  
\[
D_\lambda(R\rtimes R_{\m,\Gamma})=\overline{\spn}(\{E_{(x+\a)\times (\a\cap R_{\m,\Gamma})} : x\in R, \a\in \I_\m^+\}),
\]
see \cite[Proposition~3.3]{Bru:2019} and \cite[Section~3]{Li1:2012}.

\section{Equilibrium states}\label{sec:equilibrium}

\subsection{The phase transition theorem}
Let $B$ be a C*-algebra. A \emph{time evolution} on $B$ is a group homomorphism $\gamma:\mathbb{R} \to \textup{Aut}(B)$ such that for each fixed $x\in B$, the map $t\mapsto \gamma_t(x)$ is continuous. The triple $(B,\mathbb{R},\gamma)$ is called a \emph{C*-dynamical system}. There is a standard notion of equilibrium in this context, namely that of KMS and ground states. We now recall the relevant definitions.

Let $(B,\mathbb{R},\gamma)$  be a C*-dynamical system. An element $x \in B$ is \emph{$\gamma$-analytic} if the map $\mathbb{R}\to B$ given by $t \mapsto \gamma_t(x)$ extends to an entire function $z\mapsto \gamma_z(x)$ from $\mathbb{C}$ to $B$. Let $\beta\in \mathbb{R}^*:=\mathbb{R}\setminus\{0\}$. A state $\varphi$ on $B$ is a \emph{$\gamma$-KMS$_\beta$ state}, or a \emph{KMS state at inverse temperature $\beta$ for $\gamma$}, if it satisfies the \emph{KMS$_\beta$ condition}
\begin{equation}\label{KMScondition}
\varphi(xy) = \varphi(y \gamma_{i\beta}(x))
\end{equation}
for all $\gamma$-analytic elements $x,y$ in a $\gamma$-invariant subset with dense linear span. The parameter $\beta$ is often called the \emph{inverse temperature}, see \cite[Chapter~5]{BR:1997} for motivation from quantum statistical mechanics. 
The \emph{$\gamma$-KMS$_0$ states} are defined to be the $\gamma$-invariant traces on $B$; these are the ``infinite temperature'' or ``chaotic'' states. 

If $B$ is unital, then \cite[Theorem~5.3.30(1)\&(2)]{BR:1997} asserts that for each $\beta\in\mathbb{R}$, the set $\Sigma_\beta$ of KMS$_\beta$ states of the system $(B,\mathbb{R},\gamma)$ is a (possibly empty) convex weak$^*$-compact subset of the state space $S(B)$ of $B$ that is also a Choquet simplex. Moreover, by \cite[Theorem~5.3.30(3)]{BR:1997}, a KMS$_\beta$ state $\phi$ is an extreme point of $\Sigma_\beta$ if and only if $\phi$ is a factor state, that is, if the von Neumann algebra $\pi_\phi(B)''$ generated by the GNS representation $\pi_\phi$ of $\phi$ is a factor.

For $\beta=\infty$, that is, for ``zero temperature'', there are two different notions of equilibrium states. A state $\varphi$ on $B$ is a \emph{$\gamma$-KMS$_\infty$-state} if it is the weak*-limit of a net $(\varphi_i)_i$ where $\varphi_i$ is a $\gamma$-KMS$_{\beta_i}$-state for each $i$ and $\beta_i\to\infty$, and a state $\varphi$ on $B$ is a \emph{$\gamma$-ground state} if the map 
\begin{equation*}
z\mapsto \varphi(x\gamma_z(y))
\end{equation*}
is bounded on the upper half-plane for all $\gamma$-analytic elements $x,y$ in a $\gamma$-invariant subset with dense linear span. 
Every KMS$_\infty$ state is a ground state by \cite[Proposition~5.3.23]{BR:1997}, but there may be ground states that are not KMS$_\infty$ states, as we shall see in Theorem~\ref{thm:phasetransitions}(iii)\&(iv) below. Also see \cite[Corollary~1.8]{LLN:2019} for a more general explanation of why the set of  KMS$_\infty$ states may be properly contained in the set of ground states, and \cite[Theorem~7.1(3)\&(4)]{LR:2010} and \cite[Section~8]{CDL:2013} for examples of this phenomenon.
Note that the distinction between KMS$_\infty$ states and ground states was first made in \cite[Definition~3.7]{CM:2008}, and is not observed in \cite{BR:1997}.

If $B$ is unital, then the set $\Sigma_\infty$ of KMS$_\infty$ states of the system $(B,\mathbb{R},\gamma)$ is a convex weak$^*$-compact subset of $S(B)$ by \cite[Proposition~3.8]{CM:2008}, whereas the set of ground states need not be a simplex, see \cite[Remark~7.2(v)]{LR:2010}.

We now return to the C*-algebra $C_\lambda^*(R\rtimes R_{\m,\Gamma})$. For a non-zero ideal $\a$ of $R$, we let $N(\a):=|R/\a|$ denote the norm of $\a$, which is always finite. The map $a\mapsto N(aR)$ defines a semigroup homomorphism from $R^\times$ to the multiplicative semigroup $\mathbb{N}^\times:=\mathbb{N}\setminus\{0\}$ of positive integers, and the map $R\rtimes R_{\m,\Gamma}\to \mathbb{R}_+^*$ given by $(b,a)\mapsto N(a)$ is a semigroup homomorphism. For each $t\in\mathbb{R}$, let $U_t$ denote the diagonal unitary on $\ell^2(R\rtimes R_{\m,\Gamma})$ that is determined on the canonical basis by
\[
U_t(\varepsilon_{(b,a)})=N(a)^{it}\varepsilon_{(b,a)}.
\] 

Then $t\mapsto U_t$ defines a unitary representation $\mathbb{R}\to\mathcal{U}(\ell^2(R\rtimes R_{\m,\Gamma}))$, and a routine argument shows that the group $\{U_t : t\in\mathbb{R}\}$ of unitaries implements a time evolution on $C_\lambda^*(R\rtimes R_{\m,\Gamma})$; specifically, we have the following result.

\begin{proposition}\label{prop:timeevolution}
	There is a time evolution $\sigma:\mathbb{R}\to\textup{Aut}(C_\lambda^*(R\rtimes R_{\m,\Gamma}))$ such that 
	\begin{equation}\label{eqn:timeevolution}
	\sigma_t(\lambda_{(b,a)})=N(a)^{it}\lambda_{(b,a)} \quad  \text{ for all } (b,a)\in  R\rtimes R_{\m,\Gamma}\text{ and } t\in\mathbb{R}.
	\end{equation}
\end{proposition}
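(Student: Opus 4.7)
The plan is to define $\sigma_t := \mathrm{Ad}(U_t)$, i.e., $\sigma_t(x) := U_t x U_t^*$ for $x \in \mathcal{B}(\ell^2(R\rtimes R_{\m,\Gamma}))$, and then show that this restricts to a strongly continuous one-parameter group of *-automorphisms of the sub-C*-algebra $C_\lambda^*(R\rtimes R_{\m,\Gamma})$ satisfying the stated formula on generators.

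First, I would verify the formula \eqref{eqn:timeevolution} by a direct computation on the canonical orthonormal basis. For $(b,a) \in R\rtimes R_{\m,\Gamma}$ and $(y,x) \in R\rtimes R_{\m,\Gamma}$, one computes
\[
U_t \lambda_{(b,a)} U_t^* \varepsilon_{(y,x)} = N(x)^{-it} U_t \varepsilon_{(b+ay,\,ax)} = N(ax)^{it} N(x)^{-it} \varepsilon_{(b+ay,\,ax)} = N(a)^{it} \lambda_{(b,a)} \varepsilon_{(y,x)},
\]
using multiplicativity of the norm. Hence $U_t \lambda_{(b,a)} U_t^* = N(a)^{it} \lambda_{(b,a)}$. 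Since the elements $\{\lambda_{(b,a)} : (b,a) \in R\rtimes R_{\m,\Gamma}\}$ generate $C_\lambda^*(R\rtimes R_{\m,\Gamma})$ and are mapped by $\mathrm{Ad}(U_t)$ to scalar multiples of themselves, the C*-subalgebra is invariant under $\mathrm{Ad}(U_t)$, and the restriction $\sigma_t := \mathrm{Ad}(U_t)|_{C_\lambda^*(R\rtimes R_{\m,\Gamma})}$ is a well-defined *-automorphism.

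Next, $t \mapsto U_t$ is a unitary representation of $\mathbb{R}$ on $\ell^2(R\rtimes R_{\m,\Gamma})$, since $U_t U_s \varepsilon_{(b,a)} = N(a)^{i(t+s)} \varepsilon_{(b,a)} = U_{t+s}\varepsilon_{(b,a)}$ on basis vectors, so $\sigma_{t+s} = \sigma_t \circ \sigma_s$ and $\sigma_0 = \mathrm{id}$. It remains to verify strong continuity, i.e., norm-continuity of $t \mapsto \sigma_t(x)$ for each fixed $x \in C_\lambda^*(R\rtimes R_{\m,\Gamma})$. On a generator, $\sigma_t(\lambda_{(b,a)}) = N(a)^{it}\lambda_{(b,a)}$ depends norm-continuously on $t$, and hence so does $\sigma_t(p)$ for any *-polynomial $p$ in the generators. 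Given $x \in C_\lambda^*(R\rtimes R_{\m,\Gamma})$ and $\varepsilon > 0$, choose such a $p$ with $\|x - p\| < \varepsilon/3$; then, using that each $\sigma_t$ is isometric,
\[
\|\sigma_t(x) - \sigma_{t_0}(x)\| \leq 2\|x - p\| + \|\sigma_t(p) - \sigma_{t_0}(p)\| < \varepsilon
\]
for $t$ sufficiently close to $t_0$. This gives continuity at arbitrary $t_0 \in \mathbb{R}$ and completes the proof.

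There is no real obstacle here; the entire argument is a routine verification of the definition of a C*-dynamical system, with the only mild subtlety being the standard $\varepsilon/3$ density argument to pass from norm-continuity on the dense *-subalgebra generated by the $\lambda_{(b,a)}$ to norm-continuity on all of $C_\lambda^*(R\rtimes R_{\m,\Gamma})$.
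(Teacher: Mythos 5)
Your proposal is correct and follows exactly the approach the paper has in mind: the paper introduces the unitaries $U_t$ and states that "a routine argument shows that the group $\{U_t : t\in\mathbb{R}\}$ of unitaries implements a time evolution," which is precisely your $\sigma_t = \mathrm{Ad}(U_t)$ construction with the basis-vector computation, invariance of the generated C*-subalgebra, and the standard $\varepsilon/3$ density argument for strong continuity.
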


Let $\k\in\I_\m/i(K_{\m,\Gamma})$ be an ideal class. An integral ideal $\a\in\k$ is said to be \emph{norm-minimizing in the class $\k$} if $N(\a)\leq N(\b)$ for every other integral ideal $\b$ in $\k$; note that each ideal class $\k$ contains only finitely many norm-minimizing ideals.
Norm-minimizing ideals appeared in \cite{LvF:2006} during the investigation of phase transitions for C*-dynamical systems associated with Hecke C*-algebras, and then later in \cite[Section~8]{CDL:2013} and \cite{LLN:2019}.

Let $R_{\m,\Gamma}^*:=R_{\m,\Gamma}\cap R^*$ be the group of invertible elements in $R_{\m,\Gamma}$. For each fractional ideal $\a\in\I_\m$, the group $R_{\m,\Gamma}^*$ acts on (the additive group of) $\a$ by multiplication, so we may form the semi-direct product group $\a\rtimes R_{\m,\Gamma}^*$. 

View $\ell^\infty(R\rtimes R_{\m,\Gamma})$ as a sub-C*-algebra of $\mathcal{B}(\ell^2(R\rtimes R_{\m,\Gamma}))$ in the canonical way, and let $\E$ be the restriction to $C_\lambda^*(R\rtimes R_{\m,\Gamma})$ of the canonical faithful conditional expectation $\mathcal{B}(\ell^2(R\rtimes R_{\m,\Gamma}))\to \ell^\infty(R\rtimes R_{\m,\Gamma})$. It follows from \cite[Lemma~3.11]{Li1:2012} that the range of $\E$ is equal to $D_\lambda(R\rtimes R_{\m,\Gamma})$. 
The main result of this paper is the following phase transition theorem.

\begin{theorem}\label{thm:phasetransitions}
Let $K$ be a number field, $\m$ a modulus for $K$, and $\Gamma$ a subgroup of $(R/\m)^*$. For each $\k\in\I_\m/i(K_{\m,\Gamma})$, choose a norm-minimizing ideal $\a_{\k,1}$ in $\k$. 
\begin{enumerate}[\upshape(i)]
	\item There are no $\sigma$-KMS$_\beta$ states on $C_\lambda^*(R\rtimes R_{\m,\Gamma})$ for $\beta<1$.
	\item For each $\beta\in [1,2]$, there is a unique $\sigma$-KMS$_\beta$ state $\phi_\beta$ on $C_\lambda^*(R\rtimes R_{\m,\Gamma})$. The state $\phi_\beta$ factors through the expectation $\E:C_\lambda^*(R\rtimes R_{\m,\Gamma})\to D_\lambda(R\rtimes R_{\m,\Gamma})$ and is determined by the values 
		\[
		\phi_\beta(E_{(x+\a)\times(\a\cap R_{\m,\Gamma})})=N(\a)^{-\beta}\quad \text{for } x\in R \text{ and } \a\in\I_\m^+.
		\] 
		Moreover, $\phi_\beta$ is of type III$_1$; indeed, the von Neumann algebra $\pi_{\phi_\beta}(C_\lambda^*(R\rtimes R_{\m,\Gamma}))''$ generated by the GNS representation $\pi_{\phi_\beta}$ of $\phi_\beta$ is isomorphic to the injective factor of type III$_1$ with separable predual.
    \item For each $\beta\in (2,\infty)$, there is an affine isomorphism of the simplex of tracial states on the C*-algebra 
    \[
    \bigoplus_{\k\in \I_\m/i(K_{\m,\Gamma})} C^*(\a_{\k,1}\rtimes R_{\m,\Gamma}^*)
    \] 
    onto the simplex of $\sigma$-KMS$_\beta$ states on $C_\lambda^*(R\rtimes R_{\m,\Gamma})$. 
	\item There is an affine isomorphism of the $\sigma$-ground state space of $C_\lambda^*(R\rtimes R_{\m,\Gamma})$ onto the state space of the C*-algebra 
	\[
		\bigoplus_{\k\in \I_\m/i(K_{\m,\Gamma})} M_{k_\k\cdot N(\a_{\k,1})}(C^*(\a_{\k,1}\rtimes R_{\m,\Gamma}^*))
	\]
	 where $k_\k$ is the number of norm-minimizing ideals in the class $\k$.
\end{enumerate}
\end{theorem}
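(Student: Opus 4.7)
I would prove the four claims separately, following the template of \cite{LR:2010,LN:2011,CDL:2013,Nesh:2013,LLN:2019} but adapted so that the relevant ``symmetry group'' is the finite quotient $\I_\m/i(K_{\m,\Gamma})$ rather than $\Cl(K)$. Parts (iii) and (iv) are essentially a groupoid-theoretic unwinding once the groupoid model for $C_\lambda^*(R\rtimes R_{\m,\Gamma})$ from \cite{Bru:2019} is invoked; the real analytic work sits in (ii), where uniqueness comes from non-vanishing of Hecke $L$-functions at $s=1$, and in the type III$_1$ classification, which is postponed to Section~\ref{sec:typeIII1}.

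\textbf{Parts (i) and (ii).} For (i), let $\phi$ be a $\sigma$-KMS$_\beta$ state and apply the KMS condition to the analytic pair $x=\lambda_{(0,a)}$, $y=\lambda_{(0,a)}^*$ for $a\in R_{\m,\Gamma}$; this yields $\phi(E_{aR\times(aR\cap R_{\m,\Gamma})})=N(a)^{-\beta}$. Conjugating by the $\sigma$-fixed additive isometries $\lambda_{(b,1)}$ shows every additive translate $E_{(b+aR)\times(aR\cap R_{\m,\Gamma})}$ carries the same mass, and summing the $N(a)$ orthogonal coset projections against $1$ forces $N(a)^{1-\beta}\le 1$; letting $N(a)\to\infty$ gives $\beta\ge 1$. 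For existence in (ii), I would define $\phi_\beta$ by the prescribed formula on the diagonal $D_\lambda$, compose with the canonical expectation $\E$, and directly verify the KMS condition on monomials $\lambda_{(b,a)}\lambda_{(c,d)}^*$. For uniqueness I would first show every KMS$_\beta$ state factors through $\E$: the KMS condition together with the computation above forces $\phi(\lambda_{(b,a)}\lambda_{(c,d)}^*)=0$ whenever the monomial lies outside $D_\lambda$. Pinning down $\phi|_{D_\lambda}$ is the crux. Principal projections are already determined by step one, so the only freedom is in the values on $E_{(x+\a)\times(\a\cap R_{\m,\Gamma})}$ for $\a$ running over representatives of non-principal classes in $\I_\m/i(K_{\m,\Gamma})$. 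Fourier-decomposing these values over the character group of $\I_\m/i(K_{\m,\Gamma})$ rewrites them as limits of character-twisted partial Dirichlet series, which converge to multiples of Hecke $L$-functions $L(\beta,\chi)$. Non-vanishing of $L(1,\chi)$ for non-trivial $\chi$, combined with a standard Tauberian comparison against the principal $L$-function, forces the non-trivial components to vanish and leaves the uniform distribution over classes, producing the unique $\phi_\beta$ of the statement. This extends the argument of \cite[Section~3]{Nesh:2013} from $\Cl(K)$ to $\I_\m/i(K_{\m,\Gamma})$.

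\textbf{Parts (iii) and (iv).} I would invoke the groupoid model and the correspondence of \cite[Theorem~1.3]{Nesh:2013} between $\sigma$-KMS$_\beta$ states and pairs consisting of a quasi-invariant probability measure with Radon--Nikodym cocycle $N^{-\beta}$ together with a measurable field of traces on the isotropy group C*-algebras. For $\beta>2$ the partition function $\sum_{\a}N(\a)^{-\beta}$ converges absolutely, so by standard Gibbs arguments the measure concentrates on the ``minimal'' points whose underlying ideal is a norm-minimizer in some class $\k$; the isotropy at such a point is $\a_{\k,1}\rtimes R_{\m,\Gamma}^*$, and the ideal-class decomposition falls out of the orbit analysis, giving the affine isomorphism in (iii). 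For (iv) I would apply the analogous ground-state correspondence of \cite{LLN:2019}: ground states correspond to states on the isotropy C*-algebra amplified by a finite ``boundary fibre'', whose dimension as a module over $C^*(\a_{\k,1}\rtimes R_{\m,\Gamma}^*)$ is $k_\k\cdot N(\a_{\k,1})$, with $k_\k$ counting the norm-minimizers in class $\k$ and $N(\a_{\k,1})$ accounting for the additive fibre over $R/\a_{\k,1}$.

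\textbf{Principal obstacles.} The delicate step is the uniqueness argument in (ii) at the boundary $\beta=1$, where the principal $L$-function has a simple pole: extracting vanishing of the non-principal $\chi$-components from $L(1,\chi)\neq 0$ requires a careful asymptotic comparison of the character-twisted Dirichlet series for ideals in a fixed class, and this is where the generalization of \cite{Nesh:2013} from $\Cl(K)$ to the finer quotient $\I_\m/i(K_{\m,\Gamma})$ becomes non-trivial. The other substantial step is the type III$_1$ assertion; following \cite{LN:2011,LagNesh:2014} I would compute the Connes $T$-invariant of $\pi_{\phi_\beta}(C_\lambda^*(R\rtimes R_{\m,\Gamma}))''$ by establishing ergodicity of the flow of weights, with the key input being a prime ideal theorem with equidistribution across classes of $\I_\m/i(K_{\m,\Gamma})$ (a Chebotarev-type statement for the ray class field), which produces enough primes in each fixed class to rule out any non-trivial period and hence forces $T=\{0\}$. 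Amenability of the groupoid and separability then yield the identification with the injective type III$_1$ factor.
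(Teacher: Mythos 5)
Your overall route coincides with the paper's: the groupoid model from \cite{Bru:2019}, Neshveyev's correspondence between KMS$_\beta$ states and quasi-invariant measures paired with traces on isotropy groups, a character-decomposition/$L$-function argument for uniqueness on $[1,2]$, the ground-state theorem of \cite{LLN:2019} for part (iv), and the Lagarias--Neshveyev ratio-set machinery for type III$_1$. There are, however, three concrete gaps. First, in part (ii) you assert that the KMS condition forces $\phi(\lambda_{(b,a)}\lambda_{(c,d)}^*)=0$ off the diagonal, so that every KMS$_\beta$ state factors through $\E$. This is not automatic: the groupoid has non-trivial isotropy (coming from $R_{\m,\Gamma}^*$ and from additive fixed points), and by Neshveyev's theorem a KMS$_\beta$ state is determined by a quasi-invariant measure \emph{together with} a measurable field of traces on the isotropy groups. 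To get uniqueness one must prove that the set of points of $\Omega_R^\m$ with non-trivial isotropy has $\mu_\beta$-measure zero for every $\beta\in[1,2]$; the paper does this by disintegrating $\mu_\beta$ over $\hat{R}_S/\hat{R}_S^*$ and estimating the fibrewise Haar measure of the fixed-point sets. Without this step the uniqueness claim is incomplete. Relatedly, your character argument needs the two technical devices the paper supplies: the disintegration $\mu=\pi_*(m\times\nu)$ reducing everything to measures on $\hat{R}_S/\hat{R}_S^*$ scaled by $N^{-(\beta-1)}$ (Proposition~\ref{prop:disintegration}), and the extension of such a measure to $\I_\m/i(K_{\m,\Gamma})\times\hat{R}_S/\hat{R}_S^*$ (Lemma~\ref{lem:1-1}), so that the free abelian monoid $\I_\m^+$, rather than the awkward group $K_{\m,\Gamma}$, acts.

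Second, you cite the wrong analytic input for uniqueness: what is needed is that $L(\chi,s)$ has \emph{no pole} at $s=1$ for non-trivial $\chi$ while $\zeta_K$ does, so that the ratio $|L(\chi,\beta)|/\zeta_K(\beta)$ controlling $\|Pf\|_{L^2}$ in the ergodicity computation tends to $0$ as $\beta\to1^+$ (monotonicity then pushes this to all $\beta\le1$). Non-vanishing of $L(1,\chi)$ is instead the input, via the prime ideal theorem for classes of $\I_\m/i(K_{\m,\Gamma})$ (Lemma~\ref{lem:asymptotics}), for the type III$_1$ computation --- and there the goal is ergodicity of the $G_{\m,\Gamma}$-action on $\mathbb{R}_+^*\times\Omega_K^\m$, i.e.\ triviality of the flow of weights; establishing $T(M)=\{0\}$ alone would not exclude type III$_0$. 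Third, in part (iii) the claim that for $\beta\in(2,\infty)$ the measure ``concentrates on the norm-minimizing points'' is false and incompatible with quasi-invariance: the Borel--Cantelli argument shows the measure is concentrated on the \emph{countable orbit} $\{[\mathbf{b},\a]:\a\in\k\cap\I_\m^+\}$, weighted by $N(\a)^{-\beta}/\zeta_\k(\beta)$ over \emph{all} integral ideals in the class; the norm-minimizer $\a_{\k,1}$ enters only as a base point of this transitive orbit, whose isotropy group is $\a_{\k,1}\rtimes R_{\m,\Gamma}^*$. Concentration on norm-minimizers is the correct picture only in the ground-state/boundary-groupoid analysis of part (iv).
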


Before continuing to the proof, we make several remarks.

\begin{remark}
\begin{itemize}
\item[(a)] For the particular case of trivial $\m$ and $\Gamma$, Theorem~\ref{thm:phasetransitions} recovers the parameterization results obtained in \cite[Sections~6~and~7]{CDL:2013}. 
\item[(b)] If $\a$ and $\b$ lie in the same class $\k\in\I_\m/i(K_{\m,\Gamma})$, so that there is a $k\in K_{\m,\Gamma}$ with $\a=k\b$, then the map $x\mapsto kx$ defines an $R_{\m,\Gamma}^*$-equivariant isomorphism $\a\cong \b$, so that $\a\rtimes R_{\m,\Gamma}^*\cong \b\rtimes R_{\m,\Gamma}^*$. Thus, in parts (iii) and (iv), we could replace each C*-algebra $C^*(\a_{\k,1}\rtimes R_{\m,\Gamma}^*)$ with $C^*(\a_\k\rtimes R_{\m,\Gamma}^*)$ for any other ideal $\a_\k$ in the class $\k$. 
\item[(c)]  In light of Theorem~\ref{thm:phasetransitions}(iii), it is natural to ask if one can explicitly describe the simplex of traces on the group C*-algebra $C^*(\a_\k\rtimes R_{\m,\Gamma}^*)$ for a fixed class $\k\in \I_\m/i(K_{\m,\Gamma})$ and integral ideal $\a_\k\in\k$. It turns out that, even for trivial $\m$ and $\Gamma$, this is a difficult problem that is related to the generalized Furstenberg conjecture, see \cite{LW:2018} and \cite[\S~5]{BLT}. 
\item[(d)] In joint work with Xin Li \cite[Theorem~4.1]{BruLi:2019}, we prove that the $K$-theory of the C*-algebra $C_\lambda^*(R\rtimes R_{\m,\Gamma})$ decomposes as
\[
K_*(C_\lambda^*(R\rtimes R_{\m,\Gamma}))\cong\bigoplus_{\k\in \I_\m/i(K_{\m,\Gamma})} K_*(C^*(\a_\k\rtimes R_{\m,\Gamma}^*))
\]
where $\a_\k$ is an integral ideal in the class $\k$. 
It is interesting that the C*-algebra $\bigoplus_{\k\in \I_\m/i(K_{\m,\Gamma})} C^*(\a_\k\rtimes R_{\m,\Gamma}^*)$ appears in both the $K$-theory formula and the parameterization of the low temperature KMS states. For the $ax+b$-semigroup $R\rtimes R^\times$, this has already been discussed by Cuntz in \cite[Chapter~6,~Section~6]{CELY:2017}.
\item[(e)] An alternative method for computing the low temperature KMS states on $C_\lambda^\times(R\rtimes R^\times)$ is discussed in \cite[Chapter~6,~Section~6]{CELY:2017}. Presumably, it could also be used here.
\item[(f)] It follows from Theorem~\ref{thm:phasetransitions}(iii)\&(iv) that there are usually ground states which are not KMS$_\infty$ states, that is, there is a phase transition at $\beta=\infty$. 
For example, let $K=\mathbb{Q}$, so that $R=\mathbb{Z}$. Let $m\in\mathbb{N}^\times$ be a positive natural number, and let $\m=\m_\infty\m_0$ where $\m_\infty$ takes the value one at the only real embedding of $\mathbb{Q}$ and $\m_0(p):=v_p(m)$. 
Then a calculation shows that the map  $(\mathbb{Z}/m\mathbb{Z})^*\to \I_\m/i(K_{\m,1})$ given by $[a]_m\mapsto [a\mathbb{Z}]$
is a well-defined isomorphism $(\mathbb{Z}/m\mathbb{Z})^*\cong\I_\m/i(K_{\m,1})$ and that each class $\k\in \I_\m/i(K_{\m,1})$ contains a unique norm-minimizing ideal of norm $n_\k$ where $n_\k$ is the smallest positive integer in the residue class modulo $m$ corresponding to $\k$ under the above isomorphism.
Moreover, in this situation, the isotropy groups appearing in Theorem~\ref{thm:phasetransitions}(iii)\&(iv) are all isomorphic to $\mathbb{Z}$, so Theorem~\ref{thm:phasetransitions} implies that the KMS$_\infty$ states are parameterized by traces on the commutative C*-algebra
\[
\bigoplus_{\k\in(\mathbb{Z}/m\mathbb{Z})^*}C^*(\mathbb{Z})\cong \bigoplus_{\k\in(\mathbb{Z}/m\mathbb{Z})^*}C(\mathbb{T}),
\]
whereas the ground states are parameterized by states on the C*-algebra
\[
\bigoplus_{\k\in(\mathbb{Z}/m\mathbb{Z})^*}M_{n_\k}(C^*(\mathbb{Z}))\cong\bigoplus_{\k\in(\mathbb{Z}/m\mathbb{Z})^*}M_{n_\k}(C(\mathbb{T})).
\]
\item[(g)] For $\beta>2$, the extremal $\sigma$-KMS$_\beta$ states on $C_\lambda^*(R\rtimes R_{\m,\Gamma})$ are either type I or type II. However, the techniques needed to deal with the case $\beta>2$ are rather different since these states usually do not factor through the expectation $\E$, see \cite{BLT}
\end{itemize}
\end{remark}

This section and the next are devoted to the proof of Theorem~\ref{thm:phasetransitions}, which we break up into several parts. The next five subsections contain some preliminaries, and the proofs of parts (i) through (iv), excluding the type computation. The proof that the von Neumann algebra $\pi_{\phi_\beta}(C_\lambda^*(R\rtimes R_{\m,\Gamma}))''$ is isomorphic to the injective factor of type III$_1$ with separable predual is given in Section~\ref{sec:typeIII1}.

\subsection{Preliminaries for the proof}\label{subsec:prep}
The semigroup $R\rtimes R_{\m,\Gamma}$ canonically embeds into the group $(R_\m^{-1}R)\rtimes K_{\m,\Gamma}$ where $(R_\m^{-1}R)=\{\frac{a}{b} : a\in R, b\in R_\m\}$ is the localization of $R$ at $R_\m$. By \cite[Proposition~3.2]{Bru:2019}, the semigroup $R\rtimes R_{\m,\Gamma}$ is left Ore and its group of left quotients coincides with $(R_\m^{-1}R)\rtimes K_{\m,\Gamma}$. That is, $(R\rtimes R_{\m,\Gamma})^{-1}(R\rtimes R_{\m,\Gamma})=(R_\m^{-1}R)\rtimes K_{\m,\Gamma}$.
To simplify notation, let 
\[
P_{\m,\Gamma}:=R\rtimes R_{\m,\Gamma} \quad\text{and}\quad G_{\m,\Gamma}:=(R_\m^{-1}R)\rtimes K_{\m,\Gamma}.
\] 
Also let $S:=\{\p\in\P_K : \p\mid \m_0\}$ be the \emph{support of $\m_0$}, which is a finite set of primes, and put $\mathcal{P}_K^\m:=\P_K\setminus S$.

\subsubsection{A groupoid model} The material below on adeles and a groupoid model for $C_\lambda^*(P_{\m,\Gamma})$ is from \cite[Section~5]{Bru:2019}. It was motivated by similar results from \cite[Section~5]{CDL:2013} for the special case where $\m$ and $\Gamma$ are trivial. 

For each non-zero prime ideal $\p$ of $R$, let $K_\p$ be the corresponding $\p$-adic completion of $K$ and $R_\p$ the ring of integers in $K_\p$. Let
\[
\mathbb{A}_S:=\Big\{\mathbf{a}=(a_\p)_\p\in \prod_{\p\in \P_K^\m}K_\p : a_\p\in R_\p \text{ for all but finitely many }\p\Big\}
\]
equipped with the restricted product topology with respect to the compact open subsets $R_\p\subseteq K_\p$. 
Denote by $\hat{R}_S$ the compact subring $\prod_{\p\in \P_K^\m}R_\p$, and let $\hat{R}_S^*:=\prod_{\p\in \P_K^\m}R_\p^*$ be the group of units of $\hat{R}_S$. The compact group $\hat{R}_S^*$ acts on $\mathbb{A}_S$ by multiplication, and we let $\bar{\mathbf{a}}$ denote the image of $\mathbf{a}\in\mathbb{A}_S$ under the quotient mapping $\mathbb{A}_S\to \mathbb{A}_S/\hat{R}_S^*$.
Define an equivalence relation on $\mathbb{A}_S\times\mathbb{A}_S/\hat{R}_S^*$ by 
\[
(\mathbf{b},\bar{\mathbf{a}})\sim(\mathbf{d},\bar{\mathbf{c}}) \quad \text{if } \bar{\mathbf{a}}=\bar{\mathbf{c}} \text{ and } \mathbf{b}-\mathbf{d}\in \bar{\mathbf{a}}\hat{R}_S.
\] 
Via the diagonal embedding, the groups $R_\m^{-1}R_\m$ and $K_{\m,\Gamma}$ act on $\mathbb{A}_S$ by translation and multiplication, respectively. The canonical action of $G_{\m,\Gamma}$ on $\mathbb{A}_S\times\mathbb{A}_S/\hat{R}_S^*$ given by $(n,k)(\mathbf{b},\bar{\mathbf{a}})=(n+k\mathbf{b},k\bar{\mathbf{a}})$ descends to a well-defined action on the locally compact Hausdorff quotient space 
\[
\Omega_K^\m:=(\mathbb{A}_S\times\mathbb{A}_S/\hat{R}_S^*)/\sim.
\]
By restricting the above equivalence relation to the subset $\hat{R}_S\times\hat{R}_S/\hat{R}_S^*\subseteq\mathbb{A}_S\times\mathbb{A}_S/\hat{R}_S^*$, we obtain the compact open subset
\[
\Omega_R^\m:=(\hat{R}_S\times\hat{R}_S/\hat{R}_S^*)/\sim
\] 
of $\Omega_K^\m$. Let $G_{\m,\Gamma}\ltimes\Omega_R^\m$ be the reduction of the transformation groupoid $G_{\m,\Gamma}\ltimes\Omega_K^\m$ by the set $\Omega_R^\m$, that is,
\[
G_{\m,\Gamma}\ltimes\Omega_R^\m=\{(g,w)\in G_{\m,\Gamma}\ltimes\Omega_K^\m : gw\in \Omega_R^\m\}.
\] 
Our choice of notation for the reduction groupoid comes from the fact that $G_{\m,\Gamma}\ltimes\Omega_R^\m$ can be canonically identified with a \emph{partial transformation groupoid}, see \cite[Section~3.3]{Li:2017}.

\begin{proposition}[{\cite[Propositions~4.1~and~5.3]{Bru:2019}}]\label{prop:gpoidmodel}
There is an isomorphism
\[
\vartheta:C_\lambda^*(P_{\m,\Gamma})\cong C^*(G_{\m,\Gamma}\ltimes\Omega_R^\m)
\]
that is determined on generators by $\vartheta(\lambda_{(b,a)})=1_{\{(b,a)\}\times\Omega_R^\m}$ for $(b,a)\in P_{\m,\Gamma}$.
\end{proposition}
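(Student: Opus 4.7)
The plan is to derive this isomorphism by first identifying the spectrum of the commutative diagonal subalgebra $D_\lambda(P_{\m,\Gamma})$ with the space $\Omega_R^\m$ and then exhibiting $C_\lambda^*(P_{\m,\Gamma})$ as the reduced C*-algebra of the partial transformation groupoid built from the canonical (partial) action of $G_{\m,\Gamma}$ on this spectrum. This is the familiar ``semigroup C*-algebra as a groupoid C*-algebra'' strategy developed by Li and used throughout \cite[Chapter~5]{CELY:2017}, specialized to the present left Ore setting.

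First I would identify $\textup{Spec}(D_\lambda(P_{\m,\Gamma}))$ with $\Omega_R^\m$. By the description of $D_\lambda(P_{\m,\Gamma})$ recorded after Proposition~\ref{prop:timeevolution}, this algebra is the closed linear span of the projections $E_{(x+\a)\times (\a\cap R_{\m,\Gamma})}$. Since $R$ is a Dedekind domain, the integral ideals $\a\in\I_\m^+$ satisfy the Chinese remainder theorem with respect to intersection, so the generating projections form an independent family and $D_\lambda(P_{\m,\Gamma})$ is a commutative AF-algebra. A character of $D_\lambda(P_{\m,\Gamma})$ therefore corresponds to a compatible family of cosets $x+\a$ as $\a$ shrinks through $\I_\m^+$; assembling these families over the $\p$-adic completions for $\p\in\P_K^\m$ yields a point $\mathbf{b}\in \hat{R}_S$, while the family of ideals $\a\subseteq \hat{R}_S$ is recorded precisely by a class $\bar{\mathbf{a}}\in\hat{R}_S/\hat{R}_S^*$ (the quotient accounts for the fact that only $\a\hat{R}_S$, not a generator, is intrinsic). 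The equivalence relation $\sim$ captures that $\mathbf{b}$ is only determined modulo $\bar{\mathbf{a}}\hat{R}_S$, giving the homeomorphism $\textup{Spec}(D_\lambda(P_{\m,\Gamma}))\cong \Omega_R^\m$.

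Second, I would dilate the natural inner action of $P_{\m,\Gamma}$ on $D_\lambda(P_{\m,\Gamma})$ (given on generators by conjugation with $\lambda_{(b,a)}$) to a partial action of the group of left quotients $G_{\m,\Gamma}$ on $\Omega_R^\m$. Because $P_{\m,\Gamma}$ is left Ore with $(P_{\m,\Gamma})^{-1}P_{\m,\Gamma}=G_{\m,\Gamma}$, this dilation exists and is unique; tracing through generators shows it agrees with the restriction to $\Omega_R^\m$ of the global action of $G_{\m,\Gamma}$ on $\Omega_K^\m$ given by $(n,k)\cdot(\mathbf{b},\bar{\mathbf{a}})=(n+k\mathbf{b},k\bar{\mathbf{a}})$. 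The associated partial transformation groupoid is exactly the reduction $G_{\m,\Gamma}\ltimes\Omega_R^\m$, and it is étale, Hausdorff, and second countable.

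Third, I would invoke the general result that for a left Ore semigroup $P$ with group of left quotients $G$ satisfying the independence condition, $C_\lambda^*(P)$ is canonically isomorphic to the reduced C*-algebra of the partial transformation groupoid $G\ltimes\textup{Spec}(D_\lambda(P))$; see \cite[Section~3]{Li:2017} or the account in \cite[Chapter~5]{CELY:2017}. The independence condition for $P_{\m,\Gamma}$ reduces to the Dedekind property already used in Step~1. Applying this theorem to $P_{\m,\Gamma}$ and composing with the homeomorphism from Step~1 produces the isomorphism $\vartheta$. Verification of the formula $\vartheta(\lambda_{(b,a)})=1_{\{(b,a)\}\times \Omega_R^\m}$ is then immediate from the construction: the generator $\lambda_{(b,a)}$ implements the translation $(b,a)$ on the left regular representation, and the characteristic function of the bisection $\{(b,a)\}\times\Omega_R^\m$ implements the same translation at the level of the groupoid C*-algebra.

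The main technical obstacle is Step~1 — specifically, checking that every compatible family of cosets truly assembles into a unique class $(\mathbf{b},\bar{\mathbf{a}})\in\Omega_R^\m$, and that the map on spectra is equivariant with respect to the adelic action written down in the excerpt rather than some twisted variant. Once the adelic description of $\textup{Spec}(D_\lambda(P_{\m,\Gamma}))$ is in place, the rest is a careful but direct application of the Li/CELY machinery.
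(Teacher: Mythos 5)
Your proposal follows essentially the same route as the paper's proof: both invoke Li's semigroup-C*-algebra machinery to realize $C_\lambda^*(P_{\m,\Gamma})$ as the C*-algebra of the partial transformation groupoid $G_{\m,\Gamma}\ltimes\textup{Spec}(D_\lambda(P_{\m,\Gamma}))$, and then identify $\textup{Spec}(D_\lambda(P_{\m,\Gamma}))$ with $\Omega_R^\m$ $G_{\m,\Gamma}$-equivariantly (the paper does this by exhibiting an injective and surjective $*$-homomorphism $D_\lambda(P_{\m,\Gamma})\to C(\Omega_R^\m)$ sending $E_{(x+\a)\times(\a\cap R_{\m,\Gamma})}$ to $1_{V_{(x+\a)\times\a^\times}}$, which is the dual formulation of your character analysis). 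The point you flag as the main technical obstacle --- that every character of the diagonal assembles into a unique class in $\Omega_R^\m$ --- is exactly the injectivity/surjectivity check the paper carries out via the independence of the constructible ideals and the argument of \cite[Proposition~5.2]{CDL:2013}, so your plan is correct and matches the paper's.
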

\begin{proof}
The key result needed here is \cite[Proposition~5.3]{Bru:2019}. The proof briefly sketched in \cite{Bru:2019} follows arguments from \cite[Section~2]{Li:2014} closely. Here, we shall sketch another more direct proof, which is closer to the proof of the analogous result given in \cite[Section~5]{CDL:2013} for the case of the full $ax+b$-semigroup.

Using Li's theory of semigroup C*-algebras, it is shown in \cite[Section~5]{Bru:2019} that $C_\lambda^*(P_{\m,\Gamma})$ can be canonically identified with the groupoid C*-algebra $C_r^*(G_{\m,\Gamma}\ltimes \Omega_{P_{\m,\Gamma}})$ of the partial transformation groupoid $G_{\m,\Gamma}\ltimes \Omega_{P_{\m,\Gamma}}$ where $\Omega_{P_{\m,\Gamma}}$ is the spectrum of the commutative C*-algebra 
\[
D_\lambda(P_{\m,\Gamma})=\overline{\spn}(\{E_{(x+\a)\times (\a\cap R_{\m,\Gamma})} : x\in R, \a\in\I_\m^+\}),
\]
$E_{(x+\a)\times (\a\cap R_{\m,\Gamma})}\in\mathcal{B}(\ell^2(R\rtimes R_{\m,\Gamma}))$ is the orthogonal projection onto the subspace $\ell^2((x+\a)\times (\a\cap R_{\m,\Gamma}))$, and the partial action of $G_{\m,\Gamma}$ on $D_\lambda(P_{\m,\Gamma})$ is determined by $(n,k)E_{(x+\a)\times (\a\cap R_{\m,\Gamma})}=E_{(n+kx+k\a)\times (k\a)\cap R_{\m,\Gamma}}$ for $(n,k)\in G_{\m,\Gamma}$ and $x\in R$, $\a\in\I_\m^+$ such that $k\a\in\I_\m^+$ and $n+kx\in R$.

For each coset $x+\a$ where $\a\in\I_\m^+$ and $x\in R$, let 
\[
V_{(x+\a)\times\a^\times}:=\{[\mathbf{b},\bar{\mathbf{a}}]\in \Omega_R^\m : v_\p(\bar{\mathbf{a}})\geq v_\p(\a), v_\p(\mathbf{b}-x)\geq v_\p(\a) \text{ for all }\p\in\P_K^\m\}
\]
where $\a^\times:=\a\setminus\{0\}$. Then a calculation shows that $V_{(x+\a)\times\a^\times}\cap V_{(y+\b)\times\b^\times}=V_{[(x+\a)\cap (y+\b)]\times (\a\cap\b)^\times}$ where $V_\emptyset:=\emptyset$. 
By \cite[Proposition~3.4]{Bru:2019} and \cite[Corollary~2.7]{Li2:2013}, there exists a *-homomorphism $D_\lambda(P_{\m,\Gamma})\to C(\Omega_R^\m)$ such that $E_{(x+\a)\times (\a\cap R_{\m,\Gamma})}\mapsto 1_{V_{(x+\a)\times\a^\times}}$. This map is injective by \cite[Proposition~5.6.21]{CELY:2017}, and one can directly check, as was done in \cite[Proposition~5.2]{CDL:2013} for the full $ax+b$-semigroup, that this map is also surjective. Since it is also $G_{\m,\Gamma}$-equivariant, it follows that $C_r^*(G_{\m,\Gamma}\ltimes \Omega_{P_{\m,\Gamma}})\cong C^*(G_{\m,\Gamma}\ltimes\Omega_R^\m)$, and it is not difficult to see that $\lambda_{(b,a)}$ is mapped to $1_{\{(b,a)\}\times\Omega_R^\m}$ for all $(b,a)\in P_{\m,\Gamma}$.
\end{proof}

\subsubsection{Quasi-invariant measures on $\Omega_R^\m$}

The multiplicative map $R^\times\to \mathbb{N}^\times$ given by $a\mapsto N(aR)=|R/aR|$ has a unique extension to a group homomorphism $K^*\to\mathbb{Q}_+^*$ that we also denote by $N$. Let $c_N$ be the real-valued one-cocycle $G_{\m,\Gamma}\ltimes \Omega_R^\m\to \mathbb{R}$ given by $c_N((n,k),w)=\log N(k)$, so that \cite[Proposition~5.1]{Ren:1980} gives us a time evolution $\sigma^{c_N}$ on $C^*(G_{\m,\Gamma}\ltimes \Omega_R^\m)$ such that 
\begin{equation}
\sigma^{c_N}_t(f)((n,k),w)=N(k)^{it}f((n,k),w)\quad\text{ for all } f\in C_c(G_{\m,\Gamma}\ltimes \Omega_R^\m) \text{ and } t\in\mathbb{R}.
\end{equation}

\begin{lemma}\label{lem:gpdmodel}
	Under the isomorphism $\vartheta:C_\lambda^*(P_{\m,\Gamma})\cong C^*(G_{\m,\Gamma}\ltimes \Omega_R^\m)$ from Proposition~\ref{prop:gpoidmodel}, the time evolution $\sigma$ from \eqref{eqn:timeevolution} is conjugated to $\sigma^{c_N}$, that is, $\sigma^{c_N}_t=\vartheta\circ\sigma_t\circ\vartheta^{-1}$ for every $t\in \mathbb{R}$.
\end{lemma}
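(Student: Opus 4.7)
The plan is to verify the conjugation on a generating set and then extend by continuity and multiplicativity. Both $\sigma$ and $\vartheta^{-1} \circ \sigma^{c_N} \circ \vartheta$ are strongly continuous one-parameter groups of $*$-automorphisms on $C_\lambda^*(P_{\m,\Gamma})$, so the set of $t \in \mathbb{R}$ on which they agree is closed under the group operation, and the identity $\sigma_t = \vartheta^{-1} \circ \sigma^{c_N}_t \circ \vartheta$ need only be verified on a set of $t$'s that is easy to handle, on a family of elements whose linear span is dense. The natural choice is to test on the generators $\lambda_{(b,a)}$ for $(b,a) \in P_{\m,\Gamma}$, for every $t \in \mathbb{R}$.

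First, I would recall from Proposition~\ref{prop:gpoidmodel} that $\vartheta(\lambda_{(b,a)}) = 1_{\{(b,a)\} \times \Omega_R^\m}$, viewed as a compactly supported continuous function on $G_{\m,\Gamma} \ltimes \Omega_R^\m$. Applying the formula for $\sigma^{c_N}$, for any $((n,k),w) \in G_{\m,\Gamma} \ltimes \Omega_R^\m$ we have
\[
\sigma^{c_N}_t(1_{\{(b,a)\} \times \Omega_R^\m})((n,k),w) = N(k)^{it}\, 1_{\{(b,a)\} \times \Omega_R^\m}((n,k),w),
\]
and the right-hand side vanishes unless $(n,k) = (b,a)$, in which case $N(k)^{it} = N(a)^{it}$. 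Hence
\[
\sigma^{c_N}_t(1_{\{(b,a)\} \times \Omega_R^\m}) = N(a)^{it}\, 1_{\{(b,a)\} \times \Omega_R^\m} = \vartheta(N(a)^{it} \lambda_{(b,a)}) = \vartheta(\sigma_t(\lambda_{(b,a)})),
\]
which gives the desired agreement on generators.

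Since $\{\lambda_{(b,a)} : (b,a) \in P_{\m,\Gamma}\}$ generates $C_\lambda^*(P_{\m,\Gamma})$ as a C*-algebra, and both $\sigma_t$ and $\vartheta^{-1} \circ \sigma^{c_N}_t \circ \vartheta$ are $*$-automorphisms (in particular, bounded, linear, and multiplicative), they coincide on the $*$-subalgebra generated by these isometries, and then on all of $C_\lambda^*(P_{\m,\Gamma})$ by continuity. This holds for every $t \in \mathbb{R}$, so $\sigma^{c_N}_t = \vartheta \circ \sigma_t \circ \vartheta^{-1}$ as claimed. There is no substantive obstacle here beyond unwinding the definitions; the lemma is essentially a compatibility check between the two models.
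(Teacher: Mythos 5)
Your proposal is correct and follows essentially the same route as the paper: both verify $\sigma^{c_N}_t(1_{\{(b,a)\}\times\Omega_R^\m})=N(a)^{it}1_{\{(b,a)\}\times\Omega_R^\m}$ directly from the cocycle formula and then conclude by noting that the $\lambda_{(b,a)}$ generate $C_\lambda^*(P_{\m,\Gamma})$ as a C*-algebra, so two $*$-automorphisms agreeing on them must coincide. No gaps.
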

\begin{proof}
We have $\vartheta(\lambda_{(b,a)})=1_{\{(b,a)\}\times \Omega_R^\m}$ for $(b,a)\in P_{\m,\Gamma}$, and a short calculation shows that $\sigma_t^{c_N}(1_{\{(b,a)\}\times \Omega_R^\m})=N(a)^{it}1_{\{(b,a)\}\times \Omega_R^\m}$ for all $t\in\mathbb{R}$. Thus, we have $\vartheta\circ\sigma_t\circ\vartheta^{-1}(1_{\{(b,a)\}\times \Omega_R^\m})=\sigma^{c_N}_t(1_{\{(b,a)\}\times \Omega_R^\m})$ for all $t\in\mathbb{R}$. Since the collection $\{\lambda_{(b,a)} : (b,a)\in P_{\m,\Gamma}\}$ generates $C_\lambda^*(P_{\m,\Gamma})$ as a C*-algebra, this is enough.
\end{proof}

Lemma~\ref{lem:gpdmodel} implies that there is an isomorphism of C*-dynamical systems 
\[
(C_\lambda^*(P_{\m,\Gamma}),\mathbb{R},\sigma)\cong (C^*(G_{\m,\Gamma}\ltimes \Omega_R^\m),\mathbb{R},\sigma^{c_N}),
\] 
so we may work with the latter system for our computations of KMS and ground states. From now on, we will write $\sigma$ rather than $\sigma^{c_N}$ for the time evolution on $C^*(G_{\m,\Gamma}\ltimes \Omega_R^\m)$.

Any state $\phi$ on $C^*(G_{\m,\Gamma}\ltimes\Omega_R^\m)$ defines a probability measure $\mu$ on $\Omega_R^\m$ by restricting $\phi$ to $C(\Omega_R^\m)$ and then applying the Riesz representation theorem to the state $\phi\vert_{C(\Omega_R^\m)}$. It is well-known, going back to \cite[Proposition~5.4]{Ren:1980}, that if $\phi$ is a $\sigma$-KMS$_\beta$ state, then the KMS$_\beta$ condition \eqref{KMScondition} forces the measure $\mu$ to be \emph{quasi-invariant with Radon-Nikodym cocycle given by $e^{-\beta c_N}=N^{-\beta}$}, that is, $\mu$ must satisfy the scaling condition
\begin{equation}\label{eqn:qi}
\mu((n,k)Z)=N(k)^{-\beta}\mu(Z) 
\end{equation}
for all $(n,k)\in G_{\m,\Gamma}$ and Borel sets $Z\subseteq \Omega_R^\m$ such that $(n,k)Z\subseteq \Omega_R^\m$. Moreover, the set of probability measures that satisfy \eqref{eqn:qi} forms a (possibly empty) Choquet simplex, see, for example, \cite[Exercise~3.3.1]{Ren:2009}.

There may be many $\sigma$-KMS$_\beta$ states on $C^*(G_{\m,\Gamma}\ltimes\Omega_R^\m)$ that define the same quasi-invariant measure on $\Omega_R^\m$, and \cite[Theorem~1.3]{Nesh:2013} gives a parameterization of all such $\sigma$-KMS$_\beta$ states in terms of traces on the C*-algebras of certain isotropy groups. Thus, to compute the $\sigma$-KMS$_\beta$ states on $C^*(G_{\m,\Gamma}\ltimes\Omega_R^\m)$ for $\beta<\infty$, we must first compute, for each fixed $\beta\in\mathbb{R}$, the simplex of all probability measures $\mu$ on $\Omega_R^\m$ that satisfy \eqref{eqn:qi}. It is easy to see that there are no such measures for $\beta<1$, as explained in Section~\ref{subsec:part(i)} below, and for this reason we restrict to the case $\beta\geq 1$ now.

\begin{lemma}\label{lem:spanningsets}
Let $\J:=\{(x+\a)\times\a^\times : x\in R, \a\in\I_\m^+\}$, and for $(x+\a)\times\a^\times\in\J$, let
\[
V_{(x+\a)\times\a^\times}:=\{[\mathbf{b},\bar{\mathbf{a}}]\in \Omega_R^\m : v_\p(\bar{\mathbf{a}})\geq v_\p(\a), v_\p(\mathbf{b}-x)\geq v_\p(\a) \text{ for all }\p\in\P_K^\m\}.
\]
If $\mu$ is a probability measure on $\Omega_R^\m$ and $\beta\geq 1$, then $\mu$ satisfies \eqref{eqn:qi} if and only if 
\begin{equation}\label{eqn:spanningsets}
\mu((n,k)V_X)=N(k)^{-\beta}\mu(V_X) 
\end{equation}
for all $(n,k)\in G_{\m,\Gamma}$ and $X\in\J$ such that $(n,k)X=\{(n+kx,ky) : (x,y)\in X\}$ lies in $\J$.

Moreover, for $\a\in\I_\m^+$, let 
\[
U_\a:=\a\hat{R}_S/\hat{R}_S^*=\{\bar{\mathbf{a}}\in\hat{R}_S/\hat{R}_S^* : v_\p(\bar{\mathbf{a}})\geq v_\p(\a) \text{ for all } \p\in\P_K^\m\}.
\]
Then for $k\in K_{\m,\Gamma}$ and $\a\in\I_\m^+$ such that $k\a\in\I_\m^+$, we have $k U_\a=U_{k\a}$, and a probability measure $\nu$ on $\hat{R}_S/\hat{R}_S^*$ satisfies 
\begin{equation}\label{eqn:qimultiplicative-1}
\nu(kZ)=N(k)^{-(\beta-1)}\nu(Z) 
\end{equation}
for every $k\in K_{\m,\Gamma}$ and every Borel set $Z\subseteq\hat{R}_S/\hat{R}_S^*$ such that $kZ\subseteq\hat{R}_S/\hat{R}_S^*$ if and only of 
\[
\nu(kU_\a)=N(k)^{-(\beta-1)}\nu(U_\a)
\]
for all $k\in K_{\m,\Gamma}$ and $\a\in\I_\m^+$ such that $k\a\in\I_\m^+$.
\end{lemma}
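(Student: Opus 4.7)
The plan is to prove both statements in parallel, since they share the same structure: in each, one direction is a trivial specialization, and the nontrivial direction amounts to bootstrapping a measure identity from a distinguished intersection-closed basis of compact open sets up to all Borel sets in the domain of the relevant partial action.

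For the first statement, the crucial structural fact --- already noted in the proof of Proposition~\ref{prop:gpoidmodel} --- is the intersection-closure identity
\[
V_{X_1}\cap V_{X_2}=V_{X_1\cap X_2}
\]
(with the convention $V_\emptyset=\emptyset$). This exhibits $\{V_X:X\in\J\}$ as a basis of compact open sets for $\Omega_R^\m$ closed under finite intersections, so that its finite disjoint unions form a Boolean algebra generating the Borel $\sigma$-algebra. I would then propagate \eqref{eqn:spanningsets} through three steps. First, when $V_{X_1},\ldots,V_{X_m}$ are pairwise disjoint with each $(n,k)X_j\in\J$, additivity of $\mu$ together with the identity $(n,k)V_{X_j}=V_{(n,k)X_j}$ (valid whenever $(n,k)X_j\in\J$) immediately upgrades \eqref{eqn:spanningsets} to the disjoint union. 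Second, any compact open subset $W$ of the open domain
\[
D_{(n,k)}:=\{w\in\Omega_R^\m:(n,k)w\in\Omega_R^\m\}
\]
of the partial homeomorphism induced by $(n,k)$ can be written as a finite disjoint union of basic sets $V_{X_j}\subseteq D_{(n,k)}$ with $(n,k)X_j\in\J$: cover $W$ by finitely many $V_X$'s sitting inside $D_{(n,k)}$ and disjointify using the intersection-closure of the basis. Third, for a general Borel set $Z$ with $(n,k)Z\subseteq\Omega_R^\m$, outer and inner regularity of the Radon measures $\mu$ and $(n,k)_*\mu$ on the compact zero-dimensional space $\Omega_R^\m$ --- applied within $D_{(n,k)}$ --- permit passage to the limit and deliver \eqref{eqn:qi}.

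The second statement is proved by the same three-step strategy on the compact space $\hat{R}_S/\hat{R}_S^*$: the identity $U_\a\cap U_\b=U_{\a\cap\b}$ makes $\{U_\a:\a\in\I_\m^+\}$ an intersection-closed basis of clopen sets, and the identity $kU_\a=U_{k\a}$ asserted in the lemma transports the scaling along the partial action of $K_{\m,\Gamma}$ exactly as in the first part.

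The main technical obstacle, common to both parts, is the bookkeeping for the restricted domains of the partial actions: the refinement used in the additivity step must remain inside the set on which the given element acts. This is handled by exploiting that $\{V_X\}$ and $\{U_\a\}$ are fine enough to sit inside any prescribed open subset of their respective ambient spaces, so that compact sets contained in the domain can always be approximated by basic clopen sets that also lie in the domain.
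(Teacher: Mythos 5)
Your overall skeleton (one direction is a trivial specialization; the other propagates the scaling identity from the generating family $\{V_X\}$, resp.\ $\{U_\a\}$, to all Borel sets) matches the paper's, but the propagation step as you describe it has a genuine gap. The family $\{V_X : X\in\J\}$ is \emph{not} a basis for the topology of $\Omega_R^\m$, and its finite disjoint unions do \emph{not} form a Boolean algebra: these sets are upward closed in the divisibility direction and cannot isolate a region where a valuation is bounded above. Concretely, for $\p\in\P_K^\m$ the set $W=\{[\mathbf{b},\bar{\mathbf{a}}]\in\Omega_R^\m : v_\p(\bar{\mathbf{a}})=1\}$ is compact open (the coordinate $v_\p$ lives in the one-point compactification $\mathbb{N}\cup\{\infty\}$, in which $\{1\}$ is clopen), yet any $V_{(x+\a)\times\a^\times}$ meeting $W$ has $v_\p(\a)\leq 1$ and therefore also contains points with $v_\p(\bar{\mathbf{a}})=2$; so no $V_X$ is contained in $W$, and $W$ is not a union --- let alone a finite disjoint union --- of basic sets. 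The same phenomenon defeats the claim for $\{U_\a\}$ in $\hat{R}_S/\hat{R}_S^*$. Consequently your second step (``cover $W$ by finitely many $V_X$'s sitting inside $D_{(n,k)}$ and disjointify'') cannot be carried out, and the regularity argument of your third step has nothing to feed on.

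The correct mechanism is not a topological decomposition but a uniqueness theorem for measures. One checks that $D_{(n,k)}$ is either empty or itself of the form $V_{X_0}$, so that $\{V_X\cap D_{(n,k)} : X\in\J\}=\{V_{X\cap X_0} : X\in\J\}$ is a $\pi$-system, closed under intersection by the identity you quote, on which the two finite Borel measures $Z\mapsto\mu((n,k)Z)$ and $Z\mapsto N(k)^{-\beta}\mu(Z)$ agree (using $(n,k)V_Y=V_{(n,k)Y}$ and the fact that $V_Y\subseteq D_{(n,k)}$ forces $(n,k)Y\in\J$); since this $\pi$-system contains $D_{(n,k)}$ and generates its Borel $\sigma$-algebra, Dynkin's theorem yields \eqref{eqn:qi}. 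Equivalently --- and this is the paper's one-line argument --- $\spn(\{1_{V_X} : X\in\J\}\cup\{0\})$ is a dense $*$-subalgebra of $C(\Omega_R^\m)$ (it is closed under multiplication by the intersection identity, contains the constants, and separates points, so Stone--Weierstrass applies; it is exactly here that the $V_X$ suffice even though they are not a basis), whence the two measures above integrate a dense subspace of $C(D_{(n,k)})$ identically and coincide by uniqueness in the Riesz representation theorem. With either replacement for your second and third steps, the remainder of your outline (the trivial direction, the identity $(n,k)V_X=V_{(n,k)X}$, and the parallel treatment of $U_\a$ via $kU_\a=U_{k\a}$) goes through.
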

\begin{proof}
A calculation shows that $V_X$ is the support of the projection $\vartheta(E_X)\in C(\Omega_R^\m)$. The result follows from the fact that the projections $\{\vartheta(E_X) : X\in\J\}\cup\{0\}$ span a dense sub-*-algebra of $C(\Omega_R^\m)$.

A short calculation shows that for $k\in K_{\m,\Gamma}$ and $\a\in\I_\m^+$ such that $k\a\in\I_\m^+$, we have $k U_\a=U_{k\a}$. The last claim follows from the fact that the projections $\{1_{U_\a} : \a\in\I_\m^+\}$ span a dense sub-*-algebra of $C(\hat{R}_S/\hat{R}_S^*)$.
\end{proof}

Our next result is inspired by the proof of \cite[Proposition~2.1]{LN:2011} and \cite[Section~3]{Nesh:2013}. 

\begin{proposition}\label{prop:disintegration}
Let $\pi$ denote the quotient map $\hat{R}_S\times\hat{R}_S/\hat{R}_S^*\to\Omega_R^\m$, and let $m$ denote the normalized Haar measure on $\hat{R}_S$. Given a probability measure $\nu$ on $\hat{R}_S/\hat{R}_S^*$, form the product measure $m\times\nu$, and let $\pi_*(m\times\nu)$ denote the probability measure on $\Omega_R^\m$ obtained by pushing forward $m\times\nu$ under $\pi$.
For each fixed $\beta\geq 1$, the map $\nu\mapsto \pi_*(m\times\nu)$ defines an affine bijection from the set of probability measures $\nu$ on $\hat{R}_S/\hat{R}_S^*$ satisfying Equation~\eqref{eqn:qimultiplicative-1} onto the set of probability measures on $\Omega_R^\m$ satisfying \eqref{eqn:qi}. 
\end{proposition}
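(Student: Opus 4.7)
My plan is to verify everything on the generating family $\{V_{(x+\a)\times\a^\times}\}$, appealing to Lemma~\ref{lem:spanningsets} at both ends to pass between measures and their values on this family. Unpacking the definitions gives the basic identity $\pi^{-1}(V_{(x+\a)\times\a^\times}) = (x+\a\hat{R}_S)\times U_\a$, and since $m(x+\a\hat{R}_S) = N(\a)^{-1}$, this yields the computational backbone
\[
\pi_*(m\times\nu)(V_{(x+\a)\times\a^\times}) = N(\a)^{-1}\nu(U_\a).
\]
Well-definedness of $\nu\mapsto\pi_*(m\times\nu)$ reduces, via $(n,k)V_{(x+\a)\times\a^\times} = V_{(n+kx+k\a)\times(k\a)^\times}$ and $kU_\a = U_{k\a}$, to the identity $N(k\a)^{-1}\nu(kU_\a) = N(k)^{-\beta}N(\a)^{-1}\nu(U_\a)$, which is exactly \eqref{eqn:qimultiplicative-1}. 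Affineness is clear, and injectivity drops out by specialising $x=0$ in the backbone formula and noting that the values $\nu(U_\a)$ for $\a\in\I_\m^+$ determine $\nu$.

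The substance of the proposition is surjectivity. Given a probability measure $\mu$ on $\Omega_R^\m$ satisfying \eqref{eqn:qi}, I will define $\nu := p_*\mu$ using the well-defined continuous projection $p\colon \Omega_R^\m\to\hat{R}_S/\hat{R}_S^*$, $[\mathbf{b},\bar{\mathbf{a}}]\mapsto\bar{\mathbf{a}}$. The Chinese Remainder isomorphism $R/\a\cong\hat{R}_S/\a\hat{R}_S$ supplies the finite disjoint decomposition
\[
p^{-1}(U_\a) = \bigsqcup_{x+\a\,\in\,R/\a} V_{(x+\a)\times\a^\times},
\]
so $\nu(U_\a) = \sum_{x+\a\in R/\a} \mu(V_{(x+\a)\times\a^\times})$. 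Writing $V_{(x+\a)\times\a^\times} = (x,1)V_{\a\times\a^\times}$ and $V_{(y+k\a)\times(k\a)^\times} = (y,k)V_{\a\times\a^\times}$ and invoking \eqref{eqn:qi} through Lemma~\ref{lem:spanningsets}, the additive part forces every summand in the displayed sum to equal $\mu(V_{\a\times\a^\times})$, giving $\nu(U_\a) = N(\a)\mu(V_{\a\times\a^\times})$. The analogous computation for $k\a$ produces
\[
\nu(U_{k\a}) = N(k\a)\cdot N(k)^{-\beta}\mu(V_{\a\times\a^\times}) = N(k)^{-(\beta-1)}\nu(U_\a),
\]
so $\nu$ satisfies \eqref{eqn:qimultiplicative-1}. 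Finally, $\pi_*(m\times\nu) = \mu$ is checked on generators directly from the backbone formula together with the additive invariance of $\mu$: $\pi_*(m\times\nu)(V_{(x+\a)\times\a^\times}) = N(\a)^{-1}\nu(U_\a) = \mu(V_{\a\times\a^\times}) = \mu(V_{(x+\a)\times\a^\times})$.

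The hardest step is verifying that $\nu = p_*\mu$ satisfies \eqref{eqn:qimultiplicative-1}: one must cleanly separate the Haar-like behaviour of $\mu$ in the ``additive direction'' (forced by $(n,1)$-invariance) from its explicit scaling under $(0,k)$, so that the overall factor of $N(\a)$ produced by summing over $R/\a$ combines with $N(k\a) = N(k)N(\a)$ to strip off exactly one power of $N(k)$ and convert the exponent $\beta$ into $\beta-1$. Once this balance is checked, the recovery $\pi_*(m\times\nu) = \mu$ is essentially automatic, because the Haar factor $m$ simply encodes the additive-averaging that $\mu$ already possesses by quasi-invariance.
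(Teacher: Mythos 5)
Your proposal is correct and follows essentially the same route as the paper's proof: the same backbone identity $\pi_*(m\times\nu)(V_{(x+\a)\times\a^\times})=N(\a)^{-1}\nu(U_\a)$, the same reduction to the generating family via Lemma~\ref{lem:spanningsets}, and for surjectivity the same construction $\nu=p_*\mu$ together with the decomposition of $p^{-1}(U_\a)$ over $R/\a$ and the translation-invariance of $\mu$. No gaps.
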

\begin{proof}
 Suppose that $\nu$ is a probability measure on $\hat{R}_S/\hat{R}_S^*$ satisfying \eqref{eqn:qimultiplicative-1}, and let $\mu:=\pi_*(m\times\nu)$. 
We need to show that $\mu$ satisfies \eqref{eqn:qi}. By Lemma~\ref{lem:spanningsets}, it suffices to show that $\mu$ satisfies \eqref{eqn:spanningsets}.

If $(n,k)\in G_{\m,\Gamma}$ and $X=(x+\a)\times\a^\times\in \J$ are such that $(n,k)X\in \J$, then $(n+kx+k\a\hat{R}_S)\times U_{k\a}=\pi^{-1}(V_{(n+kx+k\a)\times (ka)^\times})$, so we have
\[
\mu((n,k)V_X)=m(n+kx+k\a\hat{R}_S)\nu(U_{k\a})=N(k\a)^{-1}N(k)^{-(\beta-1)}\nu(U_\a)=N(k)^{-\beta}N(\a)^{-1}\nu(U_\a).
\]
For every $x\in R$,
\[
\mu(V_{(x+\a)\times\a^\times})=m(x+\a\hat{R}_S)\nu(U_\a)=N(\a)^{-1}\nu(U_\a),
\]
so we see that $\mu$ satisfies \eqref{eqn:spanningsets}. Since $\nu(U_\a)=N(\a)\mu(V_{(x+\a)\times\a^\times})$, and $\nu$ is determined by its values on the sets $U_\a$, we also conclude that the map $\nu\mapsto \pi_*(m\times\nu)$ is injective. 

It remains to check surjectivity. Suppose $\mu$ is a probability measure on $\Omega_R^\m$ satisfying \eqref{eqn:qi}, and let $q:\Omega_R^\m\to \hat{R}_S/\hat{R}_S^*$ denote the surjective map given by $q([\mathbf{b},\bar{\mathbf{a}}])=\bar{\mathbf{a}}$, so that $q\circ \pi=\pi_2$ is the projection from $\hat{R}_S\times\hat{R}_S/\hat{R}_S^*$ onto the second coordinate.
To show surjectivity, it is enough to show that
\begin{enumerate}
		\item $q_*\mu$ satisfies \eqref{eqn:qimultiplicative-1};
		\item $\mu=\pi_*(m\times q_*\mu)$. 
\end{enumerate} 
For $\a\in\I_\m^+$, let $V_{R\times\a^\times}:=\bigsqcup_{y\in R/\a}V_{(y+\a)\times\a^\times}$. Since $\mu$ satisfies \eqref{eqn:qi}, 
\begin{equation}\label{eqn:VR}
\mu(V_{R\times\a^\times})= \sum_{y\in R/\a}\mu(V_{(y+\a)\times\a^\times})=N(\a)\mu(V_{\a\times\a^\times}).
\end{equation}

Let $k\in K_{\m,\Gamma}$ and $\a\in\I_\m^+$ be such that $k\a\in\I_\m^+$. Then $q^{-1}(U_{k\a})=V_{R\times(k\a)^\times}$, so
\begin{align*}
q_*\mu(U_{k\a})=\mu(V_{R\times(k\a)^\times})&=N(k\a)\mu(V_{k\a\times(k\a)^\times})\\
&=N(k\a)N(k)^{-\beta}\mu(V_{\a\times\a^\times})\quad \text{(using that $\mu$ satisfies \eqref{eqn:qi})}\\
&=N(k)^{-(\beta-1)}\mu(V_{R\times\a^\times})\\
&=N(k)^{-(\beta-1)}q_*\mu(U_\a).
\end{align*}
Thus 1. holds. To show 2., it suffices to show that $\mu(V_{(x+\a)\times\a^\times})=\pi_*(m\times q_*\mu)(V_{(x+\a)\times\a^\times})$ for all $(x+\a)\times\a^\times\in\ \J$.
We have 
\[
\pi_*(m\times q_*\mu)(V_{(x+\a)\times\a^\times})=(m\times q_*\mu)((x+\a\hat{R}_S)\times U_\a)=N(\a)^{-1}q_*\mu(U_\a)=N(\a)^{-1}\mu(V_{R\times\a^\times}).
\]
Using \eqref{eqn:VR} and \eqref{eqn:qi}, we have $\mu(V_{R\times\a^\times})=N(\a)\mu(V_{\a\times\a^\times})=N(\a)\mu(V_{(x+\a)\times\a^\times})$. Hence, $\mu(V_{(x+\a)\times\a^\times})=\pi_*(m\times q_*\mu)(V_{(x+\a)\times\a^\times})$ for all $(x+\a)\times\a^\times\in\ \J$, as desired. 

It is not difficult to check that the map $\nu\mapsto m\times\nu$ is affine, and since the push-forward map $m\times\nu\mapsto\pi_*(m\times\nu)$ is also affine, we see that $\nu\mapsto\pi_*(m\times\nu)$ is affine.
\end{proof}

\subsection{The easy case: part (i)}\label{subsec:part(i)}

\begin{proof}[Proof of Theorem~\ref{thm:phasetransitions}(i)]
Suppose that $\phi$ is a $\sigma$-KMS$_\beta$ state on $C_\lambda^*(P_{\m,\Gamma})$. For each $x\in R$ and each $a\in R_{\m,\Gamma}$, the KMS$_\beta$ condition \eqref{KMScondition} yields
\[
\phi(\lambda_{(x,a)}\lambda_{(x,a)}^*)=N(a)^{-\beta}\phi(\lambda_{(x,a)}^*\lambda_{(x,a)})=N(a)^{-\beta}.
\]
Hence,
\[
0\leq \phi(1-\sum_{x\in R/aR}\lambda_{(x,a)}\lambda_{(x,a)}^*)=1-\sum_{x\in R/aR}\phi(\lambda_{(x,a)}\lambda_{(x,a)}^*)=1-N(a)^{1-\beta},
\]
so we must have $\beta \geq 1$. 
\end{proof}

\subsection{Uniqueness in the critical interval and the proof of part (ii)}\label{subsec:part(ii)}

Let $\nu_0:=\delta_{\bar{\mathbf{0}}}$ be the unit mass concentrated at the point $\bar{\mathbf{0}}\in\hat{R}_S/\hat{R}_S^*$. For each $\beta\in (0,\infty)$, let $\nu_{\beta,\p}$ be the probability measure on $\hat{ R}_\p/\hat{ R}_\p^*\cong\p^{\mathbb{N}\cup\{\infty\}}$ given by $\nu_{\beta,\p}=(1-N(\p)^{-\beta})\sum_{n=0}^\infty N(\p)^{-n\beta}\delta_{\p^n}$, and let $\nu_{\beta}:=\prod_{\p\in \P_K^\m}\nu_{\beta,\p}$. 

\begin{lemma}
For each $\beta\in [0,\infty)$, the measure $\nu_{\beta}$ satisfies 
\begin{equation}\label{eqn:qimultiplicative}
\nu(kZ)=N(k)^{-\beta}\nu(Z) 
\end{equation}
for every $k\in K_{\m,\Gamma}$ and every Borel set $Z\subseteq\hat{R}_S/\hat{R}_S^*$ such that $kZ\subseteq\hat{R}_S/\hat{R}_S^*$. 
Moreover, $\nu_\beta(U_\a)=N(\a)^{-\beta}$ for all $\a\in\I_\m^+$ where $U_\a=\a\hat{R}_S/\hat{R}_S^*$.
\end{lemma}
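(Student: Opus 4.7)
The plan is to verify both assertions on the basic sets $U_\a$ ($\a \in \I_\m^+$) and then invoke essentially the same argument used in the last part of Lemma~\ref{lem:spanningsets}, which works verbatim with the exponent $\beta-1$ replaced by $-\beta$: since $\{1_{U_\a} : \a \in \I_\m^+\}$ spans a dense sub-*-algebra of $C(\hat{R}_S/\hat{R}_S^*)$, checking the scaling identity on these generating sets suffices. The whole proof is therefore reduced to a direct computation on the $U_\a$.

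I would first handle the formula $\nu_\beta(U_\a) = N(\a)^{-\beta}$. For $\beta > 0$, use the identification $\hat{R}_S/\hat{R}_S^* \cong \prod_{\p \in \P_K^\m} R_\p/R_\p^*$, under which $U_\a$ factors as $\prod_\p \{\p^n : n \geq v_\p(\a)\}$. Then by the product-measure definition of $\nu_\beta$ and summation of a geometric series,
$$\nu_\beta(U_\a) = \prod_{\p} (1 - N(\p)^{-\beta}) \sum_{n=v_\p(\a)}^\infty N(\p)^{-n\beta} = \prod_\p N(\p)^{-\beta v_\p(\a)} = N(\a)^{-\beta},$$
where the last step uses multiplicativity of the norm and the fact that only finitely many factors differ from $1$. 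The case $\beta = 0$ is immediate from $\nu_0 = \delta_{\bar{\mathbf{0}}}$: since $v_\p(0) = \infty \geq v_\p(\a)$, we have $\bar{\mathbf{0}} \in U_\a$ for every $\a \in \I_\m^+$, so $\nu_0(U_\a) = 1 = N(\a)^0$.

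For the scaling identity, take $k \in K_{\m,\Gamma}$ and $\a \in \I_\m^+$ with $k\a \in \I_\m^+$. Lemma~\ref{lem:spanningsets} records that $kU_\a = U_{k\a}$, hence
$$\nu_\beta(k U_\a) = \nu_\beta(U_{k\a}) = N(k\a)^{-\beta} = N(k)^{-\beta} N(\a)^{-\beta} = N(k)^{-\beta} \nu_\beta(U_\a),$$
for every $\beta \geq 0$; at $\beta = 0$ this is anyway forced by $k \cdot \bar{\mathbf{0}} = \bar{\mathbf{0}}$, which makes $\nu_0$ fully $K_{\m,\Gamma}$-invariant on translations that stay inside $\hat{R}_S/\hat{R}_S^*$. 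I do not anticipate any real obstacle: the whole argument is driven by the observation that the local measures $\nu_{\beta,\p}$ are tailored precisely so that the geometric series collapses to $N(\p)^{-\beta v_\p(\a)}$, after which the multiplicativity of $N$ and the product structure of $\nu_\beta$ do all the work.
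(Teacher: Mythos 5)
Your proposal is correct and follows essentially the same route as the paper: reduce to the generating sets $U_\a$ via the density argument from Lemma~\ref{lem:spanningsets}, compute $\nu_\beta(U_\a)=N(\a)^{-\beta}$ by collapsing the local geometric series, and conclude via $kU_\a=U_{k\a}$ and multiplicativity of the norm, with $\beta=0$ handled separately through $\bar{\mathbf{0}}\in U_\a$. The only difference is that you spell out the "calculation" the paper leaves implicit, which is a harmless elaboration.
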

\begin{proof}
As pointed out in the proof of Proposition~\ref{prop:disintegration}, to show $\nu_\beta$ satisfies \eqref{eqn:qimultiplicative}, it suffices to show that $\nu_\beta$ satisfies
\[
\nu_\beta(kU_\a)=N(k)^{-\beta}\nu_\beta(U_\a)
\]
for all $k\in K_{\m,\Gamma}$ and $\a\in\I_\m^+$ such that $k\a\in\I_\m^+$.
Since $\bar{\mathbf{0}}\in U_\a$ for all $\a\in\I_\m^+$, it is easy to see that $\nu_0$ satisfies this condition. Now let $\beta\in (0,\infty)$. For any $\b\in\I_\m^+$, a calculation shows that $\nu_\beta(U_\b)=N(\b)^{-\beta}$ which settles the second claim. Using this, we have
\[
\nu_\beta(kU_\a)=\nu_\beta(U_{k\a})=N(k\a)^{-\beta}=N(k)^{-\beta}N(\a)^{-\beta}=N(k)^{-\beta}\nu_\beta(U_\a)
\]
as desired.
\end{proof}

The crux in computing the KMS$_\beta$ states for $\beta\in[1,2]$ is the following purely measure-theoretic result.

\begin{theorem}\label{thm:uniquemeasure}
For each $\beta\in [0,1]$, $\nu_\beta$ is the unique probability measure on $\hat{R}_S/\hat{R}_S^*$ satisfying \eqref{eqn:qimultiplicative}.
\end{theorem}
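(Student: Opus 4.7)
For $\beta = 0$, the scaling degenerates to $K_{\m,\Gamma}$-invariance: taking $k = a \in R_{\m,\Gamma}$ and $Z = U_R = \hat R_S/\hat R_S^*$ gives $\nu(U_{aR}) = 1$ for every $a \in R_{\m,\Gamma}$. The Chinese Remainder Theorem in $R_\m$ produces elements of $R_{\m,\Gamma}$ with arbitrarily large valuation at any prescribed $\p \in \P_K^\m$, so $\bigcap_{a \in R_{\m,\Gamma}} U_{aR} = \{\bar{\mathbf 0}\}$ and $\nu = \delta_{\bar{\mathbf 0}} = \nu_0$.

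For $\beta \in (0,1]$, the scaling combined with $kU_\a = U_{k\a}$ (Lemma~\ref{lem:spanningsets}) shows that $F(\k) := N(\a)^\beta\nu(U_\a)$ depends only on $\k = [\a] \in G := \I_\m/i(K_{\m,\Gamma})$, defining $F : G \to [0,\infty)$ with $F([R]) = 1$. Since the values $\nu(U_\a)$ determine $\nu$ by Lemma~\ref{lem:spanningsets}, proving $F \equiv 1$ suffices. For each finite $T \subseteq \P_K^\m$ and each character $\chi$ of $G$, inclusion-exclusion applied to the sets $V_\a^T := U_\a \setminus \bigcup_{\p \in T}U_{\a\p}$ using $\nu(U_\b) = N(\b)^{-\beta}F([\b])$, followed by Fourier expansion $F = \sum_{\chi'\in \hat G}\hat F(\chi')\chi'$ and a prime-by-prime multiplicativity computation, yields the key identity
\[
\sum_{\a \in \I_\m^+(T)} \chi(\a)\,\nu(V_\a^T) \;=\; \sum_{\chi' \in \hat G} \hat F(\chi') \prod_{\p \in T}\frac{1 - \chi'(\p) N(\p)^{-\beta}}{1 - \chi\chi'(\p) N(\p)^{-\beta}},
\]
where $\I_\m^+(T) := \{\a \in \I_\m^+ : \textup{supp}(\a) \subseteq T\}$ and $\hat F(\chi') := \tfrac{1}{|G|}\sum_\k F(\k)\overline{\chi'(\k)}$. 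Since $\{V_\a^T\}_{\a \in \I_\m^+(T)}$ partitions $\hat R_S/\hat R_S^*$ up to a $\nu$-null set (the condition $\beta > 0$ guarantees that $\{v_\p = \infty\}$ is negligible, as $\nu(U_{\p^n}) = N(\p)^{-n\beta}F([\p^n]) \to 0$), the left-hand side is bounded in modulus by $1$.

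Letting $T$ exhaust $\P_K^\m$, I analyze the summands on the right. For non-trivial $\chi$, the $\chi' = \bar\chi$ summand is $\hat F(\bar\chi)\prod_\p (1 - \bar\chi(\p)N(\p)^{-\beta})/(1 - N(\p)^{-\beta})$; its modulus grows to $+\infty$, since to leading order its logarithm is $2\sum_\p (1 - \textup{Re}\,\bar\chi(\p))N(\p)^{-\beta}$, and this partial sum diverges because the Chebotarev density theorem (equivalently, the Prime Ideal Theorem for Hecke characters, which uses the non-vanishing of $L(1,\chi)$ for non-trivial characters $\chi$ of $G$) makes the average of $1 - \textup{Re}\,\bar\chi(\p)$ strictly positive, while $\sum_\p N(\p)^{-\beta} = \infty$ for $\beta \leq 1$. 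The $\chi' = \mathbf 1$ summand tends to $0$ by a symmetric computation, and for each remaining $\chi'$ (so both $\chi'$ and $\chi\chi'$ are non-trivial) the corresponding factor stays of a strictly lower order, since the weighted prime sums $\sum_\p \chi'(\p)N(\p)^{-\beta}$ grow strictly slower than $\sum_\p N(\p)^{-\beta}$ by the same Prime Ideal Theorem. Comparing growth rates, the boundedness of the LHS forces $\hat F(\bar\chi) = 0$ for every non-trivial $\chi$; combined with $F([R]) = 1$, this gives $F \equiv 1$, hence $\nu(U_\a) = N(\a)^{-\beta} = \nu_\beta(U_\a)$ for every $\a \in \I_\m^+$, so $\nu = \nu_\beta$.

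\textbf{Main obstacle.} The delicate part is the asymptotic comparison of these partial Euler products as $T$ exhausts $\P_K^\m$. At $\beta = 1$, it reduces to Landau's classical dichotomy between the simple pole of $\zeta_{K,\m}(s)$ at $s = 1$ and the finiteness and non-vanishing of $L(1,\chi)$ for non-trivial Hecke characters $\chi$. For $\beta \in (0,1)$, one is evaluating at a point inside the critical strip and must invoke effective forms of the Prime Ideal Theorem with Hecke characters to ensure that the $\chi' = \bar\chi$ contribution strictly dominates the other non-trivial ones; this is precisely where the hypothesis that $\chi$ factors through the finite quotient $G$ of the ray class group (so that the associated Hecke $L$-functions enjoy the necessary analytic control on $\textup{Re}\,s = 1$) is used.
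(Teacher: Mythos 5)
Your proposal is correct in outline, but it follows a genuinely different route from the paper. The paper first extends $\nu$ to a measure on $\I_\m/i(K_{\m,\Gamma})\times \hat{R}_S/\hat{R}_S^*$ that scales under the whole ideal group $\I_\m$ (Lemma~\ref{lem:1-1}), then proves in Proposition~\ref{prop:ergodicity} that \emph{every} such measure is ergodic for the partial $\I_\m$-action, by computing the $L^2$-projection onto invariant functions on cylinder sets and showing its norm is a ratio of partial Euler products tending to $0$; uniqueness then falls out of the simplex structure, since a simplex all of whose points are extreme is a singleton. You instead run a direct Bost--Connes-style computation of the values $\nu(U_\a)$: Fourier analysis on the finite group $\I_\m/i(K_{\m,\Gamma})$ plus inclusion--exclusion yields your key identity, whose left side is bounded while the $\chi'=\bar\chi$ summand on the right diverges unless $\hat F(\bar\chi)=0$. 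Your identity and the bound $|\mathrm{LHS}|\le 1$ check out, and the argument does close, but note two points where your sketch is thinner than it looks. First, to conclude from boundedness you must dominate the ``mixed'' summands (both $\chi'$ and $\chi\chi'$ non-trivial), and this genuinely requires the Prime Ideal Theorem for classes in $\I_\m/i(K_{\m,\Gamma})$ (equivalently $L(1,\chi)\neq 0$): if some $L(1,\chi'')$ vanished, $\sum_{N(\p)\le x}\chi''(\p)N(\p)^{-1}$ could be of the same order as $\sum_{N(\p)\le x}N(\p)^{-1}$ and the domination would fail. The paper's route is more economical here --- Proposition~\ref{prop:ergodicity} uses only the \emph{finiteness} of $L(\chi,\beta)$ as $\beta\to 1^+$ against the pole of $\zeta_K$, together with monotonicity in $\beta$ to cover $(0,1)$, and never needs non-vanishing for this theorem (the PIT enters the paper only later, for the type computation). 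Second, your ``leading order of the logarithm'' step has error terms $\sum_\p N(\p)^{-2\beta}$ that diverge for $\beta\le 1/2$; one should expand the logarithm fully and take $T=\{\p: N(\p)\le x\}$ with partial summation, whereupon the higher-order terms only reinforce the divergence of the dominant summand and stay of lower order in the others. These are fillable details rather than gaps. What your approach buys is a self-contained computation of $\nu(U_\a)=N(\a)^{-\beta}$ without the extension lemma or the ergodic-theoretic framework; what the paper's buys is a weaker analytic input and an ergodicity statement that is reused in Section~\ref{sec:typeIII1} and Section~\ref{sec:multiplicativemonoid}.
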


To prove Theorem~\ref{thm:uniquemeasure}, we will expand on an idea of Neshveyev's from the end of \cite[Section~3]{Nesh:2013}, which will put us in a setting where we can employ techniques analogous to those used for Bost--Connes type systems. 

We need two preliminary results. The first puts us in a situation where we can work with the lattice group $\I_\m$ of all fractional ideals coprime to $\m_0$, rather than the more complicated group $K_{\m,\Gamma}$. The following result is motivated by the general techniques from \cite{LN:2004} on extending KMS weights. 

\begin{lemma}\label{lem:1-1}
View $\hat{R}_S/\hat{R}_S^*$ as a subset of $\I_\m/i(K_{\m,\Gamma})\times \hat{R}_S/\hat{R}_S^*$ via the identification $\hat{R}_S/\hat{R}_S^*\simeq \{[R]\}\times \hat{R}_S/\hat{R}_S^*$. Then each probability measure $\nu$ on $\hat{R}_S/\hat{R}_S^*$ satisfying \eqref{eqn:qimultiplicative} has a unique extension to a finite measure $\tilde{\nu}$ on $\I_\m/i(K_{\m,\Gamma})\times \hat{R}_S/\hat{R}_S^*$ satisfying 
\begin{equation}\label{eqn:qimultiplicativetilde}
	\tilde{\nu}(\a Z)=N(\a)^{-\beta}\tilde{\nu}(Z)
\end{equation}
for all $\a\in \I_\m$ and Borel sets $Z\subseteq \I_\m/i(K_{\m,\Gamma})\times \hat{R}_S/\hat{R}_S^*$ such that $\a Z\subseteq \I_\m/i(K_{\m,\Gamma})\times \hat{R}_S/\hat{R}_S^*$ where $\a Z=\{(\a\k,\a\bar{\mathbf{a}}) : (\k,\bar{\mathbf{a}})\in Z\}$.
\end{lemma}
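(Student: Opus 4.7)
The plan is to extend $\nu$ class-by-class, using that $\I_\m/i(K_{\m,\Gamma})$ is a finite group whose every class contains integral representatives. First I would choose, for each class $\k \in \I_\m/i(K_{\m,\Gamma})$, an integral ideal $\b_\k \in \I_\m^+$ with $[\b_\k] = \k^{-1}$ and $\b_{[R]} = R$; such a choice is possible because the image of $\I_\m^+$ in the finite group $\I_\m/i(K_{\m,\Gamma})$ is a subsemigroup, hence a subgroup, equal to the whole group. Then define
\[
\tilde\nu(\{\k\} \times B) := N(\b_\k)^\beta\, \nu(\b_\k B)
\]
for Borel $B \subseteq \hat R_S/\hat R_S^*$, and sum over the finitely many classes $\k$. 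Since $\b_\k$ is integral, multiplication by $\b_\k$ sends $\hat R_S/\hat R_S^*$ into itself, so the right-hand side is well-defined and bounded by $N(\b_\k)^\beta$; in particular $\tilde\nu$ is a finite Borel measure. Taking $\b_{[R]} = R$ makes $\tilde\nu$ agree with $\nu$ on the $[R]$-slice, giving the extension property.

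Next I would verify that the definition is independent of the choice of representative: if $\b, \b'$ are two integral ideals with $[\b] = [\b'] = \k^{-1}$, then $\b\b'^{-1} = xR$ for some $x \in K_{\m,\Gamma}$, so $\b = x\b'$, and the identity $x(\b' B) = \b B \subseteq \hat R_S/\hat R_S^*$ together with the scaling property \eqref{eqn:qimultiplicative-1} for $\nu$ gives
\[
N(\b)^\beta \nu(\b B) = N(x)^\beta N(\b')^\beta N(x)^{-\beta} \nu(\b' B) = N(\b')^\beta \nu(\b' B).
\]

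The main step is then the scaling condition \eqref{eqn:qimultiplicativetilde}. For integral $\a \in \I_\m^+$ and $Z = \{\k\} \times B$ with $\a Z \subseteq \I_\m/i(K_{\m,\Gamma}) \times \hat R_S/\hat R_S^*$, the product $\b_{\a\k}\a$ is integral and lies in $\k^{-1}$, so by choice-independence it may be used in place of $\b_\k$ to compute $\tilde\nu$ on the $\k$-slice:
\[
\tilde\nu(\a Z) = \tilde\nu(\{\a\k\} \times \a B) = N(\b_{\a\k})^\beta \nu((\b_{\a\k}\a) B) = \frac{N(\b_{\a\k})^\beta}{N(\b_{\a\k}\a)^\beta}\,\tilde\nu(\{\k\} \times B) = N(\a)^{-\beta}\tilde\nu(Z).
\]
For general $\a \in \I_\m$, write $\a = \a_1 \a_2^{-1}$ with $\a_1, \a_2 \in \I_\m^+$; the identity $\a_2(\a Z) = \a_1 Z$ combined with the integral case applied to both $\a_1$ and $\a_2$ yields the scaling for $\a$.

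Uniqueness is immediate: any extension $\tilde\nu'$ satisfying \eqref{eqn:qimultiplicativetilde} must, by scaling at $\a = \b_\k$ together with $\tilde\nu'|_{\{[R]\} \times \hat R_S/\hat R_S^*} = \nu$, satisfy $\tilde\nu'(\{\k\} \times B) = N(\b_\k)^\beta \nu(\b_\k B) = \tilde\nu(\{\k\} \times B)$. I do not anticipate any significant obstacle; the only point requiring care is that every $\b_\k$ be integral, so that multiplication by $\b_\k$ stays inside $\hat R_S/\hat R_S^*$ at each invocation of the scaling property of $\nu$.
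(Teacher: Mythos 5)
Your proof is correct and follows essentially the same route as the paper's: both extend $\nu$ class by class via $\tilde\nu(\{\k\}\times B)=N(\b)^{\beta}\nu(\b B)$ for an integral ideal $\b$ in the class $\k^{-1}$, using the scaling property of $\nu$ to verify well-definedness, the extension property, and \eqref{eqn:qimultiplicativetilde}; your reduction of general $\a\in\I_\m$ to the integral case and your choice-independence step are just a slightly different organization of the same computation. The only slip is the citation of \eqref{eqn:qimultiplicative-1} where the hypothesis on $\nu$ is \eqref{eqn:qimultiplicative} (exponent $-\beta$), which is what your computation actually uses.
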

\begin{proof}
Our proof is similar to that of \cite[Lemma~2.2]{LLN:2007}. For $\a\in\I_\m$, let $[\a]$ denote the class of $\a$ in $\I_\m/i(K_{\m,\Gamma})$, and for each integral ideal $\a$, let $Y_\a:= \{[R]\}\times U_\a$, so that $\hat{R}_S/\hat{R}_{S}^*\simeq Y_R\subseteq\I_\m/i(K_{\m,\Gamma})\times \hat{R}_S/\hat{R}_S^*$.

Suppose that $\nu$ is a probability measure on $\hat{R}_S/\hat{R}_S^*$ satisfying \eqref{eqn:qimultiplicative}. We first show that there can be at most one measure $\mu$ on $\I_\m/i(K_{\m,\Gamma})\times \hat{R}_S/\hat{R}_S^*$ that both satisfies \eqref{eqn:qimultiplicativetilde} and extends $\nu$. Indeed, suppose that $\mu$ is such a measure, and for each class $\k\in \I_\m/i(K_{\m,\Gamma})$, choose an integral ideal $\a_\k\in\k$; for $\k=[R]$, take $\a_\k=R$. Then
\[
	\I_\m/i(K_{\m,\Gamma})\times \hat{R}_S/\hat{R}_S^*=\bigsqcup_{\k\in \I_\m/i(K_{\m,\Gamma})}\a_\k^{-1}Y_{\a_\k},
\]
so, for any Borel set $Z$,
\[
	\mu(Z)=\sum_{\k\in \I_\m/i(K_{\m,\Gamma})}\mu(Z\cap \a_\k^{-1}Y_{\a_\k}).
\]
Since $\mu$ satisfies \eqref{eqn:qimultiplicativetilde}, $\mu(Z\cap \a_\k^{-1}Y_{\a_\k})=\mu(\a_\k^{-1}(\a_\k Z\cap Y_{\a_\k}))=N(\a_\k)^\beta\mu(\a_\k Z\cap Y_{\a_\k})$. Since $Y_{\a_\k}\subseteq Y_{R}$, we see that $\mu$ is determined by its restriction to $Y_R$.
Thus, there can be at most one measure on $\I_\m/i(K_{\m,\Gamma})\times \hat{R}_S/\hat{R}_S^*$ that both satisfies \eqref{eqn:qimultiplicativetilde} and extends $\nu$. We now proceed to construct this extension.
Define $\tilde{\nu}$ on $\I_\m/i(K_{\m,\Gamma})\times \hat{R}_S/\hat{R}_S^*$ by
	\[
	\tilde{\nu}(Z)=\sum_{\k\in\I_\m/i(K_{\m,\Gamma})}N(\a_\k)^\beta\nu(\a_\k Z\cap Y_{\a_\k})
	\]
for Borel sets $Z\subseteq \I_\m/i(K_{\m,\Gamma})\times \hat{R}_S/\hat{R}_S^*$. A short calculation shows that $\tilde{\nu}$ is a finite measure extending $\nu$. We need to show that $\tilde{\nu}$ satisfies \eqref{eqn:qimultiplicativetilde}. For each $\k$, let $b_\k\in K_{\m,\Gamma}$ be such that $\a\a_\k=b_\k\a_{\a\cdot\k}$. We have
	
\begin{align*}
	\tilde{\nu}(\a Z)&=\sum_{\k\in\I_\m/i(K_{\m,\Gamma})}N(\a_\k)^\beta\nu(\a_\k\a Z\cap Y_{\a_\k})\\
	&=N(\a)^{-\beta}\sum_{\k\in\I_\m/i(K_{\m,\Gamma})}N(\a_\k\a)^\beta\nu(\a_\k\a Z\cap Y_{\a_\k})\\
	&=N(\a)^{-\beta}\sum_{\k\in\I_\m/i(K_{\m,\Gamma})}N(\a_\k\a)^\beta\nu(\a_\k\a Z\cap Y_{\a\a_\k})\quad (\text{since }a_\k(\a Z\cap Y_{R})=\a_\k(\a Z\cap Y_{\a})) \\
	&=N(\a)^{-\beta}\sum_{\k\in\I_\m/i(K_{\m,\Gamma})}N(b_\k\a_{\a\cdot\k})^\beta\nu(b_\k\a_{\a\cdot\k} Z\cap Y_{b_\k\a_{\a\cdot\k}})\\
	&=N(\a)^{-\beta}\sum_{\k\in\I_\m/i(K_{\m,\Gamma})}N(b_\k\a_{\a\cdot\k})^\beta N(b_\k)^{-\beta}\nu(\a_{\a\cdot\k} Z\cap Y_{\a_{\a\cdot\k}})\quad (\text{using \eqref{eqn:qimultiplicative}})\\
	&=N(\a)^{-\beta}\sum_{\tilde{\k}\in\I_\m/i(K_{\m,\Gamma})}N(\a_{\tilde{\k}})^\beta\nu(\a_{\tilde{\k}} Z\cap Y_{\a_{\tilde{\k}}})\\
	&=N(\a)^{-\beta}\tilde{\nu}(Z).
\end{align*}
This concludes the proof.
\end{proof}

The following ergodicity results is the key step towards Theorem~\ref{thm:uniquemeasure}.

\begin{proposition}\label{prop:ergodicity}
Let $\beta\in (0,1]$ and suppose that $\nu$ is a probability measure on $\I_\m/i(K_{\m,\Gamma})\times \hat{R}_S/\hat{R}_S^*$ satisfying \eqref{eqn:qimultiplicativetilde}. 
Then the closed subspace 
	\[
	H=\{f\in L^2(\I_\m/i(K_{\m,\Gamma})\times \hat{R}_S/\hat{R}_S^*,\nu) : f(\a z)=f(z) \text{ for } \a\in \I_\m^+, z\in \I_\m/i(K_{\m,\Gamma})\times \hat{R}_S/\hat{R}_S^*\}
	\] 
of $L^2(\I_\m/i(K_{\m,\Gamma})\times \hat{R}_S/\hat{R}_S^*,\nu)$ consisting of $\I_\m^+$-invariant functions coincides with the constant functions. That is, the partial action of $\I_\m$ on $(\I_\m/i(K_{\m,\Gamma})\times \hat{R}_S/\hat{R}_S^*,\nu)$ is ergodic.
\end{proposition}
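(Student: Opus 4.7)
The plan is to adapt the Bost--Connes-type strategy used in \cite{BC:1995,Nesh:2002,LLN:2009}: Fourier-decompose any $\I_\m^+$-invariant function over the finite abelian group $G := \I_\m/i(K_{\m,\Gamma})$, and then exploit the analyticity and non-vanishing at $s=1$ of the Hecke-type $L$-functions associated to non-trivial characters of $G$ in order to kill all Fourier components except the trivial one. First, I would apply \eqref{eqn:qimultiplicativetilde} with $\a=\b^{-1}$ for integral $\b\in\I_\m^+$ to obtain $\nu(\{\k\}\times U_\b) = N(\b)^{-\beta}\nu(\{\k[\b]^{-1}\}\times U_R)$, which pins $\nu$ on basic cylinder sets down to the $|G|$ numbers $\{\nu(\{\k\}\times U_R)\}_{\k\in G}$. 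Given $f\in H$, Fourier-expand $f(\k,\bar{\mathbf{a}})=\sum_{\chi\in\widehat{G}}\chi(\k)h_\chi(\bar{\mathbf{a}})$ with $h_\chi(\bar{\mathbf{a}})=|G|^{-1}\sum_\k\overline{\chi(\k)}f(\k,\bar{\mathbf{a}})$; the $\I_\m^+$-invariance of $f$ then translates into the eigenfunction relation $h_\chi(\a\bar{\mathbf{a}})=\overline{\chi(\a)}h_\chi(\bar{\mathbf{a}})$ for each $\chi$ and every integral $\a$.

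Another application of the scaling, via the substitution $\bar{\mathbf{a}}\mapsto\b\bar{\mathbf{b}}$, yields the moment identity
\[
\int_{U_\b}h_\chi\,d\nu_\k = N(\b)^{-\beta}\overline{\chi(\b)}\,c_\chi,\qquad c_\chi := \int h_\chi\,d\nu_\k,
\]
where $\nu_\k$ denotes the slice of $\nu$ above $\k\in G$. Since the linear span of $\{1_{U_\b}:\b\in\I_\m^+\}$ is dense in $L^2(\nu_\k)$, the function $h_\chi$ is determined entirely by the single scalar $c_\chi$; for the trivial character one reads off immediately that $h_1$ is constant $\nu_\k$-a.e., so the proof reduces to showing $c_\chi=0$ for every non-trivial $\chi\in\widehat{G}$.

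The main obstacle will be establishing this vanishing, and the plan is to pair $h_\chi$ against the test function $\phi_M:=\sum_{\b\in\I_\m^+,\,N(\b)\le M}\overline{\chi(\b)}N(\b)^{\beta-1}1_{U_\b}$. A direct computation using the moment identity gives
\[
\int\phi_M\,\overline{h_\chi}\,d\nu_\k = \overline{c_\chi}\sum_{N(\b)\le M}N(\b)^{-1}\sim \overline{c_\chi}\cdot\rho_K\log M\qquad(M\to\infty),
\]
by the Mertens-type asymptotic for the semigroup $\I_\m^+$, with $\rho_K>0$. On the other hand, expanding $\|\phi_M\|_{L^2(\nu_\k)}^2$ using $U_\b\cap U_{\b'}=U_{\mathrm{lcm}(\b,\b')}$ and decomposing $\b=d\b_1,\,\b'=d\b'_1$ with $\gcd(\b_1,\b'_1)=R$ splits the norm into an outer sum $\sum_{N(d)\le M}N(d)^{\beta-2}$---which diverges at most logarithmically because $\beta\le 1$---and an inner partial character-sum, uniformly bounded by the analyticity and non-vanishing of the Hecke $L$-function $L_S(\chi,s):=\sum_{\b\in\I_\m^+}\chi(\b)N(\b)^{-s}$ at $s=1$ (a classical theorem of Hecke for characters of $G$). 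The resulting bound $\|\phi_M\|_{L^2(\nu_\k)}=O(\sqrt{\log M})$ combined with Cauchy--Schwarz and $\|h_\chi\|_{L^2}<\infty$ then forces $|c_\chi|\cdot\rho_K\log M\le O(\sqrt{\log M})$, so $c_\chi=0$. Together with the previous paragraph this shows $f$ is $\nu$-a.e. constant, completing the proof.
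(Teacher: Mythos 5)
Your overall strategy (Fourier decomposition over the finite group $G:=\I_\m/i(K_{\m,\Gamma})$ plus a pairing argument exploiting boundedness at $s=1$ of partial sums of the $L$-functions of non-trivial characters) is a legitimate Bost--Connes-style alternative to the paper's proof, which instead works with the explicit conditional expectations $P_{\tilde F}$ onto the $\I_{\tilde F^c}^+$-invariant functions (via \cite[Proposition~1.2]{LLN:2009}), evaluates $\|P_{\tilde F}f\|$ exactly as a ratio of Euler products, and uses monotonicity in $\beta$ to reduce $\beta\in(0,1]$ to the behaviour of $L(\chi,\beta)/\zeta_K(\beta)$ as $\beta\to 1^+$. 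However, your write-up has a genuine gap in the class-group bookkeeping. The scaling relation \eqref{eqn:qimultiplicativetilde} acts diagonally and moves the $G$-coordinate, so the substitution $\bar{\mathbf{a}}\mapsto\b\bar{\mathbf{b}}$ identifies $\nu_\k\vert_{U_\b}$ with $N(\b)^{-\beta}$ times the push-forward of the slice $\nu_{[\b]^{-1}\k}$, \emph{not} of $\nu_\k$. The correct moment identity is $\int_{U_\b}h_\chi\,d\nu_\k=N(\b)^{-\beta}\overline{\chi(\b)}\,c_{\chi,[\b]^{-1}\k}$ with $c_{\chi,\k'}:=\int h_\chi\,d\nu_{\k'}$: there are $|G|$ constants per character, not one. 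Your single pairing then only yields $\sum_{\k'}c_{\chi,\k'}=0$ (since the ideals $\b$ with $N(\b)\le M$ equidistribute among the classes $[\b]^{-1}\k$), which does not force $h_\chi=0$. For the same reason the trivial-character component is not ``immediately'' constant: the constant function $1$ has moments $N(\b)^{-\beta}w_{[\b]^{-1}\k}$, where $w_{\k'}:=\nu_{\k'}(\hat{R}_S/\hat{R}_S^*)$ are the slice masses, and nothing in the hypotheses tells you in advance that these are all equal. The same problem infects the norm bound: $\nu_\k(U_{\mathrm{lcm}(\b,\b')})=N(\mathrm{lcm}(\b,\b'))^{-\beta}w_{[\mathrm{lcm}(\b,\b')]^{-1}\k}$, and the component of $\k'\mapsto w_{\k'}$ along $\overline{\chi}$ turns the inner sum into a partial sum of the \emph{trivial} character, destroying the $O(\sqrt{\log M})$ estimate; the clean bound only comes out if you compute $\|\phi_M\|$ against the full measure $\nu$ on $G\times\hat{R}_S/\hat{R}_S^*$.

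The argument can be repaired, but it takes work that is absent from the proposal: pair $F_\chi(\k,\bar{\mathbf{a}}):=\chi(\k)h_\chi(\bar{\mathbf{a}})$ against the twisted test functions $g_M^\psi(\k,\bar{\mathbf{a}}):=\chi\psi(\k)\sum_{N(\b)\le M}\overline{\chi\psi(\b)}N(\b)^{\beta-1}1_{U_\b}(\bar{\mathbf{a}})$ for every $\psi\in\widehat{G}$ with $\chi\psi$ non-trivial, and conclude that all Fourier coefficients of $\k'\mapsto c_{\chi,\k'}$ except the one along $\overline{\chi}$ vanish, i.e.\ $c_{\chi,\k'}=\lambda_\chi\overline{\chi(\k')}$. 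This shows that every $f\in H$ has moments $\int_{\{\k\}\times U_\b}f\,d\nu=C_f\,N(\b)^{-\beta}$ for a single constant $C_f$; applying this to the constant function $1\in H$ forces the slice masses $w_{\k'}$ to be equal, and only then does a comparison of moments identify every $f\in H$ with a constant. As written, the proposal stops well short of this.
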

\begin{proof}
The proof is similar to that of \cite[Theorem~2.1(ii)]{LLN:2009}. 
Let $P$ be the orthogonal projection from $L^2(\I_\m/i(K_{\m,\Gamma})\times \hat{R}_S/\hat{R}_S^*,\nu)$ onto $H$; we need to show that $Pf$ is a constant function for every $f$.
For this, it suffices to compute $P$ at pull-backs of functions on
\[
\I_\m/i(K_{\m,\Gamma})\times \prod_{\p\in F}\p^{\mathbb{N}\cup\{\infty\}}
\]
for every non-empty finite subset $F\subseteq\P_K^\m$. Now fix such an $F$, and let $\I_{F^c}^+$ be the free submonoid of $\I_\m^+$ generated by the primes in $F$. Up to a set of measure zero, $\I_\m/i(K_{\m,\Gamma})\times \prod_{\p\in F}\p^{\mathbb{N}\cup\{\infty\}}$ coincides with
\[
\bigsqcup_{\a\in \I_{F^c}^+}\a(\I_\m/i(K_{\m,\Gamma})\times\{\underbrace{(1,...,1)}_{|F|\text{-times}}\}).
\] 
Since $\I_\m/i(K_{\m,\Gamma})\times\{(1,...,1)\}\simeq \I_\m/i(K_{\m,\Gamma})$ is a finite group, it suffices to compute, for each fixed $\a\in\I_{F^c}^+$ and character $\tilde{\chi}$ of $\I_\m/i(K_{\m,\Gamma})$, $Pf$ where $f$ is the pull-back of
	\[
	\I_\m/i(K_{\m,\Gamma})\times\prod_{\p\in F}\p^{\mathbb{N}\cup\{\infty\}} \ni (\k,\bar{\mathbf{a}}) \mapsto 
	\begin{cases}
	\tilde{\chi}([\a]^{-1}\k) & \text{ if } (\k,\bar{\mathbf{a}})\in \a(\I_\m/i(K_{\m,\Gamma})\times\{\underbrace{(1,...,1)}_{|F|\text{-times}}\})\\
	0 & \text{ otherwise}.
	\end{cases}
	\]
The character $\chi:\I_\m\to \mathbb{T}$ defined by $\chi(\a):=\tilde{\chi}([\a])$ satisfies $\chi(i(K_{\m,1}))=\{1\}$, and is thus a (generalized) Dirichlet character modulo $\m$.\\
For each finite subset $\tilde{F}\subseteq\P_K^\m$, let $P_{\tilde{F}}$ be the orthogonal projection onto the subspace $H_{\tilde{F}}$ consisting of $\I_{\tilde{F}^c}^+$-invariant functions, so that the projection $P$ is the decreasing strong operator limit of the net $(P_{\tilde{F}})_{\tilde{F}}$. Also let 
	\[
	W_{\tilde{F}}:=\I_\m/i(K_{\m,\Gamma})\times\{\underbrace{(1,...,1)}_{|\tilde{F}|\text{-times}}\}\times\prod_{\p\in \tilde{F}^c}\p^{\mathbb{N}\cup\{\infty\}},
	\]
so that the sets $\b W_{\tilde{F}}$ are disjoint for $\b\in\I_{\tilde{F}^c}^+$, and their union has full $\nu$-measure.
Applying \cite[Proposition~1.2(2)]{LLN:2009} with, in the notation from the statement of \cite[Proposition~1.2]{LLN:2009}, $Y_0=W_{\tilde{F}}$, $Y=\I_\m/i(K_{\m,\Gamma})\times\prod_{\p\in \P_K^\m}\p^{\mathbb{N}\cup\{\infty\}}$, $S=\I_{\tilde{F}^c}^+$ and $G=\I_{\tilde{F}^c}$, we get that the projection $P_{\tilde{F}}$ is given explicitly by
	\begin{equation*}
	P_{\tilde{F}}f\vert_{\I_{\tilde{F}^c}^+w}\equiv \frac{1}{\zeta_{\tilde{F}}(\beta)}\sum_{\b\in \I_{\tilde{F}^c}^+}N(\b)^{-\beta}f(\b w)
	\end{equation*}
for $w=(\k,\bar{\mathbf{a}})\in W_{\tilde{F}}$ where $\zeta_{\tilde{F}}(\beta):=\prod_{\p\in \tilde{F}}(1-N(\p)^{-\beta})^{-1}=\sum_{\b\in \I_{\tilde{F}^c}^+}N(\b)^{-\beta}$.
Now suppose that $\tilde{F}\supseteq F$. Then for $f(\b w)$ to be non-zero, it is necessary that $\b\in \a \I_{(\tilde{F}\setminus F)^c}^+$, and, in this case, $f(\b w)=\tilde{\chi}([\a^{-1}\b]\k)=\chi(\a^{-1}\b)\tilde{\chi}(\k)$. Hence, for $w=(\k,\bar{\mathbf{a}})\in W_{\tilde{F}}$, we have
	\begin{equation}\label{eqn:proj}
		P_{\tilde{F}}f\vert_{\I_{\tilde{F}^c}^+w}=\frac{1}{\zeta_{\tilde{F}}(\beta)}\sum_{\b\in \a \I_{(\tilde{F}\setminus F)^c}^+}N(\b)^{-\beta}\chi(\a^{-1}\b)\tilde{\chi}(\k)=\frac{N(\a)^{-\beta}\tilde{\chi}(\k)}{\zeta_{\tilde{F}}(\beta)}\sum_{\c\in \I_{(\tilde{F}\setminus F)^c}^+}N(\c)^{-\beta}\chi(\c).
	\end{equation}
If $\chi$ is the trivial character, then the right-hand side of \eqref{eqn:proj} equals 
	\[
	N(\a)^{-\beta}\frac{\prod_{\p\in \tilde{F}\setminus F}(1-N(\p)^{-\beta})^{-1}}{\prod_{\p\in\tilde{F}}(1-N(\p)^{-\beta})^{-1}}=N(\a)^{-\beta}\prod_{\p\in F}(1-N(\p)^{-\beta}),
	\]
	so $Pf=\lim_{\tilde{F}}P_{\tilde{F}}f$ is constant. Now suppose that $\chi$ is non-trivial. Then,
	\begin{align*}
	||P_{\tilde{F}}f||_{L^2(\nu)}^2&=\sum_{\b\in \I_{\tilde{F}^c}^+}\int_{\b W_{\tilde{F}}}|P_{\tilde{F}}f|^2\;d\nu\\
	&=\zeta_{\tilde{F}}(\beta)\int_{W_{\tilde{F}}}|P_{\tilde{F}}f|^2\;d\nu\quad\text{(using that $\nu$ satisfies \eqref{eqn:qimultiplicativetilde})}\\
	&=\zeta_{\tilde{F}}(\beta) \left|\frac{N(\a)^{-\beta}}{\zeta_{\tilde{F}}(\beta)}\sum_{\c\in \I_{(\tilde{F}\setminus F)^c}^+}N(\c)^{-\beta}\chi(\c)\right|^2\nu(W_{\tilde{F}})\quad \text{(using \eqref{eqn:proj})}\\
	&=\left(N(\a)^{-\beta}\frac{\prod_{\p\in \tilde{F}}|1-N(\p)^{-\beta}|}{\prod_{\p\in \tilde{F}\setminus F}|1-\chi(\p)N(\p)^{-\beta}|}\right)^2 \quad \text{(since $\nu(W_{\tilde{F}})=\zeta_{\tilde{F}}(\beta)^{-1}$).}
	\end{align*}
Hence,
	\begin{equation}\label{eqn:normestimate}
	||Pf||_{L^2(\nu)}=\lim_{\tilde{F}}||P_{\tilde{F}}f||_{L^2(\nu)}=N(\a)^{-\beta}\lim_{\tilde{F}}\frac{\prod_{\p\in \tilde{F}}|1-N(\p)^{-\beta}|}{\prod_{\p\in \tilde{F}\setminus F}|1-\chi(\p)N(\p)^{-\beta}|}.
	\end{equation}
For each $\tilde{F}$, the function
	\[
	\beta\mapsto \frac{\prod_{\p\in \tilde{F}}|1-N(\p)^{-\beta}|}{\prod_{\p\in \tilde{F}\setminus F}|1-\chi(\p)N(\p)^{-\beta}|}
	\]
	is increasing on $(0,\infty)$ since for $\p\in\tilde{F}\setminus F$, the function $\beta\mapsto \frac{|1-N(\p)^{-\beta}|}{|1-\chi(\p)N(\p)^{-\beta}|}$ is increasing on $(0,\infty)$. For $\beta>1$, the limit $\lim_{\tilde{F}}\frac{\prod_{\p\in \tilde{F}}|1-N(\p)^{-\beta}|}{\prod_{\p\in \tilde{F}\setminus F}|1-\chi(\p)N(\p)^{-\beta}|}$ exists and is equal to 
	\[
	\frac{|L(\chi,\beta)|}{\zeta_K(\beta)}\frac{\prod_{\p\in F\cup S}|(1-\chi(\p)N(\p)^{-\beta})|}{\prod_{\p\in S}(1-N(\p)^{-\beta})}
	\]
	where $L(\chi,\beta)$ is the (generalized) Dirichlet $L$-function associated with $\chi$ and $\zeta_K(\beta)$ is the Dedekind zeta function of $K$. Now as $\beta\to1^+$, $L(\chi,\beta)$ tends to a finite value, see, for example, \cite[Chapter~VI,~Corollary~2.11]{Mil:2013}, whereas $\zeta_K(\beta)$ has a pole at $\beta=1$ by \cite[Chapter~VI,~Corollary~2.12]{Mil:2013}. Therefore, the right hand side of \eqref{eqn:normestimate} converges to zero for all $\beta\in(0,1]$, so $||Pf||_{L^2(\nu)}=0$. In particular, $Pf$ is constant.
\end{proof}

We are now ready to prove Theorem~\ref{thm:uniquemeasure}.

\begin{proof}[Proof of Theorem~\ref{thm:uniquemeasure}]
We first deal with the case $\beta=0$. Suppose $\nu$ is a $K_{\m,\Gamma}$-invariant probability measure on $\hat{R}_S/\hat{R}_S^*$. Then, in particular, we have $\nu(a \hat{R}_S/\hat{R}_S^*)=1$ for every $a\in  R_{\m,\Gamma}$, which implies that $\nu(\bigcap_a a \hat{R}_S/\hat{R}_S^*)=1$. Since $\bigcap_a a \hat{R}_S/\hat{R}_S^*=\{\bar{\mathbf{0}}\}$, we have $\nu=\delta_{\bar{\mathbf{0}}}$, as desired.
	
Now let $\beta\in (0,1]$. By Lemma~\ref{lem:1-1}, it suffices to show that the probability measure $\bar{\nu}_\beta$ on $\I_\m/i(K_{\m,\Gamma})\times \hat{R}_S/\hat{R}_S^*$ obtained by normalizing $\tilde{\nu}_\beta$ is the unique probability measure satisfying \eqref{eqn:qimultiplicativetilde}.
The set of probability measures that satisfy \eqref{eqn:qimultiplicativetilde} forms a simplex $\Sigma$, and Proposition~\ref{prop:ergodicity} says that all measures in $\Sigma$ are ergodic. A non-trivial convex combination of measures is never ergodic, so we have $\Sigma=\{\bar{\nu}_\beta\}$.
\end{proof}

We are now ready for the proof of uniqueness for $\beta\in[1,2]$.

\begin{proof}[Proof of the existence and uniqueness statement in Theorem~\ref{thm:phasetransitions}(ii)]
Let $\pi$ denote the quotient map $\hat{R}_S\times \hat{R}_S/\hat{R}_S^*\to\Omega_R^\m$ and $m$ the normalized Haar measure on $\hat{R}_S$.
For $\beta\in [1,2]$, let $\mu_\beta:=\pi_*(m\times \nu_{\beta-1})$ be push-forward of the product measure $m\times \nu_{\beta-1}$ under $\pi$. It follows from  Theorem~\ref{thm:uniquemeasure} combined with Proposition~\ref{prop:disintegration} that $\mu_\beta$ is the unique probability measure on $\Omega_R^\m$ satisfying \eqref{eqn:qi}. 

We now show that the set of points in $\Omega_R^\m$ with non-trivial isotropy has $\mu_\beta$-measure zero. Our proof is almost the same as that of \cite[Lemma~3.3]{Nesh:2013}, but we include it for completeness.
For $\beta=1$, we can identify the measure space $(\Omega_R^\m,\mu_1)$ with $(\hat{R}_S,m)$, and the partial action of $G_{\m,\Gamma}$ on $(\hat{R}_S,m)$ is given by the usual ``$ax+b$'' action, that is, by $(n,k)\mathbf{a}=n+k\mathbf{a}$ for $(n,k)\in G_{\m,\Gamma}$ and $\mathbf{a}\in\hat{R}_S$ such that $n+k\mathbf{a}\in\hat{R}_S$. In this case, each non-identity element of $G_{\m,\Gamma}$ has at most one fixed point in $\hat{R}_S$, and every point in $\hat{R}_S$ has $m$-measure zero. It follows that the set of points in $\Omega_R^\m$ with non-trivial isotropy has $\mu_1$-measure zero.

Now let $\beta\in (1,2]$. To show that the set of points in $\Omega_R^\m$ with non-trivial isotropy has $\mu_\beta$-measure zero, it suffices to show that for each non-trivial element $\gamma=(n,k)\in G_{\m,\Gamma}$, the set of points in $[\mathbf{b},\bar{\mathbf{a}}]\in\Omega_R^\m$ fixed by $\gamma$ has $\mu_\beta$-measure zero. As was done in a similar situation in the proof of \cite[Proposition~2.1]{LN:2011}, we can disintegrate $\mu_\beta$ with respect to the canonical projection map $\Omega_R^\m\to \hat{R}_S/\hat{R}_S^*$ to get that
\[
\int_{\Omega_R^\m} f([\mathbf{b},\bar{\mathbf{a}}])\;d\mu_\beta([\mathbf{b},\bar{\mathbf{a}}])=\int_{\hat{R}_S/\hat{R}_S^*}\left(\int_{\hat{R}_S/\bar{\mathbf{a}}\hat{R}_S}f([\mathbf{b},\bar{\mathbf{a}}]) \;d\lambda_{\bar{\mathbf{a}}}(\dot{\mathbf{b}})\right)\;d\nu_\beta(\bar{\mathbf{a}})
\]
for each $f\in C(\Omega_R^\m)$ where for $\nu_\beta$-a.e. $\bar{\mathbf{a}}\in\hat{R}_S/\hat{R}_S^*$, the probability measure $\lambda_{\bar{\mathbf{a}}}$ is equal to the normalized Haar measure $m_{\bar{\mathbf{a}}}$ on the quotient $\hat{R}_S/\bar{\mathbf{a}}\hat{R}_S$ and where $\dot{\mathbf{b}}$ denotes the image of $\mathbf{b}$ under the quotient map $\hat{R}_S\to\hat{R}_S/\bar{\mathbf{a}}\hat{R}_S$.
Hence, to show that set of points in $[\mathbf{b},\bar{\mathbf{a}}]\in\Omega_R^\m$ fixed by $\gamma$ has $\mu_\beta$-measure zero, it is enough to show that for $\nu_\beta$-a.e. $\bar{\mathbf{a}}\in\hat{R}_S/\hat{R}_S^*$, the set $A_{\gamma,\bar{\mathbf{a}}}:=\{\dot{\mathbf{b}}\in \hat{R}_S/\bar{\mathbf{a}}\hat{R}_S : \gamma[\mathbf{b},\mathbf{a}]=[\mathbf{b},\mathbf{a}]\}$ has $\m_{\bar{\mathbf{a}}}$-measure zero. 
Let 
\[
\AA_S^*:=\Big\{\mathbf{a}=(\mathbf{a}_\p)_\p\in\prod_{\p\in\P_K\setminus S}K_\p^* :  \mathbf{a}_\p\in R_\p^* \text{ for all but finitely many } \p\Big\},
\]
so that $(\AA_S^*\cap\hat{R}_S)/\hat{R}_S^*$ can be identified with the countable set $\I_\m^+$. Since $\beta\in(1,2]$, the set 
\[
\{\bar{\mathbf{a}}\in \hat{R}_S/\hat{R}_S^* : \bar{\mathbf{a}} \notin (\AA_S^*\cap\hat{R}_S)/\hat{R}_S^*\text{ and }\bar{\mathbf{a}}_\p\neq 0 \text{ for all } \p\}
\] 
has full $\nu_\beta$-measure, so we only need to show that $m_{\bar{\mathbf{a}}}(A_{\gamma,\bar{\mathbf{a}}})=0$ for all $\bar{\mathbf{a}} \notin (\AA_S^*\cap\hat{R}_S)/\hat{R}_S^*$ such that $\bar{\mathbf{a}}_\p\neq 0$ for all $\p$.
The set $A_{\gamma,\bar{\mathbf{a}}}$ is empty unless $k\bar{\mathbf{a}}=\bar{\mathbf{a}}$, in which case the condition $\bar{\mathbf{a}}_\p\neq 0$ for all $\p$ forces $k\in R_{\m,\Gamma}^*$. Now we see that $A_{\gamma,\bar{\mathbf{a}}}=\{\dot{\mathbf{b}}\in\hat{R}_S/\bar{\mathbf{a}}\hat{R}_S: (k-1)\dot{\mathbf{b}}=\dot{\mathbf{b}}\}$. If $k=1$, then $A_{\gamma,\bar{\mathbf{a}}}$ is empty unless $n\in\bar{\mathbf{a}}\hat{R}_S$; in this case, we must have $n=0$ because of the assumption that $\bar{\mathbf{a}} \notin (\AA_S^*\cap\hat{R}_S)/\hat{R}_S^*$, which then implies $\gamma=(0,1)$. Since we assumed $\gamma$ to be non-trivial, we see that $A_{\gamma,\bar{\mathbf{a}}}$ can be non-empty only when $k\neq 1$. In this case, $k-1$ lies in $R_\p^*$ for all $\p$ outside some finite set $F\subseteq\P_K^\m$, and thus any $\dot{\mathbf{c}}\in A_{\gamma,\bar{\mathbf{a}}}$ is uniquely determined for all $\p\in\P_K^\m\setminus F$. We can now obtain the inequality 
\[
m_{\bar{\mathbf{a}}}(A_{\gamma,\bar{\mathbf{a}}})\leq \prod_{\p\in \P_K^\m\setminus F}|R_\p/\bar{\mathbf{a}}_\p R_\p|^{-1},
\]
and the product on the right hand side diverges to zero because of the assumption that there are infinitely many $\p$ for which $\bar{\mathbf{a}}_\p$ does not lie in $R_\p^*$. This finishes our proof that the set of points in $\Omega_R^\m$ with non-trivial isotropy has $\mu_\beta$-measure zero.

Now \cite[Theorem~1.3]{Nesh:2013} implies that $\mu_\beta\circ \E$ is the unique $\sigma$-KMS$_\beta$ state on $C^*(G_{\m,\Gamma}\ltimes\Omega_R^\m)$ for $\beta\in [1,2]$. Moreover, since $\mu_\beta(V_{(x+\a)\times\a^\times})=N(\a)^{-\beta}$, we are done.
\end{proof}

\begin{remark}
 If $\m$ and $\Gamma$ are trivial, or if $\m=\m_\infty$ consists of all the real embeddings of $K$ and $\Gamma$ is trivial, then Theorem~\ref{thm:uniquemeasure} can be deduced from \cite[Theorem~3.1]{Nesh:2013}. However, even in this case, our proof here is different: in \cite[Section~3]{Nesh:2013}, the special case of Theorem~\ref{thm:uniquemeasure} is obtained by using known results from \cite{LNT:2013} for the Hecke C*-dynamical system associated with the Hecke pair $(K\rtimes K_+^*,R\rtimes R_+^*)$, whereas we give a more direct proof.
\end{remark}

\subsection{Low temperature KMS states: the proof of part (iii)}\label{subsec:part(iii)}
The map $\a\mapsto \prod_{\p\in\P_K^\m}\p^{v_\p(\a)}$ canonically identifies $\I_\m^+$ with a subset of $\prod_{\p\in \P_K^\m}\p^{\mathbb{N}\cup\{\infty\}}$. Composing with the canonical homeomorphism $\prod_{\p\in\P_K^\m}\p^{\mathbb{N}\cup\{\infty\}}\simeq\hat{R}_S/\hat{R}_S^*$, we may view $\I_\m^+$ as a subset of $\hat{R}_S/\hat{R}_S^*$.
The image of $\I_\m^+$ in $\hat{R}_S/\hat{R}_S^*$ consists of those ``super ideals'' that are coprime to $\m_0$ and have only finitely many divisors, that is, with the set 
\[
\{\bar{\mathbf{a}}\in \hat{R}_S/\hat{R}_S^* : \bar{\mathbf{a}}\in U_\a \text{ for only finitely many } \a\}
\]
where $U_\a=\a\hat{R}_S/\hat{R}_S^*$. We will show that for each $\beta>1$, every probability measure on $\hat{R}_S/\hat{R}_S^*$ that satisfies \eqref{eqn:qimultiplicative} must be concentrated on this countable set, and thus is a convex combination of measures that are concentrated on orbits for the partial action of $K_{\m,\Gamma}$ on $\hat{R}_S/\hat{R}_S^*$. These orbits are precisely the sets $\k\cap \I_\m^+$ for $\k\in\I_\m/i(K_{\m,\Gamma})$.

The \emph{partial zeta function} associated with a class $\k\in\I_\m/i(K_{\m,\Gamma})$ is the Dirichlet series 
\[
\zeta_\k(s):=\sum_{\a\in \k\cap\I_\m^+}N(\a)^{-s},
\]
which converges for all complex numbers $s$ with real part greater than $1$. 

\begin{lemma}\label{lem:qimeasuresforlargebeta}
For each $\beta\in (1,\infty)$ and each class $\k\in\I_\m/i(K_{\m,\Gamma})$, let $\nu_{\beta,\k}$ be the probability measure on $\hat{R}_S/\hat{R}_S^*$ given by
\[
\nu_{\beta,\k}:=\frac{1}{\zeta_\k(\beta)}\sum_{\a\in \k\cap \I_\m^+}N(\a)^{-\beta}\delta_{\a}
\]
where $\delta_\a$ denotes the unit mass concentrated at the point $\a\in\hat{R}_S/\hat{R}_S^*$. Then each measure $\nu_{\beta,\k}$ satisfies \eqref{eqn:qimultiplicative}. Moreover, any probability measure $\nu$ that satisfies \eqref{eqn:qimultiplicative} for $\beta\in (1,\infty)$ is a convex combination of measures from $\{\nu_{\beta,\k} : \k\in \I_\m/i(K_{\m,\Gamma})\}$.
\end{lemma}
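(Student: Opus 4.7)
The plan for the first assertion is a direct calculation. Since $k\in K_{\m,\Gamma}$ satisfies $[k]=[R]$ in $\I_\m/i(K_{\m,\Gamma})$, multiplication by $k$ permutes each class $\k$. Moreover, if $kZ\subseteq \hat{R}_S/\hat{R}_S^*$ and $\b\in \k\cap\I_\m^+\cap Z$, then $k\b$ is a fractional ideal in $\k$ whose image in $\hat{R}_S/\hat{R}_S^*$ is integral (since $kZ\subseteq\hat{R}_S/\hat{R}_S^*$) and coprime to $\m_0$ (since $k\in K_{\m,\Gamma}$ and $\b\in\I_\m^+$), hence $k\b\in \k\cap\I_\m^+\cap kZ$. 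Thus $\b\mapsto k\b$ is a bijection between the atoms of $\nu_{\beta,\k}$ in $Z$ and those in $kZ$, and since $N(k\b)^{-\beta}=N(k)^{-\beta}N(\b)^{-\beta}$, the scaling identity \eqref{eqn:qimultiplicative} follows immediately.

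For the second assertion, the first step is to apply Lemma~\ref{lem:1-1} to extend $\nu$ to a finite measure $\tilde\nu$ on $\I_\m/i(K_{\m,\Gamma})\times\hat{R}_S/\hat{R}_S^*$ satisfying the scaling condition \eqref{eqn:qimultiplicativetilde} for the action of the whole group $\I_\m$. A short computation shows that for any $\a\in\I_\m^+$, the set $\{[R]\}\times U_\a$ is the image under the partial action of $\a$ of the set $\{[\a^{-1}]\}\times\hat{R}_S/\hat{R}_S^*$, and therefore
\[
\nu(U_\a)\;=\;\tilde\nu(\{[R]\}\times U_\a)\;=\;N(\a)^{-\beta}\,C_{[\a^{-1}]},
\]
where $C_\k:=\tilde\nu(\{\k\}\times\hat{R}_S/\hat{R}_S^*)$. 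Crucially, $\nu(U_\a)$ depends on $\a$ only through its norm and its class, and the constants $\{C_\k\}$ are finite and uniformly bounded, since $\I_\m/i(K_{\m,\Gamma})$ is finite.

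The next step is to show that $\nu$ is concentrated on $\I_\m^+\subseteq\hat{R}_S/\hat{R}_S^*$. The complement consists of those $\bar{\mathbf{a}}$ for which either $\bar{\mathbf{a}}_\p=0$ for some $\p$, or $\bar{\mathbf{a}}_\p\notin R_\p^*$ for infinitely many $\p$. For the first case, $\{\bar{\mathbf{a}}:\bar{\mathbf{a}}_\p=0\}=\bigcap_n U_{\p^n}$, and the formula above gives $\nu(U_{\p^n})\leq (\max_\k C_\k)\,N(\p)^{-n\beta}\to 0$. For the second case, $\{\bar{\mathbf{a}}:\bar{\mathbf{a}}_\p\notin R_\p^*\}=U_\p$ has measure $\leq (\max_\k C_\k)\,N(\p)^{-\beta}$, and since $\beta>1$ the series $\sum_{\p\in\P_K^\m}N(\p)^{-\beta}$ converges (this is standard, following from convergence of the Euler product for $\zeta_K(\beta)$), so Borel--Cantelli eliminates that set as well. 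The main technical point is precisely this use of $\beta>1$ and the uniform bound on $C_\k$; it is where the dichotomy between the critical interval and the low-temperature regime is visible.

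The final step is to decompose $\nu$ over the orbits of the partial $K_{\m,\Gamma}$-action on $\I_\m^+$, which are exactly the sets $\k\cap\I_\m^+$ for $\k\in\I_\m/i(K_{\m,\Gamma})$. Write $c_\k:=\nu(\k\cap\I_\m^+)$. On each orbit with $c_\k>0$, pick any $\a_0\in\k\cap\I_\m^+$; for every other $\b\in\k\cap\I_\m^+$ the element $k:=\b\a_0^{-1}$ lies in $K_{\m,\Gamma}$, so the scaling condition forces
\[
\nu(\{\b\})\;=\;N(k)^{-\beta}\nu(\{\a_0\})\;=\;N(\b)^{-\beta}\bigl(N(\a_0)^{\beta}\nu(\{\a_0\})\bigr).
\]
Hence $\nu\vert_{\k\cap\I_\m^+}$ is proportional to $\sum_{\b\in\k\cap\I_\m^+}N(\b)^{-\beta}\delta_\b$, and normalizing identifies $\nu\vert_{\k\cap\I_\m^+}/c_\k$ with $\nu_{\beta,\k}$. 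Summing over $\k$ yields $\nu=\sum_{\k}c_\k\,\nu_{\beta,\k}$ with $c_\k\geq 0$ and $\sum_\k c_\k=1$, as desired.
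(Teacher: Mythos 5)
Your proof is correct, and its overall strategy coincides with the paper's: show via Borel--Cantelli that any measure satisfying \eqref{eqn:qimultiplicative} with $\beta>1$ is concentrated on the countable set $\I_\m^+\subseteq\hat{R}_S/\hat{R}_S^*$, then decompose over the orbits $\k\cap\I_\m^+$ of the partial $K_{\m,\Gamma}$-action and use the scaling relation to pin down the normalized restriction to each orbit as $\nu_{\beta,\k}$. The one place where you diverge is in how the summability needed for Borel--Cantelli is obtained. The paper stays entirely inside the scaling condition \eqref{eqn:qimultiplicative}: it parameterizes $\k\cap\I_\m^+$ as $\{x\a_\k : x\in(\a_\k^{-1}\cap K_{\m,\Gamma})/R_{\m,\Gamma}^*\}$ and computes $\sum_{\a\in\k\cap\I_\m^+}\nu(U_\a)=\zeta_\k(\beta)N(\a_\k)^\beta\nu(U_{\a_\k})<\infty$ directly, summing over \emph{all} of $\I_\m^+$. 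You instead invoke Lemma~\ref{lem:1-1} to extend $\nu$ to $\tilde\nu$ on $\I_\m/i(K_{\m,\Gamma})\times\hat{R}_S/\hat{R}_S^*$, deduce the uniform bound $\nu(U_\a)\le\bigl(\max_\k C_\k\bigr)N(\a)^{-\beta}$, and then apply Borel--Cantelli only to the primes (together with the separate $v_\p=\infty$ case). Both are valid; your route buys a cleaner pointwise bound on $\nu(U_\a)$ and a smaller Borel--Cantelli sum at the cost of importing the extension machinery, while the paper's computation is self-contained and, as a by-product, exhibits the constants $\zeta_\k(\beta)N(\a_\k)^\beta\nu(U_{\a_\k})$ that reappear in the final orbit decomposition.
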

\begin{proof}
For $\beta\in (1,\infty)$ and $\k\in \I_\m/i(K_{\m,\Gamma})$, a calculation shows that the measure $\nu_{\beta,\k}$ satisfies \eqref{eqn:qimultiplicative}.

Now fix $\beta\in(1,\infty)$, and let $\nu$ be a probability measure on $\hat{R}_S/\hat{R}_S^*$ that satisfies \eqref{eqn:qimultiplicative}. Recall that the inverse of a fractional ideal $\a$ in $\I_\m$ is given by $\a^{-1}:=\{x\in K : x\a\subseteq R\}$. For $\a\in\I_\m^+$ and $x\in \a^{-1}\cap K_{\m,\Gamma}$, we have $xU_\a=U_{x\a}$. Now,
\begin{align*}
\sum_{\a\in\k\cap \I_\m^+}\nu(U_\a)&=\sum_{x\in (\a_\k^{-1}\cap K_{\m,\Gamma})/R_{\m,\Gamma}^*}\nu(U_{x\a_\k})\\
&=\sum_{x\in (\a_\k^{-1}\cap K_{\m,\Gamma})/R_{\m,\Gamma}^*}N(x)^{-\beta}\nu(U_{\a_\k})\quad\text{(using \eqref{eqn:qimultiplicative})}\\
&=\sum_{x\in (\a_\k^{-1}\cap K_{\m,\Gamma})/R_{\m,\Gamma}^*}N(x\a_\k)^{-\beta}N(\a_\k)^{\beta}\nu(U_{\a_\k})\\
&=\zeta_\k(\beta)N(\a_\k)^{\beta}\nu(U_{\a_\k}).
\end{align*}
Thus, since $\beta>1$, 
	\[
	\sum_{\a\in \I_\m^+}\nu(U_\a)=\sum_{\k\in \I_\m/i(K_{\m,\Gamma})}\zeta_\k(\beta)N(\a_\k)^{\beta}\nu(U_{\a_\k})<\infty,
	\]
so the Borel-Cantelli lemma implies that $\nu$ is concentrated on the set 
	\[
	\{\bar{\mathbf{a}}\in \hat{R}_S/\hat{R}_S^*: \bar{\mathbf{a}}\in U_\a \text{ for only finitely many }\a\}.
	\]
This set coincides with the canonical copy of $\I_\m^+$ in $\hat{R}_S/\hat{R}_S^*$. Since $\nu$ satisfies \eqref{eqn:qimultiplicative} and $\I_\m^+$ is countable, the set of points that have positive $\nu$-measure must be a (disjoint) union of orbits for the partial action of $K_{\m,\Gamma}$ on $\I_\m^+$, and $\nu$ is a convex combination of its normalized restrictions to these orbits; moreover, these orbits are precisely the sets $\k\cap \I_\m^+$ for $\k\in\I_\m/i(K_{\m,\Gamma})$, and a calculation shows that $\nu_{\beta,\k}$ is the only probability measure that both satisfies \eqref{eqn:qimultiplicative} and is concentrated on $\k\cap \I_\m^+$, so we are done.
\end{proof}

We are now ready for the proof of Theorem~\ref{thm:phasetransitions}(iii).

\begin{proof}[Proof of Theorem~\ref{thm:phasetransitions}(iii)]
As before, let $\pi$ denote the quotient map $\hat{R}_S\times \hat{R}_S/\hat{R}_S^*\to\Omega_R^\m$, and let $m$ be the normalized Haar measure on $\hat{R}_S$.
For each $\beta>2$ and each class $\k\in\I_\m/i(K_{\m,\Gamma})$, let $\mu_{\beta,\k}:=\pi_*(m\times\nu_{\beta-1,\k})$ be the push-forward of the product measure $m\times\nu_{\beta-1,\k}$ under $\pi$.
By Proposition~\ref{prop:disintegration}, the map $\nu_{\beta-1}\mapsto \pi_*(m\times\nu_{\beta-1,\k})$ establishes an affine bijection from the simplex of probability measures on $\hat{R}_S/\hat{R}_S^*$ that satisfy \eqref{eqn:qimultiplicative-1} onto the simplex of probability measures on $\Omega_R^\m$ that satisfy \eqref{eqn:qi}; hence, by Lemma~\ref{lem:qimeasuresforlargebeta}, every probability measure on $\Omega_R^\m$ that satisfies \eqref{eqn:qi} is a convex combination of measures from the set $\{\pi_*(m\times\nu_{\beta-1,\k}): \k\in \I_\m/i(K_{\m,\Gamma})\}$.

For each $\a\in \I_\m^+$, there are exactly $N(\a)$ points $[\mathbf{b},\a]$ in $\Omega_R^\m$ with second component equal to $\a$. Indeed, we can always write $[\mathbf{b},\a]=[x,\a]$ for some $x\in\hat{R}_S/\a\hat{R}_S\cong R/\a$ with $\mathbf{b}-x\in \a\hat{R}_S$. 
Hence, for each $\k\in \I_\m/i(K_{\m,\Gamma})$, the set $\{[b,\a]\in \Omega_R^\m : \a\in\k\}$ is countable. 
Moreover, the partial action of $G_{\m,\Gamma}$ on $\{[b,\a]\in \Omega_R^\m : \a\in\k\}$ is transitive, and the measure $\mu_{\beta,\k}$ is concentrated on $\{[b,\a]\in \Omega_R^\m : \a\in\k\}$. 
This set contains the point $[0,\a_{\k,1}]$, which has isotropy group $\a_{\k,1}\rtimes R_{\m,\Gamma}^*$, and the $\sigma$-KMS$_\beta$ states $\phi$ satisfying $\phi\vert_{C(\Omega_R^\m)}=\mu_{\beta,\k}$ are in one-to-one correspondence with the tracial states of the group C*-algebra $C^*(\a_{\k,1}\rtimes R_{\m,\Gamma}^*)$ by \cite[Corollary~1.4]{Nesh:2013}.
Explicitly, the $\sigma$-KMS$_\beta$ state $\phi_{\beta,\k,\tau}$ corresponding to a tracial state $\tau$ of $C^*(\a_{\k,1}\rtimes R_{\m,\Gamma}^*)$ is given as follows. (This explicit description comes from the proofs of \cite[Theorem~1.3~\&~Corollary~1.4]{Nesh:2013}.) For each point $[x,\a]$ in the orbit $O_\k$ of $[0,\a_{\k,1}]$, there exists $\gamma\in G_{\m,\Gamma}$ such that $\gamma[x,\a]=[0,\a_{\k,1}]$. Conjugating by $\gamma$ then defines an isomorphism of the isotropy group $(x,1)\a\rtimes R_{\m,\Gamma}^*(-x,1)$ of $[x,\a]$ onto $\a_{\k,1}\rtimes R_{\m,\Gamma}^*$, which in turn gives rise to an isomorphism of group C*-algebras $C^*((x,1)\a\rtimes R_{\m,\Gamma}^*(-x,1))\cong C^*(\a_{\k,1}\rtimes R_{\m,\Gamma}^*)$.
Let $\tau_{x,\a}$ be the tracial state of $C^*((x,1)\a\rtimes R_{\m,\Gamma}^*(-x,1))$ given by the composition of $\tau$ with the isomorphism $C^*((x,1)\a\rtimes R_{\m,\Gamma}^*(-x,1))\cong C^*(\a_{\k,1}\rtimes R_{\m,\Gamma}^*)$. Then 
\begin{equation}
\label{eqn:KMSexplicit}
\phi_{\beta,\k,\tau}(f)=\int_{O_\k}\sum_{g\in R_{\m,\Gamma}^*}f(g,[x,\a])\tau_{x,\a}(u_g) \;d\mu_{\beta,\k}([x,\a]) \quad\text{for } f\in C_c(G_{\m,\Gamma}\ltimes\Omega_R^\m).
\end{equation}
(The formula given in \eqref{eqn:KMSexplicit} can be further simplified, but shall leave it in the more compact form above for convenience.) 
A calculation shows that the map $\tau\mapsto \phi_{\beta,\k,\tau}$ from the simplex of tracial states on $C^*(\a_{\k,1}\rtimes R_{\m,\Gamma}^*)$ to the simplex of $\sigma$-KMS$_\beta$ states on $C^*(G_{\m,\Gamma}\ltimes\Omega_R^\m)$ is affine.
If $\tau$ is a convex combination $\tau=\sum_\k\lambda_\k\tau_\k$ where $\tau_\k$ is a tracial state on $C^*(\a_{\k,1}\rtimes R_{\m,\Gamma}^*)$ for each $\k$, then we let $\phi_{\beta,\k,\tau}:=\sum_\k\lambda_\k\phi_{\beta,\k,\tau_\k}$. Then the map from the simplex of tracial states of $\bigoplus_{\k\in \I_\m/i(K_{\m,\Gamma})}C^*(\a_{\k,1}\rtimes R_{\m,\Gamma}^*)$ onto the simplex of $\sigma$-KMS$_\beta$ states on $C^*(G_{\m,\Gamma}\ltimes\Omega_R^\m)$ given $\tau\mapsto\phi_{\beta,\k,\tau}$ is affine; it follows from \cite[Theorem~1.3]{Nesh:2013} that $\tau\mapsto \phi_{\beta,\k,\tau}$ is also a bijection. Weak* continuity follows from the explicit formula \eqref{eqn:KMSexplicit}; since both simplices are weak* compact, this is enough to guarantee that $\tau\mapsto\phi_{\beta,\k,\tau}$ is a homeomorphism, which concludes the proof of Theorem~\ref{thm:phasetransitions}(iii).
\end{proof}

\subsection{Ground states: the proof of part (iv)}\label{subsec:ground}

We will first use \cite[Theorem~1.9]{LLN:2019} to identify the ground states of $(C^*(G_{\m,\Gamma}\ltimes\Omega_R^\m),\mathbb{R},\sigma)$ with the states on the C*-algebra of the \emph{boundary groupoid} of the cocycle $c^N$, see \cite[Section~1]{LLN:2019} for the general definition. In our special situation, this boundary groupoid has a particularly explicit description, which is given in the following result.

\begin{proposition}\label{prop:boundarygpoid}
Let $G_{\m,\Gamma,1}:=\{(n,k)\in G_{\m,\Gamma} : N(k)=1\}$ be the kernel of the homomorphism $G_{\m,\Gamma}\to \mathbb{R}_+^*$ given by $(n,k)\mapsto N(k)$, and let 
\[
(\Omega_R^\m)_0:=\Omega_R^\m \setminus\left(\bigcup_{(n,k) :\: N(k)>1}(n,k) \Omega_R^\m\right).
\]
Then the map $\psi\mapsto \phi_\psi$ defined by 
\[
\phi_\psi(f)=\psi(f\vert_{G_{\m,\Gamma,1}\ltimes (\Omega_R^\m)_0})\quad \text{for } f\in C_c(G_{\m,\Gamma}\ltimes \Omega_R^\m)
\]
is an affine isomorphism of the state space of $C^*(G_{\m,\Gamma,1}\ltimes (\Omega_R^\m)_0)$ onto the $\sigma$-ground state space of $C^*(G_{\m,\Gamma}\ltimes \Omega_R^\m)$ where $G_{\m,\Gamma,1}\ltimes (\Omega_R^\m)_0$ is the reduction groupoid of $G_{\m,\Gamma,1}\ltimes \Omega_K^\m$ with respect to the compact subset $ (\Omega_R^\m)_0\subseteq \Omega_K^\m$.
\end{proposition}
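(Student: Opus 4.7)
The plan is to apply \cite[Theorem~1.9]{LLN:2019} to the étale locally compact Hausdorff groupoid $\mathcal{G} := G_{\m,\Gamma}\ltimes\Omega_R^\m$ equipped with the continuous cocycle $c_N\colon \mathcal{G}\to\mathbb{R}$ given by $c_N((n,k),w)=\log N(k)$. That theorem provides an affine homeomorphism between the $\sigma^{c_N}$-ground state space of $C^*(\mathcal{G})$ and the state space of the C*-algebra of a canonically associated \emph{boundary groupoid}. The bulk of the work will consist in identifying this boundary groupoid explicitly with $G_{\m,\Gamma,1}\ltimes(\Omega_R^\m)_0$; once this is done, the formula in the statement is exactly the pull-back prescription from \cite[Theorem~1.9]{LLN:2019}.

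First I would unpack the boundary construction in our setting. The boundary unit space consists of those $w\in\Omega_R^\m$ that are not the range of any arrow with strictly positive cocycle value. Since $c_N((n,k),w)>0$ if and only if $N(k)>1$, and the range of such an arrow is $(n,k)w\in(n,k)\Omega_R^\m$, this subset is precisely $\Omega_R^\m\setminus\bigcup_{(n,k):N(k)>1}(n,k)\Omega_R^\m=(\Omega_R^\m)_0$. Each set $(n,k)\Omega_R^\m$ is open in $\Omega_K^\m$ because the partial action of $(n,k)$ is by a local homeomorphism, so $(\Omega_R^\m)_0$ is a closed, hence compact, subset of $\Omega_K^\m$. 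On the other hand, $c_N^{-1}(0)$ is the full subgroupoid $G_{\m,\Gamma,1}\ltimes\Omega_R^\m$ of $\mathcal{G}$, and its reduction to $(\Omega_R^\m)_0$ is by definition the partial transformation groupoid $G_{\m,\Gamma,1}\ltimes(\Omega_R^\m)_0$ appearing in the statement.

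The key technical check I would have to perform is that $(\Omega_R^\m)_0$ is invariant under the partial action of $G_{\m,\Gamma,1}$, so that the reduction really is a subgroupoid. Suppose $(n,k)\in G_{\m,\Gamma,1}$, $w\in(\Omega_R^\m)_0$, and $(n,k)w\in\Omega_R^\m$. If, for contradiction, $(n,k)w$ lay in $(n',k')\Omega_R^\m$ for some $(n',k')\in G_{\m,\Gamma}$ with $N(k')>1$, then $w=\big((n,k)^{-1}(n',k')\big)w''$ for some $w''\in\Omega_R^\m$, and the $K_{\m,\Gamma}$-component of $(n,k)^{-1}(n',k')$ has norm $N(k')/N(k)=N(k')>1$, contradicting $w\in(\Omega_R^\m)_0$. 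Hence $(n,k)w\in(\Omega_R^\m)_0$, giving the required invariance.

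With these identifications in hand, \cite[Theorem~1.9]{LLN:2019} delivers the affine isomorphism directly: a state $\psi$ on $C^*(G_{\m,\Gamma,1}\ltimes(\Omega_R^\m)_0)$ corresponds to the ground state $\phi_\psi$ of $(C^*(\mathcal{G}),\mathbb{R},\sigma^{c_N})$ determined on $C_c(\mathcal{G})$ by $\phi_\psi(f)=\psi(f\vert_{G_{\m,\Gamma,1}\ltimes(\Omega_R^\m)_0})$, this being the restriction of $f$ to the boundary subgroupoid. The main obstacle I anticipate is mere bookkeeping: matching our conventions to those of \cite{LLN:2019} (the sign of $c_N$, and whether ``boundary'' is defined via source or range of positive-cocycle arrows), and confirming that the few topological hypotheses of that theorem are satisfied — all of which follow from the second-countable, étale, Hausdorff structure of $\mathcal{G}$ established in Section~\ref{subsec:prep}.
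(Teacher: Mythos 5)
Your proposal is correct and takes exactly the same route as the paper, whose entire proof reads ``This is a direct application of \cite[Theorem~1.9]{LLN:2019}.'' The extra details you supply --- identifying the boundary unit space with $(\Omega_R^\m)_0$, noting it is compact because each $(n,k)\Omega_R^\m$ is open, and verifying $G_{\m,\Gamma,1}$-invariance of $(\Omega_R^\m)_0$ --- are sound and simply make explicit what the paper leaves to the reader.
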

\begin{proof}
This is a direct application of \cite[Theorem~1.9]{LLN:2019}.
\end{proof}

We are now ready for the proof of Theorem~\ref{thm:phasetransitions}(iv).

\begin{proof}[Proof of Theorem~\ref{thm:phasetransitions}(iv)]
For each class $\k\in\I_\m/i(K_{\m,\Gamma})$, let $\a_{\k,1},...,\a_{\k,k_\k}$ denote the norm-minimizing ideals in $\k$. In light of Proposition~\ref{prop:boundarygpoid}, it suffices to prove that there is an isomorphism
\[
C^*(G_{\m,\Gamma,1}\ltimes (\Omega_R^\m)_0)\cong \bigoplus_{\k\in \I_\m/i(K_{\m,\Gamma})} M_{k_\k\cdot N(\a_{\k,1})}(C^*(\a_{\k,1}\rtimes R_{\m,\Gamma}^*)).
\]
 We first claim that
\[
(\Omega_R^\m)_0=\{[\mathbf{b},\bar{\mathbf{a}}]\in \Omega_R^\m : \bar{\mathbf{a}}=\a_{\k,j}\text{ for some } \k\in \I_\m/i(K_{\m,\Gamma}), 1\leq j\leq k_\k\}.
\]
	
``$\subseteq$'': For each prime $\p\in\P_K^\m$, let $f_\p$ denote the order of the class $[\p]$ of $\p$ in $\I_\m/i(K_{\m,\Gamma})$, so that there exists $t_\p\in R_{\m,\Gamma}$ such that $\p^{f_\p}=t_\p R$. 
Let $[\mathbf{b},\bar{\mathbf{a}}]\in\Omega_R^\m$, and suppose that $v_\p(\bar{\mathbf{a}})=\infty$ for some $\p\in\P_K^\m$, so that $t_\p\bar{\mathbf{a}}=\bar{\mathbf{a}}$. By the strong approximation theorem, there exists $x\in R_\m^{-1}R_\m$ such that $v_\p(x+\mathbf{b})\geq f_\p$. Then $t_\p^{-1}(x+\mathbf{b})\in\hat{R}_S$, and 
\[
[\mathbf{b},\bar{\mathbf{a}}]=(-x,t_\p)[t_\p^{-1}(x+\mathbf{b}),\bar{\mathbf{a}}]\in (-x,t_\p)\Omega_R^\m.
\]
Since $N(t_\p)=f_\p>1$, we see that $[\mathbf{b},\bar{\mathbf{a}}]\notin(\Omega_R^\m)_0$. Therefore, if $[\mathbf{b},\bar{\mathbf{a}}]\in(\Omega_R^\m)_0$, then $v_\p(\bar{\mathbf{a}})<\infty$ for every $\p\in\P_K^\m$.
 
Next, we will show that if $[\mathbf{b},\bar{\mathbf{a}}]\in(\Omega_R^\m)_0$, then $\bar{\mathbf{a}}$ is divisible by only finitely many primes. Suppose $[\mathbf{b},\bar{\mathbf{a}}]\in\Omega_R^\m$ is such that $\{\p\in\P_K^\m : v_\p(\bar{\mathbf{a}})>0\}$ is infinite. Then there are finitely many distinct primes $\p_1,\p_2,...,\p_N$ in $\{\p\in\P_K^\m : v_\p(\bar{\mathbf{a}})>0\}$ such that the ideal $\prod_{j=1}^N\p_j$ is principal. Let $a\in R_\m$ be such that $aR=\prod_{j=1}^N\p_j$. Then $a^{-1}\bar{\mathbf{a}}$ lies in $\hat{R}_S$, and by the strong approximation theorem, there exists $x\in R_\m^{-1}R$ such that $v_\p(a^{-1}(x+\mathbf{b}))\geq 0$ for every $\p\in\P_K^\m$, so that $a^{-1}(x+\mathbf{b})\in\hat{R}_S$.  Now 
\[
[\mathbf{b},\bar{\mathbf{a}}]=(-x,a)[a^{-1}(x+\mathbf{b}),a^{-1}\bar{\mathbf{a}}]\in (-x,a)\Omega_R^\m.
\]
Since $N(a)>1$, we see that $[\mathbf{b},\bar{\mathbf{a}}]\notin(\Omega_R^\m)_0$. Thus, if $[\mathbf{b},\bar{\mathbf{a}}]\in(\Omega_R^\m)_0$, then $v_\p(\bar{\mathbf{a}})=0$ for all but finitely many $\p\in\P_K^\m$.

The above two facts imply that if $[\mathbf{b},\bar{\mathbf{a}}]\in(\Omega_R^\m)_0$, then $\bar{\mathbf{a}}=\a$ for some $\a\in\I_\m^+$. In this case, there exist $k\in K_{\m,\Gamma}$ and $1\leq j\leq k_{[\a]}$ such that $\a=k\a_{\k,j}$, and there exists $x\in R$ such that $[\mathbf{b},\a]=[x,\a]$. It remains to show that $k$ has norm $1$. Since $(\Omega_R^\m)_0$ is $G_{\m,\Gamma,1}$-invariant, we see that $(0,k)[\mathbf{0},\a_{\k,j}]=[\mathbf{0},k\a_{\k,j}]=(-x,1)[x,k\a_{\k,j}]$ lies in $(\Omega_R^\m)_0$, so we must have $N(k)=1$, that is, $\a$ must be norm-minimizing in $\k$. This finishes our proof of the first inclusion.

``$\supseteq$'':  Let $\k\in\I_\m/i(K_{\m,\Gamma})$, $1\leq j\leq k_\k$, and suppose that $[x,\a_{\k,j}]\in  \Omega_R^\m\cap (n,k)\Omega_R^\m$ for some $(n,k)\in G_{\m,\Gamma}$. There exists an integral ideal $\b\in\k$ such that $\a_{\k,j}=k\b$, so minimality of $N(\a_{\k,j})$ forces $N(k)=1$. This shows the reverse inclusion and concludes the proof of our claim.

In particular, the above claim shows that $(\Omega_R^\m)_0$ is a finite set. Moreover, there is a $G_{\m,\Gamma}$-equivariant decomposition
\[
	(\Omega_R^\m)_0=\bigsqcup_{\k\in \I_\m/i(K_{\m,\Gamma})}X_\k
\]
where $X_\k=\{[x,\a_{\k,j}] : x\in  R/\a_{\k,j}, j=1,...,k_\k\}$ is the orbit of any $[b,\a_{\k,j}]$ under the partial action of $G_{\m,\Gamma,1}$; it follows that we have the direct sum decomposition
\[
	C^*(G_{\m,\Gamma,1}\ltimes  (\Omega_R^\m)_0)\cong \bigoplus_{\k\in\I_\m/i(K_{\m,\Gamma})} C^*(G_{\m,\Gamma,1}\ltimes X_\k).
\]
For each class $\k$, $G_{\m,\Gamma,1}\ltimes X_\k$ is a transitive groupoid, and the isotropy group of the point $[0,\a_{\k,1}]$ is $\a_{\k,1}\rtimes R_{\m,\Gamma}^*$; therefore, it follows that $C^*(G_{\m,\Gamma,1}\ltimes X_\k)\cong M_{|X_\k|}(C^*(\a_{\k,1}\rtimes R_{\m,\Gamma}^*))$, see, for example, \cite[Theorem~3.1]{MRW:1987}. Since $|X_\k|=k_\k\cdot N(\a_{\k,1})$, we are done.
\end{proof}

\section{Type III$_1$ factors and the distribution of prime ideals in $\I_\m/i(K_{\m,\Gamma})$}\label{sec:typeIII1}

Each extremal $\sigma$-KMS$_\beta$ state $\phi$ on $C_\lambda^*(R\rtimes R_{\m,\Gamma})$ is a factor state, that is, the von Neumann algebra $\pi_\phi(C_\lambda^*(R\rtimes R_{\m,\Gamma}))''$ generated by the GNS representation $\pi_\phi$ of $\phi$ is a factor, see \cite[Theorem~5.3.30(3)]{BR:1997}.
It is therefore a natural problem to determine the type of the factors arising from extremal $\sigma$-KMS$_\beta$ states on $C_\lambda^*(R\rtimes R_{\m,\Gamma})$. The main result of this section is the following theorem, which, in light of the uniqueness of the injective factor of type III$_1$ with separable predual, see \cite{Con:1976} and \cite{Haa:1987}, completes the proof of Theorem~\ref{thm:phasetransitions}(ii).

\begin{theorem}\label{thm:typeIII1}
For each $\beta\in [1,2]$, let $\pi_{\phi_\beta}$ be the GNS representation of the $\sigma$-KMS$_\beta$ state $\phi_\beta$ on $C_\lambda^*(R\rtimes R_{\m,\Gamma})$ from Theorem~\ref{thm:phasetransitions}. Then the von Neumann algebra $\pi_{\phi_\beta}(C_\lambda^*(R\rtimes R_{\m,\Gamma}))''$ is an injective factor of type III$_1$ with separable predual. 
\end{theorem}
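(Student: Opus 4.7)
The plan is to verify the four properties that, via the Connes--Haagerup uniqueness theorem \cite{Con:1976,Haa:1987}, characterize the injective factor of type III$_1$ with separable predual: separable predual, injectivity, the factor property, and type III$_1$.

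\textbf{The three soft properties.} I would first translate the problem via Proposition~\ref{prop:gpoidmodel} and Lemma~\ref{lem:gpdmodel}, so that $\pi_{\phi_\beta}(C_\lambda^*(R\rtimes R_{\m,\Gamma}))''$ is the weak closure of the image of $C^*(G_{\m,\Gamma}\ltimes\Omega_R^\m)$ under the GNS representation of $\mu_\beta\circ\E$. Separability of the predual is immediate from separability of the C*-algebra. Injectivity follows from (topological) amenability of the transformation groupoid $G_{\m,\Gamma}\ltimes\Omega_K^\m$---inherited from the fact that $G_{\m,\Gamma}$ is abelian, hence amenable---which passes to the reduction $G_{\m,\Gamma}\ltimes\Omega_R^\m$ and yields nuclearity. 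Finally, the factor property is automatic because uniqueness of $\phi_\beta$ forces extremality in the KMS$_\beta$ simplex \cite[Theorem~5.3.30(3)]{BR:1997}; equivalently, it follows from ergodicity of the partial $G_{\m,\Gamma}$-action on $(\Omega_R^\m,\mu_\beta)$, which is essentially Proposition~\ref{prop:ergodicity}.

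\textbf{Reduction to a ratio-set computation.} Because the isotropy of the partial action on $(\Omega_R^\m,\mu_\beta)$ was shown to be $\mu_\beta$-trivial in the proof of part~(ii), the von Neumann algebra identifies with the partial crossed product $L^\infty(\Omega_R^\m,\mu_\beta)\rtimes G_{\m,\Gamma}$, and the modular automorphism group of the normal extension of $\phi_\beta$ is conjugate (after the standard rescaling $t\mapsto -t/\beta$) to the time evolution $\sigma$. By Tomita--Takesaki and Krieger's classification, the type is encoded in the asymptotic ratio set $r_\infty$ of the partial action weighted by the Radon--Nikodym cocycle $N^{-\beta}$, and type III$_1$ amounts to $r_\infty=[0,\infty)$. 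Concretely, one must establish that for every $\lambda>0$, every $\varepsilon>0$, and every Borel $A\subseteq\Omega_R^\m$ with $\mu_\beta(A)>0$, there exists $(n,k)\in G_{\m,\Gamma}$ with $|N(k)^{-\beta}-\lambda|<\varepsilon$ and $\mu_\beta\bigl(A\cap(n,k)^{-1}A\bigr)>0$.

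\textbf{Number-theoretic input.} The density of $\{N(k)^{-\beta}:k\in K_{\m,\Gamma}\}$ in $(0,\infty)$ is supplied by the prime ideal theorem for the finite group $\I_\m/i(K_{\m,\Gamma})$: primes equidistribute over the ideal classes, so in particular the trivial class contains infinitely many prime ideals, each of the form $pR$ with $p\in R_{\m,\Gamma}$. Using the $\mathbb{Q}$-linear independence of logarithms of distinct rational primes, the $\mathbb{Z}$-span of $\{\log N(p):p\in R_{\m,\Gamma}\text{ prime}\}$ is dense in $\RR$, giving density of the cocycle values.

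\textbf{The main obstacle.} The crux is upgrading this density to the return-set condition. Following the scheme of \cite{LN:2011} and \cite{LagNesh:2014}, I would exploit the product decomposition $\mu_\beta=\pi_*(m\times\nu_{\beta-1})$ from Proposition~\ref{prop:disintegration}: additive translations $(n,1)$ preserve the Haar factor $m$ and so preserve positive measure of return sets on the additive coordinate, while the multiplicative factor $\nu_{\beta-1}$ has an explicit product structure over primes that permits a Borel--Cantelli style argument. Given a target $\lambda$, one produces $k\in K_{\m,\Gamma}$ with $N(k)^{-\beta}$ close to $\lambda$ by multiplying and dividing a coarse approximant by prime elements in the trivial class of $\I_\m/i(K_{\m,\Gamma})$---the prime ideal theorem for classes furnishes these in abundance, with the asymptotic density required to control the ensuing measure-theoretic estimates. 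Ergodicity (Proposition~\ref{prop:ergodicity}) then guarantees that $A$ cannot avoid $k^{-1}A$ in a non-negligible way, yielding $\mu_\beta(A\cap k^{-1}A)>0$. This is the step I expect to be hardest, since it requires coordinating delicate analytic number theory for $\I_\m/i(K_{\m,\Gamma})$ with fine measure-theoretic control on $\Omega_R^\m$; the generalization from the rational case \cite{LN:2011} and the totally positive case implicit in \cite{Nesh:2013,LagNesh:2014} to arbitrary congruence moduli $\m$ and subgroups $\Gamma$ is precisely where the $L$-function input has to be strengthened. Once $r_\infty=[0,\infty)$ is secured, Krieger's theorem gives type III$_1$, and combined with the three soft properties the Connes--Haagerup theorem completes the proof.
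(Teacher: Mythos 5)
Your overall architecture coincides with the paper's: the soft properties (separable predual, injectivity from amenability of $G_{\m,\Gamma}$, factoriality from uniqueness of $\phi_\beta$), followed by a reduction of the type computation to an asymptotic ratio set / flow of weights calculation driven by the prime ideal theorem for classes in $\I_\m/i(K_{\m,\Gamma})$ and the product structure $\mu_\beta=\pi_*(m\times\nu_{\beta-1})$. The decisive step, however, contains a genuine gap. You produce a single $k\in K_{\m,\Gamma}$ with $N(k)^{-\beta}$ close to $\lambda$ and then assert that ergodicity ``guarantees that $A$ cannot avoid $k^{-1}A$ in a non-negligible way, yielding $\mu_\beta(A\cap k^{-1}A)>0$.'' This is false: ergodicity of the group action says nothing about the return set of a \emph{fixed} group element, and for a given positive-measure set $A$ the intersection $A\cap k^{-1}A$ can perfectly well be null for your chosen $k$. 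The ratio-set condition demands that \emph{every} positive-measure $A$ admit \emph{some} element with Radon--Nikodym value near $\lambda$ returning a positive-measure piece of $A$ into $A$, and the route to this is the Araki--Woods/Krieger machinery for product-type cocycles: one needs not one $k$ but an infinite family of coordinate swaps $\p_n\leftrightarrow\q_n$ at distinct primes, all with $N(\q_n)/N(\p_n)$ near the target ratio, together with the divergence $\sum_n N(\p_n)^{-\beta}=\infty$, so that almost every point admits infinitely many admissible swaps and hence every positive-measure set is reached. This is precisely the content of the paper's Lemma~\ref{lem:asymptotics} combined with \cite[Proposition~2.2]{LagNesh:2014} and the asymptotic range formalism of \cite[Section~8]{FM:1977}; it cannot be replaced by an appeal to ergodicity.

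A secondary omission: under the partial action of $K_{\m,\Gamma}$ on $\hat{R}_S/\hat{R}_S^*$ the orbit relation is only a subrelation of the tail (full $\I_\m$-orbit) relation, which obstructs a clean product-type analysis. The paper circumvents this by inducing to an action of the lattice $\I_\m$ on $\I_\m/i(K_{\m,\Gamma})\times\mathbb{R}_+^*\times\AA_S/\hat{R}_S^*$ and descending via Lemma~\ref{lem:general}, computing the asymptotic range of a cocycle valued in $\I_\m/i(K_{\m,\Gamma})\times\mathbb{R}_+^*$; this is why Lemma~\ref{lem:asymptotics} requires prime pairs with $\q_n\p_n^{-1}$ in an arbitrary prescribed class $\k$, not merely principal primes in the trivial class as in your sketch. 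Your treatment of the additive coordinate via strong approximation and of the case $\beta=1$ (where $\nu_0=\delta_{\bar{\mathbf{0}}}$ and density of the norm values suffices) is consistent with the paper.
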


\begin{remark}\label{rmk:typeIII1}
It follows from \cite[Theorem~3.2]{LN:2011} that, for each $\beta\in[1,2]$, the $\sigma$-KMS$_\beta$ state on $C_\lambda^*(\mathbb{Z}\rtimes\mathbb{N}^\times)$ is of type III$_1$ . Moreover, it is asserted in \cite[Section~3]{Nesh:2013} that arguments analogous to those used to prove \cite[Theorem~3.2]{LN:2011} combined with \cite[Corollary~3.2]{Nesh:2011} can be used to show that, for each $\beta\in[1,2]$, the $\sigma$-KMS$_\beta$ state on $C_\lambda^*(R\rtimes R^\times)$ is of type III$_1$. 

In our more general situation, there are additional difficulties which we will overcome by using techniques from \cite[Sections~2\&3]{LagNesh:2014}. 
\end{remark}

The remainder of this section is devoted to the proof of Theorem~\ref{thm:typeIII1}.

We now briefly recall some well-known results about the flow of weights on von Neumann algebra crossed products from \cite{CT:1977} (see also \cite[Chapter~XIII~\S~2]{Tak:2003}). The general setup here is similar to that in \cite[Section~3]{LN:2011} and \cite[Section~2]{Nesh:2011}, and we will follow the notation therein.

Let $X$ be a second countable, locally compact Hausdorff space and $\mu$ a $\sigma$-finite measure on $X$. Suppose that a countably infinite discrete group $G$ acts by nonsingular transformations on the measure space $(X,\mu)$, that is, $G$ acts on $X$ by Borel automorphisms, and for each $g\in G$, the measures $\mu$ and $g\mu$ are equivalent where $g\mu$ is the push-forward of $\mu$ by $g$ defined by $g\mu(Z):=\mu(g^{-1}Z)$ for every Borel set $Z\subseteq X$. 

Assume that the action of $G$ on $(X,\mu)$ is essentially free and ergodic, so that the von Neumann algebra crossed product $L^\infty(X,\mu)\rtimes G$ is a factor. In this situation, the flow of weights has a particularly explicit description. Indeed, let $\lambda_\infty$ denote the Lebesgue measure on $\mathbb{R}_+^*$; then there are commuting actions of $G$ and $\mathbb{R}$ on $(\mathbb{R}_+^*\times X,\lambda_\infty\times\mu)$ given by 
\[
g(t,x)=\left(\frac{d g\mu}{d\mu}(gx)t,gx\right) \text{ and } s(t,x)=(e^{-s}t,x)\quad \text{for } g\in G, s\in\mathbb{R},\: (t,x)\in \mathbb{R}_+^*\times X,
\]
and the flow of weights on $L^\infty(X,\mu)\rtimes G$ is the induced action of $\mathbb{R}$ on the fixed point algebra $L^\infty(\mathbb{R}_+^*\times X,\lambda_\infty\times\mu)^G$ for the action of $G$ on $L^\infty(\mathbb{R}_+^*\times X,\lambda_\infty\times\mu)$ arising from the above action of $G$ on $(\mathbb{R}_+^*\times X,\lambda_\infty\times\mu)$ (see \cite[Chapter~XIII~\S~2~Theorem~2.23]{Tak:2003}). 
The factor $L^\infty(X,\mu)\rtimes G$ is of type III$_1$ if and only if the action of $G$ on $(\mathbb{R}_+^*\times X,\lambda_\infty\times\mu)$ is ergodic. 

We now turn to the particular case of interest to us. As before, it will be easiest to work with the C*-algebra $C^*(G_{\m,\Gamma}\ltimes\Omega_R^\m)$. Since $\phi_\beta$ factors through the expectation $\E$ onto $C(\Omega_R^\m)$ and is determined by the probability measure $\mu_\beta$, we have the following standard lemma.

\begin{lemma}\label{lem:reducetogroupmeasure}
For each $\beta\in[1,2]$, let $\tilde{\mu}_\beta$ be the unique quasi-invariant measure on $\Omega_K^\m$ that extends $\mu_\beta$ and satisfies the obvious analogue of \eqref{eqn:qi} for the action of $G_{\m,\Gamma}$ on $\Omega_K^\m$. Then
\[
\pi_{\phi_\beta}(C^*(G_{\m,\Gamma}\ltimes\Omega_R^\m))''\cong 1_{\Omega_R^\m}(L^\infty(\Omega_K^\m,\tilde{\mu}_\beta)\rtimes G_{\m,\Gamma})1_{\Omega_R^\m}.
\]
Therefore, if $L^\infty(\Omega_K^\m,\tilde{\mu}_\beta)\rtimes G_{\m,\Gamma}$ is a factor of type III$_1$, then $\pi_{\phi_\beta}(C^*(G_{\m,\Gamma}\ltimes\Omega_R^\m))''$ is also a factor of type III$_1$.
\end{lemma}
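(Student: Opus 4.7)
The plan is to extend $\mu_\beta$ to a quasi-invariant measure $\tilde{\mu}_\beta$ on all of $\Omega_K^\m$, identify the GNS von Neumann algebra for $\phi_\beta$ as a corner of the resulting group-measure-space construction, and then use that type III$_1$ is preserved under cutting by a non-zero projection in a factor.

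For the extension, I would exploit that $\Omega_K^\m = \bigcup_{g \in G_{\m,\Gamma}} g\,\Omega_R^\m$ and partition an arbitrary Borel set of $\Omega_K^\m$ as a countable disjoint union of pieces of the form $g_i B_i$ with $B_i \subseteq \Omega_R^\m$ and $g_i = (n_i,k_i) \in G_{\m,\Gamma}$, then set
\[
\tilde{\mu}_\beta\Bigl(\bigsqcup_i g_i B_i\Bigr) := \sum_i N(k_i)^{-\beta}\,\mu_\beta(B_i).
\]
The scaling relation \eqref{eqn:qi} for $\mu_\beta$ on overlaps inside $\Omega_R^\m$, together with the multiplicativity of the cocycle $(n,k)\mapsto N(k)^{-\beta}$ on $G_{\m,\Gamma}$, will show that this prescription is independent of the chosen partition and yields a $\sigma$-finite Borel measure satisfying the analogue of \eqref{eqn:qi} on all of $\Omega_K^\m$. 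Uniqueness is forced because any such extension must agree with the formula above on every translate $g\,\Omega_R^\m$.

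For the isomorphism, I would invoke the standard fact that since $\Omega_R^\m$ is compact open in $\Omega_K^\m$ and meets every $G_{\m,\Gamma}$-orbit, the indicator $1_{\Omega_R^\m}$ is a full projection in $C_0(\Omega_K^\m)\rtimes G_{\m,\Gamma}$, and the reduction groupoid C*-algebra satisfies
\[
C^*(G_{\m,\Gamma}\ltimes\Omega_R^\m) \;\cong\; 1_{\Omega_R^\m}\bigl(C_0(\Omega_K^\m)\rtimes G_{\m,\Gamma}\bigr)1_{\Omega_R^\m};
\]
this is a Muhly--Renault--Williams-type equivalence between the reduction and the full transformation groupoid. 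The state $\phi_\beta = \mu_\beta\circ\E$ is then the restriction to this corner of the dual weight associated with $\tilde{\mu}_\beta$ on the full crossed product, and passing to weak closures of the respective GNS representations yields the claimed isomorphism of von Neumann algebras. The final implication is immediate, since a non-zero corner of a factor is a factor of the same type. The main technical point will be to match the GNS construction for the (bounded) state $\phi_\beta$ with the spatial dual-weight construction on the larger algebra, but this can be verified on the dense $*$-subalgebra of compactly supported functions on the groupoid, where both sides are described by explicit formulas.
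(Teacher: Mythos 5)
Your proposal is correct and follows exactly the argument the paper has in mind: the paper omits the proof, calling the lemma ``standard,'' and the intended route (as in the cited work of Laca--Neshveyev and Neshveyev) is precisely yours --- extend $\mu_\beta$ to $\tilde{\mu}_\beta$ on $\Omega_K^\m=G_{\m,\Gamma}\cdot\Omega_R^\m$ by the cocycle scaling, identify $C^*(G_{\m,\Gamma}\ltimes\Omega_R^\m)$ with the full corner $1_{\Omega_R^\m}(C_0(\Omega_K^\m)\rtimes G_{\m,\Gamma})1_{\Omega_R^\m}$, and match the GNS construction of $\phi_\beta=\mu_\beta\circ\E$ with the corresponding corner of the group-measure-space von Neumann algebra. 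The paper itself invokes the fullness of $1_{\Omega_R^\m}$ in the proof of Proposition~\ref{prop:factor}, so your argument supplies exactly the details being taken for granted.
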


Hence, to prove Theorem~\ref{thm:typeIII1}, it suffices to show that $L^\infty(\Omega_K^\m,\tilde{\mu}_\beta)\rtimes G_{\m,\Gamma}$ is an injective factor of type III$_1$ with separable predual. 
Since $G_{\m,\Gamma}$ is amenable, $L^\infty(\Omega_K^\m,\tilde{\mu}_\beta)\rtimes G_{\m,\Gamma}$ is injective, and the separability claim is easy to see. This means that we need to prove that $L^\infty(\Omega_K^\m,\tilde{\mu}_\beta)\rtimes G_{\m,\Gamma}$ is a factor of type III$_1$.

\begin{proposition}\label{prop:factor}
For each $\beta\in [1,2]$, the action $G_{\m,\Gamma}\curvearrowright(\Omega_K^\m,\tilde{\mu}_\beta)$ is essentially free and ergodic. Hence, $L^\infty(\Omega_K^\m,\tilde{\mu}_\beta)\rtimes G_{\m,\Gamma}$ is a factor.
\end{proposition}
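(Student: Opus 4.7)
The plan is to verify essential freeness and ergodicity separately, reducing each claim to a statement on $\Omega_R^\m$ or on $\hat R_S/\hat R_S^*$ that is either established in the proof of Theorem~\ref{thm:phasetransitions}(ii) or follows from Proposition~\ref{prop:ergodicity}; the factor conclusion then follows automatically from the general theory of crossed products by essentially free, ergodic, nonsingular group actions recalled just above the proposition. Throughout, the crucial structural input is that $\tilde\mu_\beta$ is concentrated on the $G_{\m,\Gamma}$-saturation of $\Omega_R^\m$, so statements about $\tilde\mu_\beta$ on $\Omega_K^\m$ can be transferred to statements about $\mu_\beta$ on $\Omega_R^\m$ via quasi-invariance and the countability of $G_{\m,\Gamma}$.

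For essential freeness, I would observe that it is enough to show that for every non-identity $\gamma\in G_{\m,\Gamma}$, the set of points in $\Omega_R^\m$ fixed by $\gamma$ is $\mu_\beta$-null: the fixed-point set in $g\,\Omega_R^\m$ is then conjugate via $g^{-1}$ to the fixed-point set of $g^{-1}\gamma g$ in $\Omega_R^\m$, quasi-invariance transports nullity, and countability of $G_{\m,\Gamma}$ handles the union. The required vanishing in $\Omega_R^\m$ was established in the proof of Theorem~\ref{thm:phasetransitions}(ii).

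For ergodicity, I would start with an arbitrary Borel $G_{\m,\Gamma}$-invariant $A\subseteq\Omega_K^\m$. Because $\tilde\mu_\beta$ is concentrated on the $G_{\m,\Gamma}$-saturation of $\Omega_R^\m$, $A$ is determined modulo null sets by $A':=A\cap\Omega_R^\m$, which is invariant under the reduction groupoid $G_{\m,\Gamma}\ltimes\Omega_R^\m$; it thus suffices to prove a zero-one law for $A'$. Using Proposition~\ref{prop:disintegration}, write $\mu_\beta=\pi_*(m\times\nu_{\beta-1})$, and let $q:\Omega_R^\m\to\hat R_S/\hat R_S^*$ be the projection $[\mathbf b,\bar{\mathbf a}]\mapsto\bar{\mathbf a}$. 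Disintegrating over $q$, the conditional measure on the fiber $\hat R_S/\bar{\mathbf a}\hat R_S$ is quotient Haar measure, and $R\leq G_{\m,\Gamma}$ acts on this fiber by translation via its diagonal image, which is dense by strong approximation; dense translations on a compact abelian group are ergodic with respect to Haar measure, so $A'$ is, modulo $\mu_\beta$-null sets, the preimage $q^{-1}(B)$ of some Borel $B\subseteq\hat R_S/\hat R_S^*$ invariant under the partial action of $K_{\m,\Gamma}$ on $(\hat R_S/\hat R_S^*,\nu_{\beta-1})$.

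It remains to show that this partial $K_{\m,\Gamma}$-action is ergodic. For $\beta=1$ this is immediate since $\nu_0=\delta_{\bar{\mathbf 0}}$ is a point mass. For $\beta\in(1,2]$, I would apply Lemma~\ref{lem:1-1} to extend $\nu_{\beta-1}$ to the finite $\I_\m$-quasi-invariant measure $\tilde\nu_{\beta-1}$ on $\I_\m/i(K_{\m,\Gamma})\times\hat R_S/\hat R_S^*$, take the $\I_\m$-saturation $\tilde B$ of $\{[R]\}\times B$ (which is then $\I_\m$-invariant), invoke Proposition~\ref{prop:ergodicity} with parameter $\beta-1\in(0,1]$ to conclude that $\tilde B$ is $\tilde\nu_{\beta-1}$-null or conull, and restrict back to $\{[R]\}\times\hat R_S/\hat R_S^*$, on which $\tilde\nu_{\beta-1}$ restricts to $\nu_{\beta-1}$, to obtain the same dichotomy for $B$ and hence for $A'$. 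The main obstacle I anticipate is precisely the bookkeeping in this last reduction: keeping track of the disintegration, and cleanly translating between invariance of $B$ under the $K_{\m,\Gamma}$-partial action on $\hat R_S/\hat R_S^*$ and invariance of $\tilde B$ under the full $\I_\m$-action on the extended product, so that Proposition~\ref{prop:ergodicity} becomes directly applicable.
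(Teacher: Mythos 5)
Your argument is correct, and for essential freeness it coincides with the paper's (both reduce to the $\mu_\beta$-nullity of fixed-point sets in $\Omega_R^\m$ already established in the proof of Theorem~\ref{thm:phasetransitions}(ii) and transfer by quasi-invariance and countability). For ergodicity, however, you take the route the paper only gestures at in one sentence (``one can argue directly using Proposition~\ref{prop:ergodicity}'') and do not find in the written proof: you disintegrate $\mu_\beta$ over $q:\Omega_R^\m\to\hat R_S/\hat R_S^*$, use density of $R$ in $\hat R_S$ to kill the fiber direction, and then feed the resulting $K_{\m,\Gamma}$-invariant set on the base into Proposition~\ref{prop:ergodicity} via the extension of Lemma~\ref{lem:1-1}; the final transfer from the $\I_\m$-action on $\I_\m/i(K_{\m,\Gamma})\times\hat R_S/\hat R_S^*$ back to the $K_{\m,\Gamma}$-action on the identity-class component is exactly the content of Lemma~\ref{lem:general}, which the paper states for precisely this purpose (it deploys it in Proposition~\ref{prop:kmgergodic}), so you could cite it rather than redo the saturation bookkeeping. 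The paper's actual proof of ergodicity is instead indirect and much shorter: uniqueness of the KMS$_\beta$ state (Theorem~\ref{thm:phasetransitions}(ii)) makes $\phi_\beta$ extremal, hence $\pi_{\phi_\beta}(C^*(G_{\m,\Gamma}\ltimes\Omega_R^\m))''$ is a factor by \cite[Theorem~5.3.30(3)]{BR:1997}, and fullness of the projection $1_{\Omega_R^\m}$ in $C_0(\Omega_K^\m)\rtimes G_{\m,\Gamma}$ then forces $L^\infty(\Omega_K^\m,\tilde\mu_\beta)\rtimes G_{\m,\Gamma}$ to be a factor, which together with essential freeness yields ergodicity. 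The trade-off: the paper's argument recycles work already done and avoids all measure-theoretic bookkeeping, while yours is self-contained at the level of the dynamics, does not pass through the C*-algebraic uniqueness theorem, and is structurally parallel to the later proof of Theorem~\ref{thm:typeIII1} (the same density/strong-approximation reduction of the additive direction), so it arguably makes the logical dependencies more transparent.
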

\begin{proof}
Arguments similar to those used in the proof of \cite[Lemma~3.3]{Nesh:2013} show that the action $G_{\m,\Gamma}\curvearrowright(\Omega_K^\m,\tilde{\mu}_\beta)$ is essentially free; note we have already made this observation in the proof of the uniqueness statement in Theorem~\ref{thm:phasetransitions}(ii).

One can argue directly using Proposition~\ref{prop:ergodicity} to show that the action $G_{\m,\Gamma}\curvearrowright(\Omega_K^\m,\tilde{\mu}_\beta)$ is ergodic. 
Alternatively, since Theorem~\ref{thm:phasetransitions}(ii) says that the state $\phi_\beta$ is the unique $\sigma$-KMS$_\beta$ state on $C^*(G_{\m,\Gamma}\ltimes\Omega_R^\m)$, \cite[Theorem~5.3.30(3)]{BR:1997} implies that $\pi_{\phi_\beta}(C^*(G_{\m,\Gamma}\ltimes\Omega_R^\m))''$ is a factor. 
Since $1_{\Omega_R^\m}$ is a full projection in $C_0(\Omega_K^\m)\rtimes G_{\m,\Gamma}$, it follows that $L^\infty(\Omega_K^\m,\tilde{\mu}_\beta)\rtimes G_{\m,\Gamma}$ is also a factor. 
Thus, the action $G_{\m,\Gamma}\curvearrowright(\Omega_K^\m,\tilde{\mu}_\beta)$ is ergodic.  
\end{proof} 

The following lemma on primes in ideal classes from $\I_\m/i(K_{\m,\Gamma})$ is the key number-theoretic result needed to compute the flow of weights on $L^\infty(\Omega_K^\m,\tilde{\mu}_\beta)\rtimes G_{\m,\Gamma}$. It is a generalization of \cite[Lemma~3.3]{Nesh:2011}. 

\begin{lemma}\label{lem:asymptotics}
Fix $\beta\in(0,1]$ and fix a class $\k\in\I_\m/i(K_{\m,\Gamma})$. For each $\lambda>1$ and each $\epsilon>0$, there exist sequences $(\p_n)_{n\geq 1}$ and $(\q_n)_{n\geq 1}$, each consisting of distinct prime ideals in $\P_K^\m$, such that 
\begin{equation}\label{eqn:asymptotics}
\left|\frac{N(\q_n)^\beta}{N(\p_n)^\beta}-\lambda\right|<\epsilon, \; \q_n\p_n^{-1}\in\k\text{ for } n\geq 1, \text{ and } \sum_{n=1}^\infty N(\p_n)^{-\beta}=\infty.
\end{equation}
\end{lemma}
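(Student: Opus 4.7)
The plan is to deduce this from the generalized prime ideal theorem (PIT) for the finite group $\I_\m/i(K_{\m,\Gamma})$ (a quotient of the ray class group $\Cl_\m(K)$), combined with a block-by-block greedy matching argument that handles the simultaneous constraints of distinctness and of divergent series. The key input is the asymptotic
\[
\pi(x;\tilde{\k}) := \#\{\p \in \P_K^\m : [\p] = \tilde{\k},\ N(\p) \le x\} \sim \frac{1}{h}\cdot\frac{x}{\log x} \qquad (x \to \infty),
\]
where $h := |\I_\m/i(K_{\m,\Gamma})|$, valid for every class $\tilde{\k}$. Subtracting this at $y(1\pm\eta)$ immediately yields the short-interval version $\pi(y(1+\eta);\tilde{\k})-\pi(y(1-\eta);\tilde{\k}) \sim 2\eta y/(h\log y)$ for each fixed $\eta > 0$.

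Fix an auxiliary class $\k_0$ (for instance $\k_0 = [R]$) and set $\k_0' := \k_0 \cdot \k$. Choose $\delta > 0$ so that $|y^\beta - \lambda| < \epsilon$ whenever $|y - \lambda^{1/\beta}| < \delta$. For each $j \ge 1$ let $M_j := 2^j$ and form
\[
P_j := \{\p : [\p] = \k_0,\ N(\p) \in [M_j, 2M_j]\},\qquad Q_j(\p) := \{\q : [\q] = \k_0',\ N(\q) \in [N(\p)(\lambda^{1/\beta}-\delta),\, N(\p)(\lambda^{1/\beta}+\delta)]\}.
\]
The short-interval PIT provides, for $j$ sufficiently large, lower bounds $|P_j| \ge c_1 M_j/j$ and $|Q_j(\p)| \ge c_2 M_j/j$ for every $\p \in P_j$, with $c_1, c_2 > 0$ depending only on $\delta, \lambda, \beta, h$. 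I will then run a greedy matching inside each block: enumerate $P_j$ as $\p_1^{(j)}, \p_2^{(j)}, \ldots$, and at step $i$ pick any $\q_i^{(j)} \in Q_j(\p_i^{(j)})$ distinct from $\q_1^{(j)}, \dots, \q_{i-1}^{(j)}$. This succeeds whenever $i \le |Q_j(\p_i^{(j)})|$, hence for all $i \le r_j := \min(|P_j|, \lfloor c_2 M_j/j \rfloor) \asymp M_j/j$. Because the norm-ranges in distinct blocks are disjoint, concatenating over $j$ keeps the full $\p$-sequence and $\q$-sequence each distinct; by construction $[\q_i^{(j)}(\p_i^{(j)})^{-1}] = \k_0'\k_0^{-1} = \k$ and $|N(\q_i^{(j)})^\beta/N(\p_i^{(j)})^\beta - \lambda| < \epsilon$.

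The divergence of $\sum N(\p_n)^{-\beta}$ then comes from the block contribution $r_j (2M_j)^{-\beta} \asymp M_j^{1-\beta}/j = 2^{j(1-\beta)}/j$, whose sum over $j$ diverges for every $\beta \in (0,1]$ (it reduces to $\sum 1/j$ at $\beta = 1$ and to a geometric-like series when $\beta < 1$). The main delicate point is the greedy matching itself: when $\delta$ is small and $\lambda^{1/\beta}$ is close to $1$, the constant $c_2$ is small and forces us to trim $P_j$ down to size $r_j$ rather than use all of it; verifying that this truncation still leaves enough primes to make the series diverge is exactly what the block-size estimate $|Q_j(\p)| \asymp M_j/j$ from the short-interval PIT supplies. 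All uniformity issues over the block $[M_j, 2M_j]$ are automatic since the intervals appearing in $Q_j(\p)$ have length proportional to $M_j$.
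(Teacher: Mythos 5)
Your overall strategy is the same as the paper's: both proofs rest on the prime ideal theorem for the classes of the finite group $\I_\m/i(K_{\m,\Gamma})$ (equidistribution $\pi(x;\tilde{\k})\sim \frac{1}{h}\frac{x}{\log x}$), and both produce the pairs $(\p_n,\q_n)$ by counting primes of class $[R]$ and of class $\k$ in suitable norm windows whose ratio is pinned near $\lambda^{1/\beta}$. The analytic core (divergence of $\sum_j M_j^{1-\beta}/j$, reduction of the class condition to $[\q][\p]^{-1}=\k$) is fine, and your direct treatment of general $\beta\in(0,1]$ via continuity of $y\mapsto y^\beta$ is a harmless variant of the paper's reduction to $\beta=1$.

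There is, however, one concrete gap: your claim that ``the norm-ranges in distinct blocks are disjoint,'' which is what you use to conclude that the concatenated $\q$-sequence has distinct terms, is false for the $\q$-windows. For $\p\in P_j$ the primes $\q$ you select lie in $\bigl[M_j(\lambda^{1/\beta}-\delta),\,2M_j(\lambda^{1/\beta}+\delta)\bigr]$, while for block $j+1$ they lie in $\bigl[2M_j(\lambda^{1/\beta}-\delta),\,4M_j(\lambda^{1/\beta}+\delta)\bigr]$; these overlap on an interval of length $4\delta M_j$ for every $\delta>0$, so a prime in that overlap can be chosen in both blocks and the distinctness of $(\q_n)$ is not established. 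The flaw is local and repairable: either space the blocks so that $M_{j+1}/M_j$ exceeds $(\lambda^{1/\beta}+\delta)/(\lambda^{1/\beta}-\delta)\cdot 2$ (the divergence estimate survives any geometric spacing), or run the greedy selection globally, noting that at most $r_{j-1}=O(M_{j-1}/(j-1))=O(M_j/j)$ previously used primes can intrude into block $j$, which only halves the usable constant $c_2$. For comparison, the paper avoids the issue by construction: it takes the disjoint windows $(\lambda^{n},(1+\delta)\lambda^{n}]$ with $1+\delta<\lambda$, places the class-$[R]$ primes in the even-indexed windows and the class-$\k$ primes in the odd-indexed ones, and matches them by sorting by norm, so that the two sequences are automatically injective (and even disjoint from each other); this dispenses with the per-prime windows and the greedy step entirely, at the cost of only getting the ratio within a multiplicative factor $(1+\delta)^{\pm1}$ of $\lambda$, which the choice $\delta\lambda<\epsilon$ converts into the required additive bound.
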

\begin{proof}
The proof is similar to that of \cite[Lemma~3.3]{Nesh:2011}; it follows ideas from \cite{BZ:2000} and \cite{Bla:1977} (also see the proof of \cite[Theorem~1.2]{LagNesh:2014} for number fields).

The case where $\beta\in (0,1)$ follows from the case $\beta=1$, so it suffices to consider only the case $\beta=1$. Choose $\delta>0$ such that $1+\delta<\lambda$ and $\delta\lambda<\epsilon$. Define sets $B_n$ by
\begin{align*}
B_{2k}&:=\{\p\in\P_K^\m : \lambda^{2k}<N(\p)\leq(1+\delta)\lambda^{2k}, \p\in [R]\},\\
B_{2k+1}&:=\{\p\in\P_K^\m : \lambda^{2k+1}<N(\p)\leq(1+\delta)\lambda^{2k+1}, \p\in \k\}.
\end{align*}
By our choice of $\delta$, these sets are pairwise disjoint. For a class $\tilde{\k}\in \I_\m/i(K_{\m,\Gamma})$ and $x>0$, let 
\[
\pi_{\tilde{\k}}(x):=|\{\p\in\tilde{\k} : \p \text{ prime and } N(\p)\leq x\}|
\] 
be the number of prime ideals in the class $\tilde{\k}$ whose norms do not exceed $x$. Given functions $f$ and $g$, we shall write $f(x)\sim g(x)\text{ as } x\to\infty$ if $g(x)$ is non-zero for all sufficiently large $x$ and $\lim_{x\to\infty}\frac{f(x)}{g(x)}=1$. Note that this is equivalent to $f(x)-g(x)=o(g(x))$.
Now \cite[Chapter~VIII,~Theorem~7.2]{Mil:2013} combined with \cite[Chapter~7,~Proposition~7.17]{Nar:2004} imply that
\[
\pi_{\tilde{\k}}(x)\sim \frac{1}{h}\frac{x}{\log x} \text{ as } x\to\infty
\]
where $h:=|\I_\m/i(K_{\m,\Gamma})|$. Since we have 
\[
\frac{(1+\delta)x}{\log((1+\delta)x)}-\frac{x}{\log x}\sim \frac{\delta x}{\log x} \text{ as } x\to\infty,
\]
 it follows that
\[
\pi_{\tilde{\k}}((1+\delta)x)-\pi_{\tilde{\k}}(x)\sim \frac{\delta}{h}\frac{x}{\log x} \text{ as } x\to\infty.
\]
As this holds for every class $\tilde{\k}$, we have 
\begin{equation}\label{eqn:Bnasymptotics}
|B_n|\sim \frac{\delta}{h}\frac{\lambda^n}{n\log \lambda}\text{ as } n\to\infty.
\end{equation}
Thus, there exists $k_0$ such that $|B_{2k+1}|\geq |B_{2k}|$ for all $k\geq k_0$. Now, for each $k\geq k_0$, we can choose a subset $C_{2k+1}\subseteq B_{2k+1}$ such that $|C_{2k+1}|=|B_{2k}|$. Let $\p_1,\p_2,...$ and $\q_1,\q_2,...$ be enumerations of the sets $\bigcup_{k\geq k_0}B_{2k}$ and $\bigcup_{k\geq k_0}C_{2k+1}$, respectively, such that $N(\p_1)\leq N(\p_2)\leq\cdots$, and $N(\q_1)\leq N(\q_2)\leq\cdots$. Then if $\p_n\in B_{2k}$ for some $k\geq k_0$, we must have $\q_n\in B_{2k+1}$, in which case by our choice of $\delta$, we have
\[
N(\q_n\p_n^{-1})\in (\lambda-\epsilon,\lambda+\epsilon) \text{ and } \q_n\p_n^{-1}\in\k[R]=\k.
\]
Moreover, using \eqref{eqn:Bnasymptotics}, we see that
\[
\sum_{n=1}^\infty N(\p_n)^{-1}\geq \sum_{k=k_0}^\infty \frac{|B_{2k}|}{(1+\delta)\lambda^{2k}}=\infty.
\]
Therefore, the sequences of primes $(\p_n)_{n\geq 1}$ and $(\q_n)_{n\geq 1}$ satisfy the desired properties.
\end{proof}

Our next step is an ergodicity result that will also be used in Section~\ref{sec:multiplicativemonoid}. 
We shall need a general lemma, which we state here in the level of generality from our discussion of von Neumann algebra crossed products. Its proof is routine, so we omit it.
\begin{lemma}
\label{lem:general}
Let $X$ be a second countable, locally compact Hausdorff space and $\mu$ a $\sigma$-finite Borel measure on $X$. Suppose that a countable discrete group $G$ acts on $(X,\mu)$ by nonsingular transformations. Let $H$ be a finite index subgroup of $G$, and assume that $\tilde{\mu}$ is a measure on $G/H\times X$ such that the diagonal action $G\curvearrowright (G/H\times X,\tilde{\mu})$ is nonsingular ergodic and the restriction of $\tilde{\mu}$ to $\{H\}\times X$ coincides with $\mu$. Then the action $H\curvearrowright (X,\mu)$, obtained from the action of $G$ on $(X,\mu)$, is ergodic.
\end{lemma}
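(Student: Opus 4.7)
The plan is to reduce ergodicity of $H\curvearrowright (X,\mu)$ to the hypothesised ergodicity of the diagonal action of $G$ on $(G/H\times X,\tilde\mu)$ by lifting an arbitrary $H$-invariant subset of $X$ to a $G$-invariant subset of $G/H\times X$ whose slice over the coset $H$ is the original set. Since the slice measure on $\{H\}\times X$ is $\mu$ by hypothesis, ergodicity will then transfer back to $X$.

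First I would fix a Borel $H$-invariant set $A\subseteq X$. The key observation is that $H$-invariance of $A$ (mod $\mu$-null sets) makes the formula $gH\mapsto gA$ unambiguous: if $g'=gh$ for some $h\in H$, then $g'A=ghA=gA$. This lets me define
\[
\tilde A := \bigsqcup_{gH\in G/H}\{gH\}\times gA \;\subseteq\; G/H\times X,
\]
which is Borel because $[G:H]$ is finite and each element of $G$ acts on $X$ by a nonsingular Borel automorphism. Next I would verify invariance under the diagonal action $g_0\cdot(gH,x)=(g_0gH,g_0x)$: if $x\in gA$ then $g_0x\in g_0gA$, so $g_0\tilde A\subseteq \tilde A$, and symmetrically $\tilde A$ is $G$-invariant.

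Then I would invoke ergodicity of $G\curvearrowright(G/H\times X,\tilde\mu)$ to conclude that either $\tilde\mu(\tilde A)=0$ or $\tilde\mu((G/H\times X)\setminus\tilde A)=0$. Intersecting with the distinguished slice gives $\tilde A\cap(\{H\}\times X)=\{H\}\times A$ (using $H\cdot A=A$), and the assumption $\tilde\mu\vert_{\{H\}\times X}=\mu$ translates the above dichotomy into $\mu(A)=0$ or $\mu(X\setminus A)=0$, yielding the desired ergodicity of $H\curvearrowright(X,\mu)$. There is no real obstacle here; the only point requiring care is the well-definedness of the map $gH\mapsto gA$, which is precisely what $H$-invariance of $A$ supplies. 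All remaining verifications (Borel measurability of $\tilde A$, $G$-invariance, and measure-theoretic bookkeeping on the slice) are routine.
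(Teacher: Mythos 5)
Your proof is correct. The paper omits the proof of this lemma as routine, and your argument --- lifting an $H$-invariant set $A\subseteq X$ to the $G$-invariant set $\bigsqcup_{gH\in G/H}\{gH\}\times gA$, invoking ergodicity of the diagonal action, and reading off the dichotomy on the slice $\{H\}\times X$ where $\tilde\mu$ restricts to $\mu$ --- is exactly the standard argument the authors have in mind. The only point worth making fully explicit is that if $A$ is $H$-invariant only modulo $\mu$-null sets, you should first replace it by the strictly invariant set $\bigcap_{h\in H}hA$ (which differs from $A$ by a null set, since $H$ is countable and acts nonsingularly), so that the assignment $gH\mapsto gA$ is literally well defined rather than well defined only up to null sets; you flag this issue, and it is handled exactly as you indicate.
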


\begin{proposition}\label{prop:kmgergodic}
For each $\beta\in (0,1]$, let $\tilde{\nu}_\beta$ be the unique quasi-invariant measure on $\mathbb{A}_S/\hat{R}_S^*$ that extends $\nu_\beta$ and satisfies the obvious analogue of \eqref{eqn:qimultiplicative} for the action of $K_{\m,\Gamma}$ on $\mathbb{A}_S/\hat{R}_S^*$. Then the action of $K_{\m,\Gamma}$ on $(\mathbb{R}_+^*\times\mathbb{A}_S/\hat{R}_S^*,\lambda_\infty\times \tilde{\nu}_\beta)$ given by
\begin{equation}\label{eqn:action2}
k(t,\bar{\mathbf{a}})=(N(k)^\beta t,k\bar{\mathbf{a}})\quad \text{for } k\in K_{\m,\Gamma}, (t,\bar{\mathbf{a}})\in\mathbb{R}_+^*\times\mathbb{A}_S/\hat{R}_S^*
\end{equation}
is ergodic.  
\end{proposition}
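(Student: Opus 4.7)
The strategy is to invoke Lemma~\ref{lem:general} with $G=\I_\m$, $H=i(K_{\m,\Gamma})$, $X=\mathbb{R}_+^*\times\mathbb{A}_S/\hat R_S^*$, and $\mu=\lambda_\infty\times\tilde{\nu}_\beta$. First I would extend the $K_{\m,\Gamma}$-action \eqref{eqn:action2} on $X$ to an $\I_\m$-action: since $R_{\m,\Gamma}^*\subseteq\hat R_S^*$ and units have trivial ideal norm, the given action factors through $K_{\m,\Gamma}/R_{\m,\Gamma}^*\cong i(K_{\m,\Gamma})$, and extends to $\I_\m$ via the canonical embedding $\I_\m\hookrightarrow\mathbb{A}_S^*/\hat R_S^*$ by the natural formula $\a\cdot(t,\bar{\mathbf{c}})=(N(\a)^\beta t,\a\bar{\mathbf{c}})$. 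Next, I would adapt Lemma~\ref{lem:1-1} to the $\sigma$-finite setting in order to build a quasi-invariant extension $\tilde{\mu}$ of $\mu$ to $\I_\m/i(K_{\m,\Gamma})\times X$: concretely, picking representatives $\a_\k\in\k$ with $\a_{[R]}=R$, one assigns mass $N(\a_\k)^\beta\mu(\a_\k\cdot W)$ to $\{\k\}\times W$, and a direct computation (mirroring that of Lemma~\ref{lem:1-1}) shows the resulting measure is $\I_\m$-invariant and restricts to $\mu$ over $\{[R]\}\times X$. By Lemma~\ref{lem:general} it now suffices to prove that the $\I_\m$-action on $(\I_\m/i(K_{\m,\Gamma})\times X,\tilde{\mu})$ is ergodic.

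For this, let $f\in L^\infty(\tilde{\mu})$ be $\I_\m$-invariant. The group $\mathbb{R}_+^*$ acts on the enlarged space by rescaling the $t$-variable, and this action commutes with $\I_\m$. Via Mellin decomposition it therefore suffices to show that for every $\xi\in\mathbb{R}$, any $L^2$ function $g$ on $\I_\m/i(K_{\m,\Gamma})\times\mathbb{A}_S/\hat R_S^*$ satisfying
\begin{equation*}
g(\a\cdot z)=N(\a)^{-i\beta\xi}g(z),\qquad \a\in\I_\m,
\end{equation*}
vanishes unless $\xi=0$. When $\xi=0$, such a $g$ is $\I_\m$-invariant, and since $\mathbb{A}_S/\hat R_S^*$ is the $\I_\m$-orbit of $\hat R_S/\hat R_S^*$, the function $g$ is determined by its restriction to $\I_\m/i(K_{\m,\Gamma})\times\hat R_S/\hat R_S^*$, on which only $\I_\m^+$ acts; Proposition~\ref{prop:ergodicity} then forces $g$ to be constant.

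The case $\xi\ne 0$ is the principal obstacle, and is exactly where Lemma~\ref{lem:asymptotics} is invoked. Were a nonzero such $g$ to exist, the modulus $|g|$ would be $\I_\m$-invariant and hence constant by the previous paragraph, so one may normalize $|g|\equiv 1$; the cocycle identity then reads $g(\a z)\overline{g(z)}=N(\a)^{-i\beta\xi}$. Specializing to $\k=[R]$, Lemma~\ref{lem:asymptotics} provides, for each $\lambda>1$ and $\epsilon>0$, sequences of primes $(\p_n),(\q_n)$ with $\q_n\p_n^{-1}=k_nR$ for some $k_n\in K_{\m,\Gamma}$, with $|N(k_n)^\beta-\lambda|<\epsilon$, and with $\sum_n N(\p_n)^{-\beta}=\infty$. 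A Borel--Cantelli-type recurrence argument built on this divergent series (in the spirit of \cite[Section~3]{LN:2011} and \cite[Section~2]{Nesh:2011}) produces, for almost every $z$, infinitely many $n$ such that $k_n z$ returns to a prescribed positive-measure set; on this set the cocycle identity yields $g(k_n z)\overline{g(z)}\to\lambda^{-i\xi}$, and letting $\lambda$ range over the dense set of values supplied by Lemma~\ref{lem:asymptotics} forces $g\equiv 0$ whenever $\xi\ne 0$. With all characters thus treated, the enlarged $\I_\m$-action is ergodic, and Lemma~\ref{lem:general} delivers the desired ergodicity of the $K_{\m,\Gamma}$-action on $(X,\mu)$.
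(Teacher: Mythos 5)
Your proposal is correct in substance and shares the paper's skeleton --- reduce via Lemma~\ref{lem:general} to an action of the full ideal group $\I_\m$ on $\I_\m/i(K_{\m,\Gamma})\times\mathbb{R}_+^*\times\mathbb{A}_S/\hat{R}_S^*$, then feed in Lemma~\ref{lem:asymptotics} --- but it carries out the final step in the dual, spectral language rather than the paper's. The paper realizes the enlarged action as the skew product $\R(c)$ of the orbit relation of $\I_\m\curvearrowright\mathbb{A}_S/\hat{R}_S^*$ by the $(\I_\m/i(K_{\m,\Gamma})\times\mathbb{R}_+^*)$-valued cocycle $\a\mapsto([\a],N(\a))$, and proves ergodicity by showing the asymptotic range $r^*(c)$ is everything, using the product-type cocycle machinery of \cite[Proposition~2.2]{LagNesh:2014}. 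You instead keep the finite group in the space, decompose only over $\mathbb{R}_+^*$, and rule out eigenfunctions $g(\a z)=N(\a)^{-i\beta\xi}g(z)$ directly; this is exactly the annihilator description of the essential range (characters killing $r^*(c)$ are those for which $\chi\circ c$ is a coboundary), so the two arguments are equivalent formulations of the same computation. Your version has the minor advantage that Lemma~\ref{lem:asymptotics} is only needed for the trivial class $\k=[R]$, the other classes being absorbed into the $\xi=0$ case via Proposition~\ref{prop:ergodicity}. Two points in your sketch deserve to be made explicit. First, the passage from ``ergodicity of the $\mathbb{R}_+^*$-extension'' to ``no nontrivial unimodular eigenfunctions'' is precisely the Feldman--Moore/Schmidt equivalence that the paper cites via \cite[Proposition~2.1(iii)]{LagNesh:2014}; it also requires knowing that the base action on $\I_\m/i(K_{\m,\Gamma})\times\mathbb{A}_S/\hat{R}_S^*$ is ergodic, which is your $\xi=0$ case, so the logical order matters. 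Second, the Borel--Cantelli recurrence is not a consequence of $\sum_nN(\p_n)^{-\beta}=\infty$ alone: you need the independence coming from the product structure $\nu_\beta=\prod_\p\nu_{\beta,\p}$ together with the fact that the $\p_n,\q_n$ are pairwise distinct primes, so that the events $\{v_{\p_n}(z)\geq1\}$ (each of probability $N(\p_n)^{-\beta}$) are independent and the second Borel--Cantelli lemma applies to cylinder sets on which $g$ is nearly constant. With those two points spelled out, your argument is a complete and legitimate alternative to the paper's.
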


\begin{remark}
If $\m_\infty$ is supported on all of the real embeddings of $K$ and $\m_0$ is trivial, so that $K_{\m,\Gamma}=K_+^*$ is the multiplicative subgroup of $K^*$ consisting of all (non-zero) totally positive elements, then Proposition~\ref{prop:kmgergodic} is precisely \cite[Corollary~3.2]{Nesh:2011}, which follows from Neshveyev's type computation for the high temperature KMS states on the Bost--Connes system associated with $K$, see \cite[Theorem~3.1]{Nesh:2011}.
\end{remark}

\begin{proof}[Proof of Proposition~\ref{prop:kmgergodic}]
Since the subgroup $R_{\m,\Gamma}^*$ acts trivially, the action of $K_{\m,\Gamma}$ defines an action of the quotient group $K_{\m,\Gamma}/R_{\m,\Gamma}^*$, and it suffices to show that the action of this quotient group is ergodic. 
Let $\lambda_{\m,\Gamma}$ denote the normalized Haar measure on $\I_\m/i(K_{\m,\Gamma})$. We can view $K_{\m,\Gamma}/R_{\m,\Gamma}^*$ as a subgroup of $\I_\m$; by Lemma~\ref{lem:general}, it is enough to prove that the action of $\I_\m$ on $(\I_\m/i(K_{\m,\Gamma})\times\mathbb{R}_+^*\times\mathbb{A}_S/\hat{R}_S^*,\lambda_{\m,\Gamma}\times\lambda_\infty\times \tilde{\nu}_\beta)$ given by
\[
\a([\b],t,\bar{\mathbf{a}})=([\a\b],N(\a)^\beta t,\a\bar{\mathbf{a}})\quad \text{for } \a\in \I_\m, ([\b],t,\bar{\mathbf{a}})\in \I_\m/i(K_{\m,\Gamma})\times\mathbb{R}_+^*\times\mathbb{A}_S/\hat{R}_S^*
\]
is ergodic.

Since the isomorphism $\mathbb{R}_+^*\to \mathbb{R}_+^*$ given $t\mapsto t^\beta$ preserves the measure class of $\lambda_\infty$, it suffices to show that the action of $\I_\m$ on $(\I_\m/i(K_{\m,\Gamma})\times\mathbb{R}_+^*\times\mathbb{A}_S/\hat{R}_S^*,\lambda_{\m,\Gamma}\times\lambda_\infty\times \tilde{\nu}_\beta)$ given by
\begin{equation}\label{eqn:action3}
\a([\b],t,\bar{\mathbf{a}})=([\a\b],N(\a) t,\a\bar{\mathbf{a}})\quad \text{for } \a\in \I_\m, ([\b],t,\bar{\mathbf{a}})\in \I_\m/i(K_{\m,\Gamma})\times\mathbb{R}_+^*\times\mathbb{A}_S/\hat{R}_S^*
\end{equation}
is ergodic.

Let $\R$ denote the orbit equivalence relation for the canonical action $\I_\m\curvearrowright (\mathbb{A}_S/\hat{R}_S^*,\tilde{\nu}_\beta)$ given by $\a:\bar{\mathbf{a}}\mapsto \a\bar{\mathbf{a}}$. This action is essentially free; indeed, the set
\[
\{\bar{\mathbf{a}}\in \mathbb{A}_S/\hat{R}_S^*: \text{ there exists }\p \text{ with } v_\p(\bar{\mathbf{a}})=\infty\}
\]
has $\tilde{\nu}_\beta$-measure zero by the scaling condition, and every point lying in the complement of this set has trivial isotropy. Thus, outside a set of measure zero we can define an ($\I_\m/i(K_{\m,\Gamma})\times\mathbb{R}_+^*$)-valued 1-cocycle $c$ on $\R$ by
\[
c(\bar{\mathbf{a}},\bar{\mathbf{b}})=([\a],N(\a)) \text{ if } \a\bar{\mathbf{a}}=\bar{\mathbf{b}}.
\]

Then the equivalence relation $\R(c)$ on $\I_\m/i(K_{\m,\Gamma})\times\mathbb{R}_+^*\times\mathbb{A}_S/\hat{R}_S^*$ associated with $c$ as in \cite[Section~8]{FM:1977} (see also \cite[Section~2]{LagNesh:2014}) coincides with the orbit equivalence relation for the action of $\I_\m$ on $\I_\m/i(K_{\m,\Gamma})\times\mathbb{R}_+^*\times\mathbb{A}_S/\hat{R}_S^*$ given by \eqref{eqn:action3}. Therefore, it suffices to show that $\R(c)$ is ergodic. It follows from Proposition~\ref{prop:ergodicity} and Lemma~\ref{lem:general} that $\R$ is ergodic. Hence, the results of \cite[Section~8]{FM:1977} imply that $\R(c)$ is ergodic if and only if the asymptotic range $r^*(c)$ of $c$ (\cite[Definition~8.2]{FM:1977}) coincides with $\I_\m/i(K_{\m,\Gamma})\times\mathbb{R}_+^*$, see \cite[Proposition~2.1(iii)]{LagNesh:2014}.

The proof that $r^*(c)=\I_\m/i(K_{\m,\Gamma})\times\mathbb{R}_+^*$ relies on \cite[Proposition~2.2]{LagNesh:2014} and follows the same lines as the computation of the analogous asymptotic range in the proof of \cite[Theorem~1.2]{LagNesh:2014} for number fields, but with Lemma~\ref{lem:asymptotics} used in place of \cite[Corollary~3.3]{LagNesh:2014}. 
We shall give a quick sketch of the argument here. The subset $\hat{R}_S/\hat{R}_S^*\subseteq \AA_S/\hat{R}_S^*$ is of $\tilde{\nu}_\beta$-measure one, and after removing a set of $\nu_\beta$-measure
zero, we can identify $\hat{R}_S/\hat{R}_S^*\cong\prod_{\p\in\P_K^\m}\p^{\mathbb{N}\cup\{\infty\}}$ with $\prod_{\p\in\P_K^\m}\p^{\mathbb{N}}$. The equivalence relation on $\prod_{\p\in\P_K^\m}\p^{\mathbb{N}}$ obtained by restricting $\mathcal{R}$ to $\hat{R}_S/\hat{R}_S^*$ is given by
\[
\bar{\mathbf{a}}\sim\bar{\mathbf{b}}\quad\text{if and only if } v_\p(\bar{\mathbf{a}})=v_\p(\bar{\mathbf{b}}) \text{ for all but finitely many }\p.
\]
Let $c_{\hat{R}_S/\hat{R}_S^*}$ be the restriction of $c$ to this equivalence relation; it suffices to show that $r^*(c_{\hat{R}_S/\hat{R}_S^*})$ coincides with $\I_\m/i(K_{\m,\Gamma})\times\mathbb{R}_+^*$.
The cocycle $c_{\hat{R}_S/\hat{R}_S^*}$ is of product type, as defined in \cite[Section~2]{LagNesh:2014}. Using \cite[Proposition~2.2(iii)]{LagNesh:2014}, it is enough to show that the asymptotic ratio set (see, for instance, \cite[Section~2]{LagNesh:2014}) of $c_{\hat{R}_S/\hat{R}_S^*}$ is equal to $\I_\m/i(K_{\m,\Gamma})\times\mathbb{R}_+^*$, which is done using Lemma~\ref{lem:asymptotics}.
\end{proof}

We are now ready for the proof of Theorem~\ref{thm:typeIII1}. 

\begin{proof}[Proof of Theorem~\ref{thm:typeIII1}]
Fix $\beta\in [1,2]$. In light of Lemma~\ref{lem:reducetogroupmeasure} and Proposition~\ref{prop:factor}, we only need to show that the factor $L^\infty(\Omega_K^\m,\tilde{\mu}_\beta)\rtimes G_{\m,\Gamma}$ is of type III$_1$. That is, we must show that the flow of weights on $L^\infty(\Omega_K^\m,\tilde{\mu}_\beta)\rtimes G_{\m,\Gamma}$ is trivial. Since 
\[
\frac{d(n,k)\tilde{\mu}_\beta}{d\tilde{\mu}_\beta}((n,k)w)=N(k)^\beta, \quad\text{for } (n,k)\in G_{\m,\Gamma}, w\in \Omega_K^\m,
\] 
this is equivalent to showing that the action of $G_{\m,\Gamma}$ on $(\mathbb{R}_+^*\times\Omega_K^\m,\lambda_\infty\times\tilde{\mu}_\beta)$ given by 
\begin{equation}
(n,k)(t,w)=(N(k)^\beta t,(n,k)w),\quad\text{for }(n,k)\in G_{\m,\Gamma}, w\in \Omega_K^\m
\end{equation}
is ergodic. We will now show that it suffices to prove that the action of $K_{\m,\Gamma}$ on $(\mathbb{R}_+^*\times\mathbb{A}_S/\hat{R}_S^*,\lambda_\infty\times \tilde{\nu}_{\beta-1})$ given by 
\begin{equation}
\label{eqn:ergodicreduce}
k(t,\bar{\mathbf{a}})=(N(k)^{\beta-1} t,k\bar{\mathbf{a}})\quad \text{for } k\in K_{\m,\Gamma}, (t,\bar{\mathbf{a}})\in\mathbb{R}_+^*\times\mathbb{A}_S/\hat{R}_S^*
\end{equation}
is ergodic. Our proof of this fact is a direct generalization of the special case considered in \cite[Theorem~3.2]{LN:2011}, but we include it for the convenience of the reader. Since $(\Omega_K^\m,\tilde{\mu}_\beta)$ is a quotient of $(\AA_S\times\AA_S/\hat{R}_S^*,\tilde{m}\times\tilde{\nu}_{\beta-1})$ where $\tilde{m}$ is the Haar measure on $\AA_S$ normalized such that $\tilde{m}(\hat{R}_S)=1$, it suffices to show that the action of $G_{\m,\Gamma}$ on $(\RR_+^*\times\AA_S\times\AA_S/\hat{R}_S^*,\lambda_\infty\times\tilde{m}\times\tilde{\nu}_{\beta-1})$ given by 
\begin{equation*}
(n,k)(t,\mathbf{b},\bar{\mathbf{a}})=(N(k)^\beta t,n+k\mathbf{b},k\bar{\mathbf{a}}),\quad\text{for }(n,k)\in G_{\m,\Gamma}, t\in\RR, \mathbf{b}\in\AA_S, \bar{\mathbf{a}}\in\AA_S/\hat{R}_S^*
\end{equation*}
is ergodic. Since $R_\m^{-1}R$ is dense in $\AA_S$ by the strong approximation theorem, the action of $R_\m^{-1}R$ on $(\AA_S,\tilde{m})$ by translation is ergodic. Thus, any $(R_\m^{-1}R\times\{1\})$-invariant measurable function on the product space $\RR_+^*\times\AA_S\times\AA_S/\hat{R}_S^*$ does not depend on the second coordinate. Hence, to prove that the above action is ergodic, it suffices to prove that the action given in Equation~\eqref{eqn:ergodicreduce} is ergodic.

For $\beta=1$, this follows since $\tilde{\nu}_0=\delta_{\bar{\mathbf{0}}}$ and $\{N(k) : k\in K_{\m,\Gamma}\}=\mathbb{Q}_+^*$, which is dense in $\mathbb{R}_+^*$, whereas for $\beta\in(1,2]$, this follows from Proposition~\ref{prop:kmgergodic}.
\end{proof}

\begin{remark}
Since the seminal work of Bost and Connes \cite{BC:1995}, there have been several operator algebraic constructions from number theory that lead to C*-dynamical systems exhibiting interesting phase transitions where the high temperature KMS states are factor states of type III$_1$. See, for example, \cite{BC:1995}, \cite{CMR:2005}, \cite{HL:1997}, \cite{BZ:2000}, \cite{Nesh:2002,Nesh:2011}, \cite{LN:2011}, and \cite{LNT:2013}.
We remark that in all cases, uniqueness of the high temperature KMS states boils down to that fact that certain $L$-functions do not have poles at $1$, and the crucial number-theoretic result needed to compute the type is a version of the prime number theorem. 
\end{remark}

\section{The boundary quotient}\label{sec:boundaryquotient}

By \cite[Theorem~7.1]{Bru:2019}, the C*-algebra $C_\lambda^*(R\rtimes R_{\m,\Gamma})$ has a unique maximal ideal $I_{\P_K^\m}$. The boundary quotient of $C_\lambda^*(R\rtimes R_{\m,\Gamma})$, as defined in \cite[Section~7]{Li2:2013} (see also \cite[Chapter~5.7]{CELY:2017}), is the quotient $C_\lambda^*(R\rtimes R_{\m,\Gamma})/I_{\P_K^\m}$.

Moreover, \cite[Theorem~7.1]{Bru:2019} gives an explicit description of the ideal $I_{\P_K^\m}$. We shall only need to know that $I_{\P_K^\m}$ corresponds to the subset of $\Omega_R^\m$ given by 
\[
\Omega_R^\m\setminus(\hat{R}_S\times\{\bar{\mathbf{0}}\})= \{[\mathbf{b},\bar{\mathbf{a}}] \in\Omega_R^\m: \text{ there exists } \p\in\P_K^\m \text{ with } v_\p(\bar{\mathbf{a}})<\infty\}.
\] 

Let $\rho: C_\lambda^*(R\rtimes R_{\m,\Gamma})\to C_\lambda^*(R\rtimes R_{\m,\Gamma})/I_{\P_K^\m}$ be the quotient map. For each $t\in\mathbb{R}$, the automorphism $\sigma_t$ leaves $I_{\P_K^\m}$ invariant, so $\sigma$ defines a time evolution $\bar{\sigma}$ on $C_\lambda^*(R\rtimes R_{\m,\Gamma})/I_{\P_K^\m}$ such that $\bar{\sigma}_t(\rho(\lambda_{(b,a)}))=N(a)^{it}\rho(\lambda_{(b,a)})$ for all $(b,a)\in R\rtimes R_{\m,\Gamma}$.

\begin{theorem}\label{thm:bqKMS}
The C*-dynamical system $(C_\lambda^*(R\rtimes R_{\m,\Gamma})/I_{\P_K^\m},\mathbb{R},\bar{\sigma})$ has a unique $\bar{\sigma}$-KMS$_1$ state $\bar{\phi}$, and there are no $\bar{\sigma}$-KMS$_\beta$ states for $\beta\neq 1$. 
Moreover, if $\pi_{\bar{\phi}}$ is the GNS representation of $\bar{\phi}$, then $\pi_{\bar{\phi}}(C_\lambda^*(R\rtimes R_{\m,\Gamma})/I_{\P_K^\m})''$ is isomorphic to the injective factor of type III$_1$ with separable predual, and $\bar{\phi}$ is determined by the values 
\begin{equation}\label{eqn:bqKMS}
\bar{\phi}(\rho(E_{(x+\a)\times (\a\cap R_{\m,\Gamma})}))=N(\a)^{-1}\quad\text{for all }x\in R \text{ and } \a\in\I_\m^+.
\end{equation}
\end{theorem}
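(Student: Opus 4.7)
Under the isomorphism of Proposition~\ref{prop:gpoidmodel}, the ideal $I_{\P_K^\m}$ corresponds to the open $G_{\m,\Gamma}$-invariant subset $U:=\Omega_R^\m\setminus(\hat{R}_S\times\{\bar{\mathbf{0}}\})$, so the quotient is canonically isomorphic to the reduction groupoid C*-algebra $C^*(G_{\m,\Gamma}\ltimes(\hat{R}_S\times\{\bar{\mathbf{0}}\}))$; identifying $\hat{R}_S\times\{\bar{\mathbf{0}}\}$ with $\hat{R}_S$, the reduction becomes the transformation groupoid for the usual $ax+b$-action $(n,k)\cdot\mathbf{b}=n+k\mathbf{b}$ (defined whenever the right-hand side lies in $\hat{R}_S$), and $\bar{\sigma}$ is conjugated to the time evolution coming from the cocycle $\log N(k)$. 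The measure $\mu_1=\pi_*(m\times\delta_{\bar{\mathbf{0}}})$ describing $\phi_1$ in Section~\ref{subsec:part(ii)} is concentrated on the complement of $U$, and a standard argument using the description of ideals in \'etale groupoid C*-algebras gives $\E(I_{\P_K^\m})\subseteq C_0(U)$; hence $\phi_1=\mu_1\circ\E$ vanishes on $I_{\P_K^\m}$ and descends to a state $\bar{\phi}$ on the quotient, whose values on the spanning projections are those in \eqref{eqn:bqKMS} by Theorem~\ref{thm:phasetransitions}(ii).

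To rule out other inverse temperatures and to prove uniqueness at $\beta=1$, I would work directly on $G_{\m,\Gamma}\ltimes\hat{R}_S$. Any $\bar{\sigma}$-KMS$_\beta$ state restricts to a probability measure $\bar{\mu}$ on $\hat{R}_S$ satisfying $\bar{\mu}((n,k)Z)=N(k)^{-\beta}\bar{\mu}(Z)$ whenever $(n,k)Z\subseteq\hat{R}_S$. Taking $Z=\hat{R}_S$ and any non-unit $k\in R_{\m,\Gamma}$, the cosets $n+k\hat{R}_S$ for $n\in R/kR$ partition $\hat{R}_S$ into $N(k)$ pieces, each of $\bar{\mu}$-measure $N(k)^{-\beta}$, so summing forces $N(k)^{1-\beta}=1$ and hence $\beta=1$; the case $\beta=\infty$ is ruled out since there are no $\bar{\sigma}$-KMS$_\beta$ states for any large finite $\beta$. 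At $\beta=1$ the same scaling applied to the sub-basic cosets $n+\a\hat{R}_S$ with $\a\in\I_\m^+$ uniquely pins $\bar{\mu}$ down to the normalized Haar measure $m$ on $\hat{R}_S$, and since each non-identity element of $G_{\m,\Gamma}$ has at most one fixed point in $\hat{R}_S$, the action is essentially free modulo $m$, so \cite[Theorem~1.3]{Nesh:2013} implies that $\bar{\phi}$ is the unique $\bar{\sigma}$-KMS$_1$ state.

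The type claim is then automatic: because $\phi_1=\bar{\phi}\circ\rho$, the GNS representation $\pi_{\phi_1}$ annihilates $I_{\P_K^\m}$ and factors as $\pi_{\phi_1}=\pi_{\bar{\phi}}\circ\rho$, giving
\[
\pi_{\bar{\phi}}(C_\lambda^*(R\rtimes R_{\m,\Gamma})/I_{\P_K^\m})''=\pi_{\phi_1}(C_\lambda^*(R\rtimes R_{\m,\Gamma}))'',
\]
which is the injective factor of type III$_1$ with separable predual by Theorem~\ref{thm:typeIII1}. The main substantive step is the groupoid identification in the first paragraph, and in particular the inclusion $\E(I_{\P_K^\m})\subseteq C_0(U)$, which follows by approximating elements of $I_{\P_K^\m}$ by functions supported on bisections of $G_{\m,\Gamma}\ltimes U$, whose restrictions to the unit space automatically land in $C_0(U)$.
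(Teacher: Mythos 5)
Your proposal is correct, and it reaches the uniqueness and non-existence statements by a genuinely different route than the paper. The paper's proof is ``top-down'': it takes the classification of quasi-invariant measures already established for Theorem~\ref{thm:phasetransitions} (the measures $\mu_\beta$ for $\beta\in[1,2]$ and the $\mu_{\beta,\k}$ for $\beta>2$), observes that a KMS state in the range of $\rho^*$ must restrict to a measure concentrated on $\hat{R}_S\times\{\bar{\mathbf{0}}\}$, and notes that this happens only for $\beta=1$ and $\mu=\mu_1$. You instead pass to the boundary groupoid $G_{\m,\Gamma}\ltimes\hat{R}_S$ and redo the measure analysis there from scratch: the partition of $\hat{R}_S$ into the $N(k)$ cosets $n+k\hat{R}_S$ forces $N(k)^{1-\beta}=1$ and hence $\beta=1$, and at $\beta=1$ the measure is pinned down to Haar measure. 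This buys something real: on the boundary, uniqueness of the quasi-invariant measure is elementary (translation invariance under the dense subgroup $R$ plus regularity), so your argument for the quotient does not lean on the $L$-function machinery of Theorem~\ref{thm:uniquemeasure}, whereas the paper's proof inherits it through Theorem~\ref{thm:phasetransitions}(ii). The price is that you must justify the groupoid identification $C_\lambda^*(R\rtimes R_{\m,\Gamma})/I_{\P_K^\m}\cong C^*(G_{\m,\Gamma}\ltimes\hat{R}_S)$ via the exact sequence for the open invariant set $U$ (fine here, since the groupoid is amenable), a step the paper sidesteps by never leaving the ambient algebra. One small point to tighten: for a non-principal $\a\in\I_\m^+$ the multiplicative scaling relation is not directly available, since only $k\in K_{\m,\Gamma}$ act; what pins down $\bar{\mu}(n+\a\hat{R}_S)=N(\a)^{-1}$ is the $R$-translation invariance applied to the $N(\a)$ mutually translated cosets partitioning $\hat{R}_S$. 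The descent of $\phi_1$ through $\rho$ (via $\E(I_{\P_K^\m})\subseteq C_0(U)$; the paper instead cites \cite[Corollary~8.14.4]{Ped:2018}) and the identification $\pi_{\bar{\phi}}(C_\lambda^*(R\rtimes R_{\m,\Gamma})/I_{\P_K^\m})''=\pi_{\phi_1}(C_\lambda^*(R\rtimes R_{\m,\Gamma}))''$ for the type claim are essentially the same as in the paper.
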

\begin{proof}
If $\phi$ is a $\bar{\sigma}$-KMS$_\beta$ on $C_\lambda^*(R\rtimes R_{\m,\Gamma})/I_{\P_K^\m}$, then the composition $\phi\circ\rho$ is a $\sigma$-KMS$_\beta$ state on $C_\lambda^*(R\rtimes R_{\m,\Gamma})$ that vanishes on $\ker\rho=I_{\P_K^\m}$. Moreover, the map $\rho^*:\phi\mapsto \phi\circ\rho$ is injective.
Suppose that a $\sigma$-KMS$_\beta$ state $\phi$ on $C_\lambda^*(R\rtimes R_{\m,\Gamma})$ belongs to the range of $\rho^*$, and let $\mu$ be the quasi-invariant probability measure on $\Omega_R^\m$ determined by $\phi\vert_{C(\Omega_R^\m)}$. 
From our analysis of quasi-invariant probability measures in Section~\ref{sec:equilibrium}, we know that either $\mu=\mu_\beta$ (in the case $\beta\in[1,2]$) or $\mu$ is a convex combination of the measures $\mu_{\beta,\k}$, $\k\in\I_\m/i(K_{\m,\Gamma})$ (in the case $\beta\in (2,\infty)$), where $\mu_\beta$ is defined in the proof of Theorem~\ref{thm:phasetransitions}(ii) and $\mu_{\beta,\k}$ is defined in the proof of Theorem~\ref{thm:phasetransitions}(iii).
Since $\phi\vert_{C(\Omega_R^\m)}$ vanishes on $I_{\P_K^\m}\cap C(\Omega_R^\m)=C_0(\Omega_R^\m\setminus (\hat{R}_S\times\{\bar{\mathbf{0}}\}))$, it follows that $\mu$ is concentrated on $\hat{R}_S\times\{\bar{\mathbf{0}}\}$. 
This happens only for $\beta=1$ and $\mu=\mu_1$, in which case $\phi=\phi_1$ is the unique $\sigma$-KMS$_1$ state on $C_\lambda^*(R\rtimes R_{\m,\Gamma})$ from Theorem~\ref{thm:phasetransitions}(ii). This implies that there are no $\bar{\sigma}$-KMS$_\beta$ states for $\beta\neq 1$.

A direct calculation using \cite[Corollary~8.14.4]{Ped:2018} shows that $\phi_1$ vanishes on the ideal $I_{\P_K^\m}$ and thus factors through $\rho$ to define a $\bar{\sigma}$-KMS$_1$ state on $C_\lambda^*(R\rtimes R_{\m,\Gamma})/I_{\P_K^\m}$; the uniqueness of $\bar{\sigma}$-KMS$_1$ states now follows from the injectivity of $\rho^*$.

Since $\pi_{\bar{\phi}}\circ\rho=\pi_{\phi_1}$ where $\pi_{\phi_1}$ is the GNS representation of $\phi_1$, it follows from  Theorem~\ref{thm:phasetransitions}(ii) that $\pi_{\bar{\phi}}(C_\lambda^*(R\rtimes R_{\m,\Gamma})/I_{\P_K^\m})''$ is isomorphic to the injective factor of type III$_1$ with separable predual.
\end{proof}

\begin{remark}
For the case of trivial $\m$ and $\Gamma$, the uniqueness claim in Theorem~\ref{thm:bqKMS} follows from \cite[Theorem~6.7]{CDL:2013}.
\end{remark}

\section{Phase transitions on C*-algebras of multiplicative monoids}\label{sec:multiplicativemonoid}

For each $a\in R_{\m,\Gamma}$, let $\lambda_a$ denote the isometry on $\ell^2(R_{\m,\Gamma})$ determined by $\lambda_a(\epsilon_x)=\epsilon_{ax}$ where $\{\epsilon_x : x\in R_{\m,\Gamma}\}$ is the canonical orthonormal basis for $\ell^2(R_{\m,\Gamma})$. Then the left regular C*-algebra of the (commutative) semigroup $R_{\m,\Gamma}$ is the sub-C*-algebra of $\mathcal{B}(\ell^2(R_{\m,\Gamma}))$ generated by these isometries, that is, 
\[
C_\lambda^*(R_{\m,\Gamma}):=C^*(\{\lambda_a : a\in R_{\m,\Gamma}\}).
\]

The C*-algebra $C_\lambda^*(R_{\m,\Gamma})$ also carries a canonical time evolution $\sigma^\times$ that is determined on the generating isometries by $\sigma_t^\times(\lambda_a)=N(a)^{it}\lambda_a$ for $a\in R_{\m,\Gamma}$. 

\begin{remark}
Using \cite[Proposition~3.9]{Bru:2019} and Li's theory of semigroup C*-algebras from \cite{Li1:2012,Li2:2013}, one can show that there is an injective *-homomorphism 
\[
C_\lambda^*(R_{\m,\Gamma})\hookrightarrow C_\lambda^*(R\rtimes R_{\m,\Gamma})
\] 
such that $\lambda_a\mapsto \lambda_{(0,a)}$ for all $a\in R_{\m,\Gamma}$. Hence, under this embedding, the time evolution $\sigma^\times$ coincides with the restriction of the time evolution $\sigma$ to (the image of) $C_\lambda^*(R_{\m,\Gamma})$.
\end{remark}

The (commutative) semigroup $R_{\m,\Gamma}/R_{\m,\Gamma}^*$ can be identified with the semigroup of principal ideals that are generated by an element from $R_{\m,\Gamma}$. For each $a\in R_{\m,\Gamma}$, let $\lambda_{aR_{\m,\Gamma}^*}$ denote the corresponding isometry in the left regular C*-algebra $C_\lambda^*(R_{\m,\Gamma}/R_{\m,\Gamma}^*)$; this C*-algebra also carries a canonical time evolution, which we also denote by $\sigma^\times$. It is determined by $\sigma^\times(\lambda_{aR_{\m,\Gamma}^*})=N(a)^{it}\lambda_{aR_{\m,\Gamma}^*}$ for $aR_{\m,\Gamma}^*\in R_{\m,\Gamma}/R_{\m,\Gamma}^*$.

In this section, we briefly explain how the techniques used to prove Theorem~\ref{thm:phasetransitions} also lead to phase transition theorems for the C*-dynamical systems 
\[
(C_\lambda^*(R_{\m,\Gamma}),\mathbb{R},\sigma^\times)\quad \text{and}\quad(C_\lambda^*(R_{\m,\Gamma}/R_{\m,\Gamma}^*),\mathbb{R},\sigma^\times).
\] 
Namely, we have the following two theorems, the first one for the left regular C*-algebra of a congruence monoid itself, and the second one for left regular C*-algebra of a semigroup of principal ideals that are generated by elements from a congruence monoid.

\begin{theorem}\label{thm:ptmultiplicative}
Let $K$ be a number field, $\m$ a modulus for $K$, and $\Gamma$ a subgroup of $(R/\m)^*$.
\begin{enumerate}[\upshape(i)]
		\item There are no $\sigma^\times$-KMS$_\beta$ states on $C_\lambda^*(R_{\m,\Gamma})$ for $\beta<0$.
		\item The simplex of $\sigma^\times$-KMS$_0$ states on $C_\lambda^*(R_{\m,\Gamma})$ is isomorphic to the simplex of $\sigma^\times$-invariant states on the commutative group C*-algebra $C^*(K_{\m,\Gamma})$.
		\item For each $\beta\in (0,1]$, the simplex of $\sigma^\times$-KMS$_\beta$ states on $C_\lambda^*(R_{\m,\Gamma})$ is isomorphic to the simplex of states on the commutative group C*-algebra $C^*(R_{\m,\Gamma}^*)$. Moreover, if $\psi_{\beta,\chi}$ is the extremal $\sigma^\times$-KMS$_\beta$ state corresponding to the character $\chi\in \widehat{R_{\m,\Gamma}^*}$ and $\pi_{\psi_{\beta,\chi}}$ is the GNS representation of $\psi_{\beta,\chi}$, then $\pi_{\psi_{\beta,\chi}}(C_\lambda^*(R_{\m,\Gamma}))''$ is isomorphic to the injective factor of type III$_1$ with separable predual.
		\item For each $\beta>1$, the simplex of $\sigma^\times$-KMS$_\beta$ states on $C_\lambda^*(R_{\m,\Gamma})$ is isomorphic to the simplex of states on the commutative C*-algebra
		\[
		\bigoplus_{\k\in\I_\m/i(K_{\m,\Gamma})}C^*(R_{\m,\Gamma}^*).
		\]
		\item The set of $\sigma^\times$-ground states on $C_\lambda^*(R_{\m,\Gamma})$ is isomorphic to the state space of the C*-algebra 
		\[
			\bigoplus_{\k\in \I_\m/i(K_{\m,\Gamma})} M_{k_\k}(C^*(R_{\m,\Gamma}^*))
		\]
		where $k_\k$ is the number of norm-minimizing ideals in the class $\k$.
\end{enumerate}
\end{theorem}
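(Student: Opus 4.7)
The strategy closely parallels the proof of Theorem~\ref{thm:phasetransitions}, with the groupoid model now given by an isomorphism $C_\lambda^*(R_{\m,\Gamma})\cong C^*(K_{\m,\Gamma}\ltimes \hat{R}_S/\hat{R}_S^*)$, where $K_{\m,\Gamma}$ acts partially on $\hat{R}_S/\hat{R}_S^*$ by multiplication. This can be verified by the same line of argument used in the proof of Proposition~\ref{prop:gpoidmodel}, invoking Li's theory applied to the commutative semigroup $R_{\m,\Gamma}$; the diagonal becomes $C(\hat{R}_S/\hat{R}_S^*)$, and the time evolution $\sigma^\times$ is conjugated to the one arising from the $\RR$-valued cocycle $(k,\bar{\mathbf{a}})\mapsto \log N(k)$. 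KMS$_\beta$ states then correspond, via \cite[Theorem~1.3]{Nesh:2013}, to pairs consisting of a probability measure $\nu$ on $\hat{R}_S/\hat{R}_S^*$ satisfying the multiplicative scaling condition \eqref{eqn:qimultiplicative} together with tracial data on the isotropy subgroups at $\nu$-almost every point.

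Part (i) is immediate from the KMS$_\beta$ identity $\phi(\lambda_a\lambda_a^*)=N(a)^{-\beta}$ applied to any $a\in R_{\m,\Gamma}$ with $N(a)>1$. For Part (ii), any $\sigma^\times$-invariant trace restricts to a $K_{\m,\Gamma}$-invariant probability measure on $\hat{R}_S/\hat{R}_S^*$, which by the $\beta=0$ case of Theorem~\ref{thm:uniquemeasure} must be $\delta_{\bar{\mathbf{0}}}$; since the isotropy at $\bar{\mathbf{0}}$ is all of $K_{\m,\Gamma}$, a straightforward adaptation of Neshveyev's disintegration argument (using that the tracial condition with $\sigma^\times$-invariance is the natural $\beta=0$ analogue of the KMS condition) produces the bijection with $\sigma^\times$-invariant states of $C^*(K_{\m,\Gamma})$. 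Parts (iii) and (iv) then follow at once from the measure classification already established in the proof of Theorem~\ref{thm:phasetransitions}: Theorem~\ref{thm:uniquemeasure} gives $\nu=\nu_\beta$ uniquely for $\beta\in(0,1]$, and Lemma~\ref{lem:qimeasuresforlargebeta} gives the simplex decomposition over $\I_\m/i(K_{\m,\Gamma})$ for $\beta>1$. In both cases $\nu_\beta$ and $\nu_{\beta,\k}$ assign zero mass to points $\bar{\mathbf{a}}$ with any $v_\p(\bar{\mathbf{a}})=\infty$, so the isotropy is $R_{\m,\Gamma}^*$ almost everywhere; \cite[Corollary~1.4]{Nesh:2013} then yields the claimed parameterizations by (tracial) states on $C^*(R_{\m,\Gamma}^*)$ and $\bigoplus_\k C^*(R_{\m,\Gamma}^*)$, which are commutative.

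For the type III$_1$ claim in Part (iii), I will follow the pattern of Section~\ref{sec:typeIII1}: reduce to showing that the factor $L^\infty(\AA_S/\hat{R}_S^*,\tilde{\nu}_\beta)\rtimes K_{\m,\Gamma}$ has trivial flow of weights, which amounts to ergodicity of the $K_{\m,\Gamma}$-action on $(\RR_+^*\times \AA_S/\hat{R}_S^*,\lambda_\infty\times \tilde{\nu}_\beta)$ scaled by $N(k)^\beta$ in the first coordinate. This is exactly Proposition~\ref{prop:kmgergodic}; together with essential freeness (checked as in Proposition~\ref{prop:factor}, but simpler here because there is no additive translation to analyze) and amenability of $K_{\m,\Gamma}$, this identifies the GNS von Neumann algebra with the unique injective type III$_1$ factor with separable predual. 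Part (v) is handled by the boundary groupoid approach of Section~\ref{subsec:ground}: the analogue of Proposition~\ref{prop:boundarygpoid} identifies the ground state space with the state space of $C^*(K_{\m,\Gamma,1}\ltimes (\hat{R}_S/\hat{R}_S^*)_0)$, and the argument determining $(\hat{R}_S/\hat{R}_S^*)_0$ runs exactly as before, producing precisely the norm-minimizing ideals from each class $\k$, each with isotropy $R_{\m,\Gamma}^*$. Since there are no shifts by elements of $R/\a$ in the multiplicative setting, the orbit of each class has size $k_\k$ rather than $k_\k\cdot N(\a_{\k,1})$, and \cite[Theorem~3.1]{MRW:1987} gives the matrix algebra $M_{k_\k}(C^*(R_{\m,\Gamma}^*))$ as claimed. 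The main obstacle is conceptual rather than technical: one must carefully formulate the correct Neshveyev-type bijection at $\beta=0$ for Part (ii), since ``KMS$_0$ state'' here means $\sigma^\times$-invariant trace and the standard statement of \cite[Theorem~1.3]{Nesh:2013} must be adapted accordingly.
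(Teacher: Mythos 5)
Your overall strategy coincides with the paper's for parts (i), (ii), (iv), (v) and for the parameterization half of part (iii): the same groupoid model $C_\lambda^*(R_{\m,\Gamma})\cong C^*(K_{\m,\Gamma}\ltimes\hat{R}_S/\hat{R}_S^*)$, the same reduction to the measure classification of Theorem~\ref{thm:uniquemeasure} and Lemma~\ref{lem:qimeasuresforlargebeta}, the same identification of the almost-everywhere isotropy as $R_{\m,\Gamma}^*$ (resp.\ $K_{\m,\Gamma}$ at $\bar{\mathbf{0}}$ for $\beta=0$), and the same boundary-groupoid argument for the ground states. These portions are fine.

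There is, however, a genuine gap in your type III$_1$ argument for part (iii). You propose to identify $\pi_{\psi_{\beta,\chi}}(C_\lambda^*(R_{\m,\Gamma}))''$ with (a corner of) $L^\infty(\AA_S/\hat{R}_S^*,\tilde{\nu}_\beta)\rtimes K_{\m,\Gamma}$ and assert that essential freeness is ``simpler here because there is no additive translation to analyze.'' This is exactly backwards: the absence of the additive part is what \emph{destroys} essential freeness. Since $R_{\m,\Gamma}^*\subseteq\hat{R}_S^*$, every element of the unit group fixes every point of $\AA_S/\hat{R}_S^*$, so the action of $K_{\m,\Gamma}$ is never essentially free unless $R_{\m,\Gamma}^*$ is trivial. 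In particular the unitaries $u_g$, $g\in R_{\m,\Gamma}^*$, lie in the center of $L^\infty(\AA_S/\hat{R}_S^*,\tilde{\nu}_\beta)\rtimes K_{\m,\Gamma}$, which is therefore not a factor, and it cannot be the GNS von Neumann algebra of the factor state $\psi_{\beta,\chi}$. (This non-freeness is also precisely why the KMS$_\beta$ states for a fixed measure are parameterized by the characters of $R_{\m,\Gamma}^*$ rather than being unique.) The paper circumvents this by extending $\chi$ to a character $\tilde{\chi}$ of $K_{\m,\Gamma}$ and constructing the twisting $*$-homomorphism $\Psi_{\tilde{\chi}}$ into $L^\infty(\AA_S/\hat{R}_S^*,\tilde{\nu}_\beta)\rtimes(K_{\m,\Gamma}/R_{\m,\Gamma}^*)$, the crossed product by the \emph{quotient} group, which does act essentially freely; one checks $\psi_{\beta,\chi}=\varphi\circ\Psi_{\tilde{\chi}}$ and that the image is weakly dense in the relevant corner, and only then applies the flow-of-weights computation of Proposition~\ref{prop:kmgergodic} (which, note, is stated for an action that factors through $K_{\m,\Gamma}/R_{\m,\Gamma}^*$). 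Your appeal to Proposition~\ref{prop:kmgergodic} is the right ergodicity input, but without the passage to the quotient group via $\Psi_{\tilde{\chi}}$ the identification of the GNS factor, and hence the type computation, does not go through.
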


\begin{theorem}\label{thm:ptmultiplicative2}
Let $K$ be a number field, $\m$ a modulus for $K$, and $\Gamma$ a subgroup of $(R/\m)^*$.
\begin{enumerate}[\upshape(i)]
		\item There are no $\sigma^\times$-KMS$_\beta$ states on $C_\lambda^*(R_{\m,\Gamma}/R_{\m,\Gamma}^*)$ for $\beta<0$.
		\item The simplex of $\sigma^\times$-KMS$_0$ states on $C_\lambda^*(R_{\m,\Gamma})$ is isomorphic to the simplex of $\sigma^\times$-invariant states on the commutative group C*-algebra $C^*(K_{\m,\Gamma}/R_{\m,\Gamma}^*)$.
		\item For each $\beta\in (0,1]$, there is a unique $\sigma^\times$-KMS$_\beta$ state $\omega_\beta$ on $C_\lambda^*(R_{\m,\Gamma}/R_{\m,\Gamma}^*)$. Moreover, if $\pi_{\omega_\beta}$ is the GNS representation of $\omega_\beta$, then $\pi_{\omega_\beta}(C_\lambda^*(R_{\m,\Gamma}/R_{\m,\Gamma}^*))''$ is isomorphic to the injective factor of type III$_1$ with separable predual.
		\item For each $\beta>1$, the simplex of $\sigma^\times$-KMS$_\beta$ states on $C_\lambda^*(R_{\m,\Gamma}/R_{\m,\Gamma}^*)$ is isomorphic to the simplex of states on the finite-dimensional commutative C*-algebra $\mathbb{C}^{h_{\m,\Gamma}}$ where $h_{\m,\Gamma}:=|\I_\m/i(K_{\m,\Gamma})|$.
		\item The set of $\sigma^\times$-ground states on $C_\lambda^*(R_{\m,\Gamma})$ is isomorphic to the state space of the C*-algebra 
		\[
			\bigoplus_{\k\in \I_\m/i(K_{\m,\Gamma})} M_{k_\k}(\mathbb{C})
		\]
		where $k_\k$ is the number of norm-minimizing ideals in the class $\k$.
\end{enumerate}
\end{theorem}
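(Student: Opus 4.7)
The plan is to reduce Theorem~\ref{thm:ptmultiplicative2} to the measure-theoretic and ergodic results already developed for the additive-multiplicative case. First, I would establish a groupoid model
\[
C_\lambda^*(R_{\m,\Gamma}/R_{\m,\Gamma}^*) \cong C^*\bigl((K_{\m,\Gamma}/R_{\m,\Gamma}^*) \ltimes \hat{R}_S/\hat{R}_S^*\bigr)
\]
using Li's theory of semigroup C*-algebras, arguing as in Proposition~\ref{prop:gpoidmodel} but with the additive coordinate removed; the diagonal sub-C*-algebra is the canonical copy of $C(\hat{R}_S/\hat{R}_S^*)$. Under this isomorphism the time evolution $\sigma^\times$ corresponds to the cocycle $(k,\bar{\mathbf{a}})\mapsto \log N(k)$, and any $\sigma^\times$-KMS$_\beta$ state restricts to a quasi-invariant probability measure on $\hat{R}_S/\hat{R}_S^*$ satisfying the scaling condition \eqref{eqn:qimultiplicative}. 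The crucial observation is that the only point of $\hat{R}_S/\hat{R}_S^*$ with non-trivial isotropy under the partial action of $K_{\m,\Gamma}/R_{\m,\Gamma}^*$ is $\bar{\mathbf{0}}$, whose isotropy is the whole group; this is precisely what produces the new phase transition at $\beta=0$.

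Parts (i), (ii), and (iv) then follow directly. Part~(i) is immediate from $\phi(\lambda_{aR_{\m,\Gamma}^*}\lambda_{aR_{\m,\Gamma}^*}^*)=N(a)^{-\beta}\leq 1$ for any KMS$_\beta$ state. For~(ii), any tracial state forces $\mu(aR\hat{R}_S/\hat{R}_S^*)=1$ for every $a\in R_{\m,\Gamma}$, so $\mu=\delta_{\bar{\mathbf{0}}}$ and the trace is determined by its values on the isotropy C*-algebra $C^*(K_{\m,\Gamma}/R_{\m,\Gamma}^*)$; the $\sigma^\times$-invariance condition transfers to $\sigma^\times$-invariance of the state on $C^*(K_{\m,\Gamma}/R_{\m,\Gamma}^*)$, yielding the claimed bijection. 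For~(iv), Lemma~\ref{lem:qimeasuresforlargebeta} shows that quasi-invariant probability measures for $\beta>1$ are convex combinations of the $\nu_{\beta,\k}$, each supported on $\k\cap\I_\m^+$ where every point has trivial isotropy; \cite[Theorem~1.3]{Nesh:2013} then produces the simplex of states on $\mathbb{C}^{h_{\m,\Gamma}}$.

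For~(iii), Theorem~\ref{thm:uniquemeasure} provides the unique quasi-invariant measure $\nu_\beta$ on $\hat{R}_S/\hat{R}_S^*$ for $\beta\in(0,1]$; since $\nu_\beta(\{\bar{\mathbf{0}}\})=0$ and every other point has trivial isotropy, \cite[Theorem~1.3]{Nesh:2013} yields a unique KMS$_\beta$ state $\omega_\beta$. For the type classification, I would mirror the strategy of Section~\ref{sec:typeIII1}: identify $\pi_{\omega_\beta}(C_\lambda^*(R_{\m,\Gamma}/R_{\m,\Gamma}^*))''$ with a corner of the crossed product $L^\infty(\AA_S/\hat{R}_S^*,\tilde{\nu}_\beta)\rtimes K_{\m,\Gamma}/R_{\m,\Gamma}^*$, which is injective by amenability and a factor by uniqueness of $\omega_\beta$, and then reduce triviality of the flow of weights to ergodicity of the action $k\cdot(t,\bar{\mathbf{a}})=(N(k)^\beta t, k\bar{\mathbf{a}})$ on $\mathbb{R}_+^*\times\AA_S/\hat{R}_S^*$. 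Because $R_{\m,\Gamma}^*$ acts trivially on this space, the required ergodicity is exactly the content of Proposition~\ref{prop:kmgergodic}.

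Finally, for~(v), I would apply \cite[Theorem~1.9]{LLN:2019} in direct analogy with Proposition~\ref{prop:boundarygpoid} to identify $\sigma^\times$-ground states with states on the C*-algebra of the boundary groupoid, and then repeat the analysis in the proof of Theorem~\ref{thm:phasetransitions}(iv) with the additive part removed: the boundary unit space is the finite disjoint union, over $\k\in\I_\m/i(K_{\m,\Gamma})$, of the $k_\k$ norm-minimizing ideals in $\k\cap\I_\m^+$, and each such set is a single transitive orbit with trivial isotropy in the norm-one subgroup of $K_{\m,\Gamma}/R_{\m,\Gamma}^*$, yielding $\bigoplus_\k M_{k_\k}(\mathbb{C})$. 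The main obstacle is largely bookkeeping: one must verify that the diagonal spectrum calculation from \cite[Proposition~3.3]{Bru:2019} and the proof of Proposition~\ref{prop:gpoidmodel} descend correctly to the commutative monoid $R_{\m,\Gamma}/R_{\m,\Gamma}^*$ without the additive coordinate. The measure-theoretic, ergodic, and number-theoretic inputs used in the heart of the argument are already available in full generality from Sections~\ref{sec:equilibrium} and~\ref{sec:typeIII1}.
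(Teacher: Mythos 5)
Your proposal is correct and follows essentially the same route as the paper, which in fact only writes out the proof of Theorem~\ref{thm:ptmultiplicative} and declares the argument for Theorem~\ref{thm:ptmultiplicative2} ``almost identical'': a groupoid model over $\hat{R}_S/\hat{R}_S^*$, the classification of quasi-invariant measures via Theorem~\ref{thm:uniquemeasure} and Lemma~\ref{lem:qimeasuresforlargebeta}, Neshveyev's parameterization for the states, Proposition~\ref{prop:kmgergodic} for the type III$_1$ claim, and the boundary groupoid of \cite{LLN:2019} for the ground states. One small inaccuracy worth noting: $\bar{\mathbf{0}}$ is not the only point with non-trivial isotropy under $K_{\m,\Gamma}/R_{\m,\Gamma}^*$ (any $\bar{\mathbf{a}}$ with $v_\p(\bar{\mathbf{a}})=\infty$ at some prime $\p$ is fixed by the class of a generator of $\p^{f_\p}$), but since all such points form a null set for every quasi-invariant measure with $\beta>0$ and the $\beta=0$ measure is $\delta_{\bar{\mathbf{0}}}$, this does not affect any step of your argument.
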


\begin{remark}\label{rmk:ptmultiplicative}
\begin{itemize}
\item[(a)] For the special case of trivial $\m$ and $\Gamma$, the parameterization results in Theorem~\ref{thm:ptmultiplicative}(i)-(iv) were already asserted in \cite[Remark~7.5]{CDL:2013}. 
\item[(b)] An alternative approach to computing the $\sigma^\times$-KMS$_\beta$ states on $C_\lambda^*(R^\times)$ and $C_\lambda^*(R^\times/R^*)$ for $\beta>1$ is given in \cite[Remark~6.6.5]{CELY:2017}. Presumably, the approach taken there could also be used  to compute the low temperature KMS states on $C_\lambda^*(R_{\m,\Gamma})$ and $C_\lambda^*(R_{\m,\Gamma}/R_{\m,\Gamma}^*)$.
\item[(c)] Using the canonical isomorphisms $C^*(K_{\m,\Gamma})\cong C(\widehat{K_{\m,\Gamma}})$ and $C^*(R_{\m,\Gamma}^*)\cong C(\widehat{R_{\m,\Gamma}^*})$ given by the Fourier transform, the parameterizations in Theorem~\ref{thm:ptmultiplicative}(iii)\&(iv) can be phrased in terms of characters of the discrete abelian groups $K_{\m,\Gamma}$ and $R_{\m,\Gamma}^*$.
Specifically, 
\begin{itemize}
\item for each $\beta\in (0,1]$, the extremal $\sigma^\times$-KMS$_\beta$ states on $C_\lambda^*(R_{\m,\Gamma})$ are parameterized by the characters of the discrete abelian group $R_{\m,\Gamma}^*$;
\item for each $\beta>1$, the extremal $\sigma^\times$-KMS$_\beta$ states on $C_\lambda^*(R_{\m,\Gamma})$ are parameterized by pairs $(\k,\chi)$ where $\k$ is a class in $\I_\m/i(K_{\m,\Gamma})$ and $\chi$ is character of $R_{\m,\Gamma}^*$.
\end{itemize}
\item[(d)] An analogues statement involving characters of the discrete abelian group $K_{\m,\Gamma}/R_{\m,\Gamma}^*$ holds for the parameterization given by Theorem~\ref{thm:ptmultiplicative2}(ii).
\end{itemize}
\end{remark}

The arguments needed to prove these theorems are almost identical, so we will only give a proof of Theorem~\ref{thm:ptmultiplicative}. 

\begin{proof}[Proof of Theorem~\ref{thm:ptmultiplicative}]
The strategy is similar to that used to prove Theorem~\ref{thm:phasetransitions}, so we will only give a sketch of the arguments.
There is a canonical action of the group $K_{\m,\Gamma}$ on $\mathbb{A}_S/\hat{R}_S^*$, and the C*-algebra of the reduction groupoid 
\[
K_{\m,\Gamma}\ltimes \hat{R}_S/\hat{R}_S^*=\{(k,\bar{\mathbf{a}})\in K_{\m,\Gamma}\times\hat{R}_S/\hat{R}_S^* :k\bar{\mathbf{a}}\in\hat{R}_S/\hat{R}_S^*\}\subseteq K_{\m,\Gamma}\ltimes \mathbb{A}_S/\hat{R}_S^*
\]
carries a canonical time evolution, which we also denote by $\sigma^\times$, determined by the real-valued 1-cocycle $c^\times: K_{\m,\Gamma}\ltimes \hat{R}_S/\hat{R}_S^*\to \mathbb{R}_+^*$ given by $(k,\bar{\mathbf{a}})\mapsto N(k)$. 
Arguments analogous to those given in \cite[Section~5]{Bru:2019} show that the C*-algebra $C_\lambda^*(R_{\m,\Gamma})$ can be canonically and $\mathbb{R}$-equivariantly identified with the groupoid C*-algebra $C^*(K_{\m,\Gamma}\ltimes \hat{R}_S/\hat{R}_S^*)$.
Hence, it suffices to compute all KMS and ground states of the C*-dynamical system $(C^*(K_{\m,\Gamma}\ltimes \hat{R}_S/\hat{R}_S^*),\mathbb{R},\sigma^\times)$.

A short calculation similar to that from the proof of Theorem~\ref{thm:phasetransitions}(i) shows that assertion (i) holds. 

For $\beta\in [0,1]$, Theorem~\ref{thm:uniquemeasure} asserts that the measure $\nu_\beta$ defined in Section~\ref{subsec:part(ii)} is the unique probability measure on $\hat{R}_S/\hat{R}_S^*$ that satisfies
\begin{equation*}\label{eqn:qimult}
\nu(kZ)=N(k)^{-\beta}\nu(Z) 
\end{equation*}
for all $k\in K_{\m,\Gamma}$ and Borel sets $Z\subseteq\hat{R}_S/\hat{R}_S^*$ such that $kZ\subseteq\hat{R}_S/\hat{R}_S^*$. For $\beta=0$, we have $\nu_\beta=\delta_{\bar{\mathbf{0}}}$, and the isotropy group of the point $\bar{\mathbf{0}}$ is all of $K_{\m,\Gamma}$. Since a state $\tau$ of $C^*(K_{\m,\Gamma})$ is $\sigma^\times$-invariant if and only if $\tau(u_k)=0$ for all $k\in K_{\m,\Gamma}$ with $N(k)\neq 1$, assertion (ii) follows from \cite[Theorem~1.3]{Nesh:2013} and \cite[Corollary~1.4]{Nesh:2013}. 

Now suppose $\beta\in(0,1]$. Then the measure $\nu_\beta$ is concentrated in the set 
\[
A:=\{\bar{\mathbf{a}}\in\hat{R}_S/\hat{R}_S^* : \bar{\mathbf{a}}_\p\neq 0\text{ for all }\p\}.
\]
Since the isotropy group of any point in this set is equal to $R_{\m,\Gamma}^*$, the parameterization result asserted in (iii) follows from \cite[Theorem~1.3]{Nesh:2013} (an argument similar to that used in the proof of Theorem~\ref{thm:phasetransitions}(iii) is needed to verify that the given parameterization is an isomorphism of simplexes).

The state $\psi_{\beta,\chi}$ corresponding to the character $\chi\in\widehat{R_{\m,\Gamma}^*}$ is given explicitly by
\begin{equation}
\label{eqn:multKMSformula}
\psi_{\beta,\chi}(f)=\int_A\sum_{g\in R_{\m,\Gamma}^*}\chi(g)f(g,\bar{\mathbf{a}})\;d\nu_\beta(\bar{\mathbf{a}}) \quad\text{ for } f\in C_c(K_{\m,\Gamma}\ltimes \hat{R}_S/\hat{R}_S^*).
\end{equation}

(Note that this explicit formula is not given in the statement of \cite[Theorem~1.3]{Nesh:2013}, but is given in its proof, which uses \cite[Theorem~1.1]{Nesh:2013}.)
Inspired by \cite[Proposition~5.2]{LLNSW:2015}, which came from an idea of Neshveyev \cite[Remark~2.5]{Nesh:2013}, we shall now describe the von Neumann algebra $\pi_{\psi_{\beta,\chi}}(C^*(K_{\m,\Gamma}\ltimes\hat{R}_S/\hat{R}_S^*))''$ generated by the GNS representation $\pi_{\psi_{\beta,\chi}}$ of $\psi_{\beta,\chi}$. 
Let $\chi\in\widehat{R_{\m,\Gamma}^*}$, and choose an extension $\tilde{\chi}$ of $\chi$ to $K_{\m,\Gamma}$. There is a *-homomorphism 
\[
\tilde{\Psi}_{\tilde{\chi}}:C_0(\mathbb{A}_S/\hat{R}_S^*)\rtimes K_{\m,\Gamma}\to L^\infty(\AA_S/\hat{R}_S^*,\tilde{\nu}_\beta)\rtimes (K_{\m,\Gamma}/R_{\m,\Gamma}^*)
\]
such that $\tilde{\Psi}_{\tilde{\chi}}(fu_k)=\tilde{\chi}(k)fu_{\bar{k}}$ for all $f\in C_0(\AA_S/\hat{R}_S^*)$ and $k\in K_{\m,\Gamma}$, where $\bar{k}$ denotes the image of $k$ under the quotient map $K_{\m,\Gamma}\to K_{\m,\Gamma}/R_{\m,\Gamma}^*$ and $\tilde{\nu}_\beta$ is the measure on $\mathbb{A}_S/\hat{R}_S^*$ from the statement of Proposition~\ref{prop:kmgergodic}. 

Let $\Psi_{\tilde{\chi}}$ denote the composition
\[
C^*(K_{\m,\Gamma}\ltimes\hat{R}_S/\hat{R}_S^*)\cong 1_{\hat{R}_S/\hat{R}_S^*}(C_0(\mathbb{A}_S/\hat{R}_S^*)\rtimes K_{\m,\Gamma})1_{\hat{R}_S/\hat{R}_S^*}\to L^\infty(\AA_S/\hat{R}_S^*,\tilde{\nu}_\beta)\rtimes (K_{\m,\Gamma}/R_{\m,\Gamma}^*)
\]
where the second arrow is the restriction of $\tilde{\Psi}_{\tilde{\chi}}$ to the (full) corner $1_{\hat{R}_S/\hat{R}_S^*}(C_0(\mathbb{A}_S/\hat{R}_S^*)\rtimes K_{\m,\Gamma})1_{\hat{R}_S/\hat{R}_S^*}$. 
A calculation using the explicit formula for $\psi_{\beta,\chi}$ given in Equation~\eqref{eqn:multKMSformula} shows that $\psi_{\beta,\chi}=\varphi\circ\Psi_{\tilde{\chi}}$ where $\varphi$ is the canonical normal state on $1_{\hat{R}_S/\hat{R}_S^*}(L^\infty(\AA_S/\hat{R}_S^*,\tilde{\nu}_\beta)\rtimes (K_{\m,\Gamma}/R_{\m,\Gamma}^*)1_{\hat{R}_S/\hat{R}_S^*}$ determined by $\nu_\beta$. Since the image of $\Psi_{\tilde{\chi}}$ is strong operator dense in the corner $1_{\hat{R}_S/\hat{R}_S^*}(L^\infty(\AA_S/\hat{R}_S^*,\tilde{\nu}_\beta)\rtimes (K_{\m,\Gamma}/R_{\m,\Gamma}^*)1_{\hat{R}_S/\hat{R}_S^*}$, we get a (non-canonical) isomorphism
\[
\pi_{\psi_{\beta,\chi}}(C^*(K_{\m,\Gamma}\ltimes\hat{R}_S/\hat{R}_S^*))''\cong 1_{\hat{R}_S/\hat{R}_S^*}(L^\infty(\mathbb{A}_S/\hat{R}_S^*,\tilde{\nu}_\beta)\rtimes K_{\m,\Gamma}/R_{\m,\Gamma}^*)1_{\hat{R}_S/\hat{R}_S^*}
\]
The assertion about injectivity and separability is easy to see, and factoriality follows from extremality of $\psi_{\beta,\chi}$ by \cite[Theorem~5.3.30(3)]{BR:1997}.
To prove our assertion about type, it suffices to show that the flow of weights on $L^\infty(\mathbb{A}_S/\hat{R}_S^*,\tilde{\nu}_\beta)\rtimes (K_{\m,\Gamma}/R_{\m,\Gamma}^*)$ is trivial, and for this it is enough to show that the action of $K_{\m,\Gamma}/R_{\m,\Gamma}^*$ on 
\[
(\mathbb{R}_+^*\times\mathbb{A}_S/\hat{R}_S^*,\lambda_\infty\times \tilde{\nu}_\beta)
\] 
given by
\[
\bar{k}(t,\bar{\mathbf{a}})=(N(k)^\beta t,k\bar{\mathbf{a}})\quad \text{for } \bar{k}\in K_{\m,\Gamma}/R_{\m,\Gamma}^*, (t,\bar{\mathbf{a}})\in\mathbb{R}_+^*\times\mathbb{A}_S/\hat{R}_S^*
\]
is ergodic. This follows from Proposition~\ref{prop:kmgergodic}.

For $\beta\in (1,\infty)$, Lemma~\ref{lem:qimeasuresforlargebeta} says that the extremal probability measures that satisfy \eqref{eqn:qimultiplicative} are precisely the measures $\{\nu_{\beta,\k} : \k\in \I_\m/i(K_{\m,\Gamma})\}$. These measures are concentrated in the set 
\[
\{\bar{\mathbf{a}} : \bar{\mathbf{a}}_\p\neq 0 \text{ for all }\p \}.
\]
Since the isotropy group of any point in this set is $R_{\m,\Gamma}^*$, the parameterization stated in Theorem~\ref{thm:ptmultiplicative}(iv) also follows from \cite[Theorem~1.3]{Nesh:2013}, and arguing as in Theorem~\ref{thm:phasetransitions}(iii), one shows that this parameterization is an isomorphism of simplexes.

Following the proof of Theorem~\ref{thm:phasetransitions}(iv), we see that the boundary set of the cocycle $c^\times$ (cf. \cite[Section~1]{LLN:2019}) is equal to 
\[
(\hat{R}_S/\hat{R}_S^* )_0:=\{\bar{\mathbf{a}}\in\hat{R}_S/\hat{R}_S^* : \bar{\mathbf{a}}=\a_{\k,j} \text{ for some } 1\leq j\leq k_\k\}
\]
where $\a_{\k,1},...,\a_{\k,k_\k}$ are the norm-minimizing ideals in the class $\k$ (see the discussion preceding Theorem~\ref{thm:phasetransitions}). Let $K_{\m,\Gamma,1}:=\{x\in K_{\m,\Gamma} : N(x)=1\}$. 
Then \cite[Theorem~1.9]{LLN:2009}  asserts that the map $\psi\mapsto \phi_\psi$ defined by 
\[
\phi_\psi(f)=\psi(f\vert_{K_{\m,\Gamma,1}\ltimes (\hat{R}_S/\hat{R}_S^* )_0})\quad \text{for } f\in C_c(K_{\m,\Gamma}\ltimes \hat{R}_S/\hat{R}_S^* )
\]
is an affine isomorphism of the state space of $C^*(K_{\m,\Gamma,1}\ltimes (\hat{R}_S/\hat{R}_S^* )_0)$ onto the $\sigma$-ground state space of $C^*(K_{\m,\Gamma}\ltimes \hat{R}_S/\hat{R}_S^* )$ where $K_{\m,\Gamma,1}\ltimes (\hat{R}_S/\hat{R}_S^*)_0$ is the reduction groupoid of $K_{\m,\Gamma,1}\ltimes \mathbb{A}_S/\hat{R}_S^*$ with respect to the subset $(\hat{R}_S/\hat{R}_S^* )_0\subseteq \mathbb{A}_S/\hat{R}_S^*$. 
Now arguments similar to those used to prove Theorem~\ref{thm:phasetransitions}(iv) show that
\[
C^*(K_{\m,\Gamma,1}\ltimes (\hat{R}_S/\hat{R}_S^* )_0)\cong \bigoplus_{\k\in \I_\m/i(K_{\m,\Gamma})} M_{k_\k}(C^*(R_{\m,\Gamma}^*)),
\]
which finishes the proof of Theorem~\ref{thm:ptmultiplicative}(v). 
\end{proof}

\end{document}